\def\XXint#1#2#3{{\setbox0=\hbox{$#1{#2#3}{\int}$ }
\vcenter{\hbox{$#2#3$ }}\kern-.6\wd0}}
\newcommand*{\rom}[1]{\expandafter\@slowromancap\romannumeral #1@}
\newcommand{\ind}{\protect\raisebox{2pt}{$\chi$}}
\newcommand{\disp}{\operatorname{Error}}
\newcommand{\prim}{\mathrm{prim}}
\newcommand{\SL}{\mathrm{SL}}
\newcommand{\sspan}{\operatorname{span}}
\newcommand{\GL}{\mathrm{GL}}
\newcommand{\M}{\mathrm{M}}
\newcommand{\X}{\mathcal{X}}
\newcommand{\XA}{\mathcal{X}^{\mathbb{A}}}
\newcommand{\Y}{\mathcal{Y}}
\newcommand{\E}{\mathcal{E}}
\newcommand{\R}{\mathbb{R}}
\newcommand{\Q}{\mathbb{Q}}
\newcommand{\T}{\mathbb{T}}
\newcommand{\B}{\mathcal{B}}
\newcommand{\e}{\varepsilon}
\newcommand{\Ortho}{\mathrm{O}}
\newcommand{\A}{\mathbb{A}}
\newcommand{\Z}{\mathbb{Z}}
\newcommand{\hZ}{\widehat{\mathbb{Z}}}
\newcommand{\hZp}{\widehat{\mathbb{Z}}_{\prim}}
\newcommand{\Sphere}{\mathbb{S}}
\newcommand{\N}{\mathbb{N}}
\newcommand{\Mat}{\M_{m \times n}(\R)}
\newcommand{\sed}{\mathcal{S}_{\epsilon}}
\newcommand{\ssed}{{\mathcal{S}^\sharp_{\epsilon}}}
\newcommand{\ued}{{\mathcal{U}_{\epsilon}}}
\newcommand{\ueda}{{\Tilde{\mathcal{U}}_{\epsilon}}}
\newcommand{\seda}{\Tilde{\mathcal{S}}_{\epsilon}}
\newcommand{\sseda}{{\Tilde{\mathcal{S}}^\sharp_{\epsilon}}}
\newcommand{\Le}{L_{\epsilon}}
\newcommand{\tmu}{\tilde{\mu}}
\newcommand{\tnu}{\tilde{\nu}}
\newcommand{\bfe}{\mathbf{e}}
\newcommand{\cN}{\mathcal{N}}
\newcommand{\el}{{\ell^\infty}}
\newcommand{\proj}{\operatorname{Proj}}
\newcommand{\BA}{\B^\A}
\newcommand{\Primes}{\mathbb{P}}
\newcommand{\ZZ}{\mathcal{Z}}
\renewcommand{\vector}{\mathfrak{v}}
\newcommand{\tvector}{\tilde{\mathfrak{v}}}
\newcommand{\BB}{\mathbb{B}}
\newcommand{\radius}{\mathfrak{r}}
\title{Generalized L\'{e}vy-Khintchine Theorems and a Conjecture of Y. Cheung}
\begin{document}
\theoremstyle{plain}
\newtheorem{thm}{Theorem}[section]
\newtheorem{lem}[thm]{Lemma}
\newtheorem{prop}[thm]{Proposition}
\newtheorem{cor}[thm]{Corollary}
\newtheorem{question}{Question}
\newtheorem{con}{Conjecture}
\theoremstyle{definition}
\newtheorem{defn}[thm]{Definition}
\newtheorem{exm}[thm]{Example}
\newtheorem{nexm}[thm]{Non Example}
\newtheorem{prob}[thm]{Problem}
\newtheorem{result}{Result}

\theoremstyle{remark}
\newtheorem{rem}[thm]{Remark}

\author{Gaurav Aggarwal}
\address{\textbf{Gaurav Aggarwal} \\
School of Mathematics,
Tata Institute of Fundamental Research, Mumbai, India 400005}
\email{gaurav@math.tifr.res.in}

\author{Anish Ghosh}
\address{\textbf{Anish Ghosh} \\
School of Mathematics,
Tata Institute of Fundamental Research, Mumbai, India 400005}
\email{ghosh@math.tifr.res.in}

\date{}

\thanks{ A.\ G.\ gratefully acknowledges support from a grant from the Infosys foundation to the Infosys Chandrasekharan Random Geometry Centre. G. \ A.\ and  A.\ G.\ gratefully acknowledge a grant from the Department of Atomic Energy, Government of India, under project $12-R\&D-TFR-5.01-0500$. }

\subjclass[2020]{11J13, 11J83, 37A17}
\keywords{Diophantine approximation, ergodic theory, high rank diagonal actions, flows on homogeneous spaces}


\begin{abstract}  
The celebrated L\'evy--Khintchine theorem is a fundamental limiting law that describes the growth rate of the denominators of the convergents in the continued fraction expansion of a Lebesgue-typical real number. In a recent breakthrough, Cheung and Chevallier \textit{(Annales scientifiques de l’ENS, 2024)} extended this theorem to higher dimensions.

In this paper, we resolve a conjecture of Y.~Cheung and answer a question of Cheung and Chevallier concerning L\'evy--Khintchine type theorems for arbitrary norms. We also establish a higher-dimensional analogue of the Doeblin--Lenstra law.

While our results are new in higher dimensions, they also yield significant improvements in the classical one-dimensional setting. Specifically, we revisit the L\'evy--Khintchine theorem and the Doeblin--Lenstra law through the lens of Mahler’s influential proposal to study Diophantine approximation on fractals. In particular, we prove these results for almost every point on the middle-third Cantor set. More broadly, our framework applies to a wide class of measures, including those supported on curves and on self-similar fractals generated by iterated function systems (IFS), and it also allows constraints on the selection of best approximates.
\end{abstract}

\maketitle

\tableofcontents

\section{Introduction}
For a real number $\theta$, we denote its continued fraction expansion as usual:
\begin{align*}
    \theta = a_0+ \frac{1}{a_1 + \frac{1}{a_2 + \frac{1}{a_3 + \frac{1}{\ddots}}}}, 
\end{align*}
where
\begin{align*}
    \frac{p_l}{q_l}= a_0+ \frac{1}{a_1 + \frac{1}{a_2 + \frac{1}{ \ddots + \frac{1}{a_l}}}}
\end{align*}
\noindent denotes the $l$-th convergent. The famous L\'{e}vy-Khintchine Theorem tells us that the denominators $(q_l)_{l \geq 0}$ of the convergents of the continued fraction expansions of almost all real numbers $\theta$ satisfy
$$
    \lim_{l \rightarrow \infty} \frac{1}{l} \log (q_l) = \frac{\pi^2}{12 \log 2}.
$$

The existence of the limit was proved by Khintchine \cite{Khintchine36}, while its value was computed by L\'{e}vy \cite{Levy36}. Subsequently, there have been many significant developments; we refer the reader to \cite{CC19} for a nice account of these.\\

In \cite{Mahler}, K. Mahler initiated a major field of research by asking for typical Diophantine properties of measures supported on proper subsets of $\R$, for instance the middle-third Cantor set. Mahler's suggestion has led to an explosion of activity in a very challenging field. Recent highlights include the study of badly approximable numbers on fractals (\cites{SimmonsWeiss, AG24Random}), Khintchine's theorem on fractals (\cites{KhalilLuethi, dattajana24, benard24}) and singular vectors on fractals (\cites{aggarwal2025 ,Khalil, AG24singular}).  

In this vein, a natural question is whether the L\'{e}vy-Khintchine Theorem remains valid for almost every point with respect to a measure other than the Lebesgue measure, e.g. the middle-third Cantor measure. Specifically, 

\begin{question}
\label{Que: Cantor}
    Let $\mu$ denote the restriction of the $\log 2/ \log 3$-dimensional Hausdorff measure on the middle-third Cantor set. Then for $\mu$-almost every $\theta$, do the denominators $(q_n)_{n \geq 0}$ of the convergents of the continued fraction expansions of $\theta$ satisfy
$$
    \lim_{l \rightarrow \infty} \frac{1}{l} \log (q_l) = \frac{\pi^2}{12 \log 2}?
$$
\end{question}
We will answer this question affirmatively, and for a wide class of measures; see Corollary~\ref{thm:intro 1} and~\ref{cor: cantor}.
\vskip 0.4in

Another central problem in Diophantine approximation is to obtain higher dimensional analogues of classical Diophantine approximation results in the real line. One difficulty in doing so, is that there does not seem to be a canonical continued fraction expansion in higher dimensions. Since the study of L\'evy--Khintchine phenomena fundamentally relies on the properties of the convergents \((p_\ell, q_\ell)\) of a real number \(\theta\)---which are precisely its best approximations---it is natural to seek a suitable higher-dimensional analogue of ``best approximates.'' One such analogue was proposed by Y.~Cheung. To motivate this, let us first observe that the definition of the best approximation of $\theta \in \R$ can be interpreted geometrically as follows.

An integer vector \((p, q) \in \mathbb{Z} \times \mathbb{N}\) is a best approximation of \(\theta \in \mathbb{R}\) if and only if the only primitive vectors of the lattice
\[
\Lambda_\theta = \begin{pmatrix}
1 & \theta \\
0 & 1
\end{pmatrix} \mathbb{Z}^2
\]
that lie inside the rectangle
\[
[-p - \theta q,\, p + \theta q] \times [-q,\, q]
\]
are the endpoints \((p + \theta q,\, q)\) and \((-p - \theta q,\,-q)\); see Figure~\ref{fig:rectangle}.
\begin{figure}[H]
    \centering
     \includegraphics[width=0.3\textwidth]{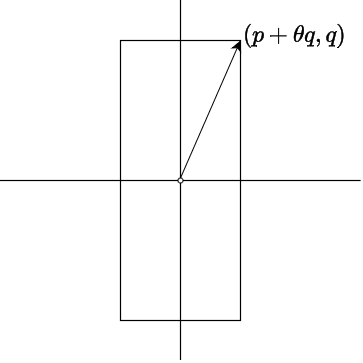}
    \caption{The rectangle \([-p - \theta q,\; p + \theta q] \times [-q,\; q]\) used to characterize best approximations in dimension 1.}
    \label{fig:rectangle}
\end{figure}

This rectangular region plays a fundamental geometric role in characterizing best approximations. In higher dimensions, a natural analogue of the rectangle is a \emph{hyper-cuboid} centered at the origin. Accordingly, a natural way to generalize the notion of a best approximation is as follows:

\begin{defn}
\label{def:best approx k=m, n=1}
Let \(\theta \in \R^m\). An integer vector \((p, q) \in \mathbb{Z}^m \times \mathbb{N}\) is said to be a \textbf{best approximation} of \(\theta\) if the only primitive vectors of the lattice
\[
\Lambda_\theta = \begin{pmatrix}
I_m & \theta \\
0 & 1
\end{pmatrix} \mathbb{Z}^{m+1}
\]
lying in the hyper-cuboid
\[
[-p_1 - \theta_1 q,\, p_1 + \theta_1 q] \times \cdots \times [-p_m - \theta_m q,\, p_m + \theta_m q] \times [-q,\, q]
\]
are the endpoints \((p + \theta q,\, q)\) and \((-p - \theta q,\,-q)\).
\end{defn}

\begin{figure}[H]
    \centering
    \includegraphics[width=0.5\textwidth]{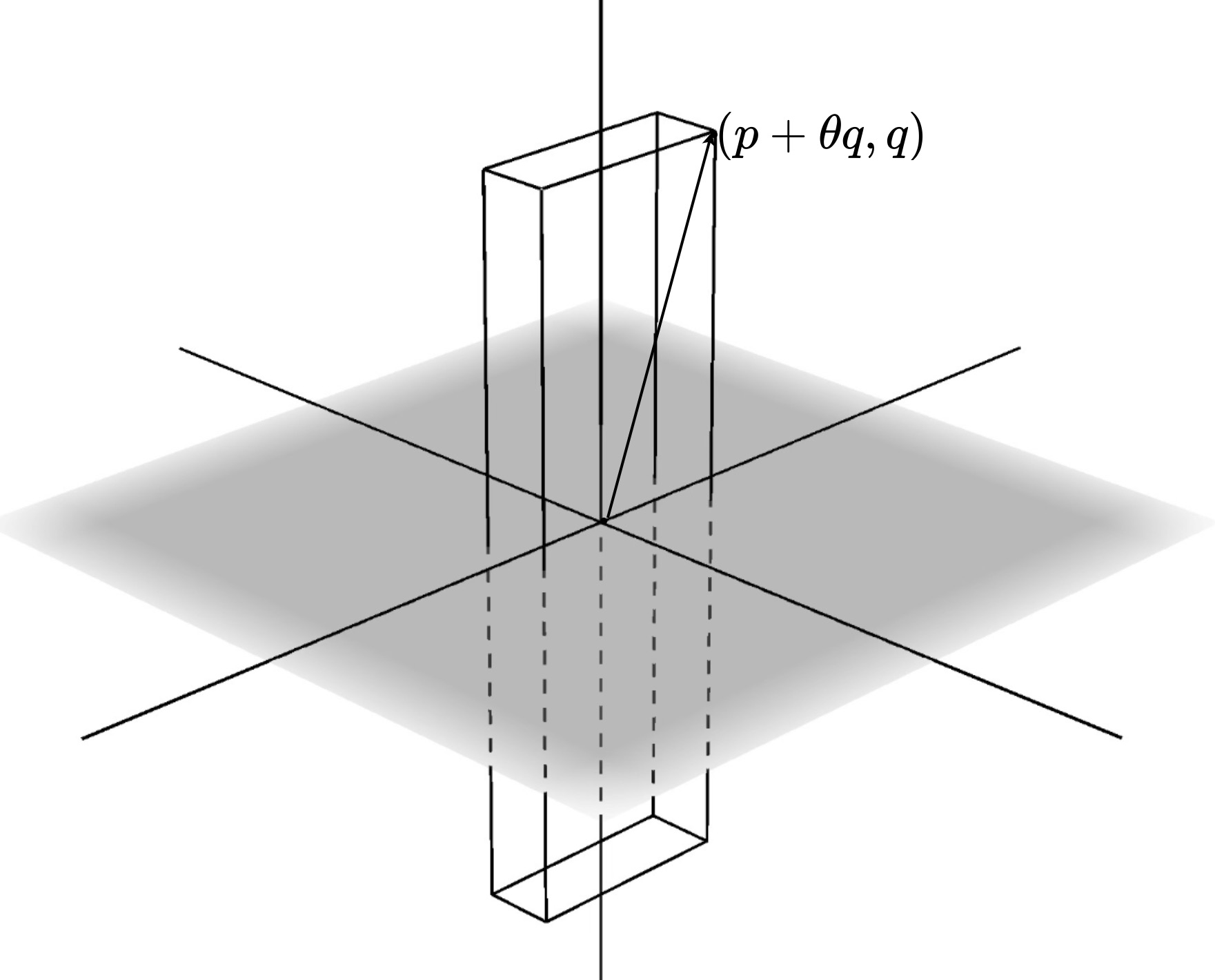}
    \caption{A schematic representation of the hyper-cuboid region in \(\mathbb{R}^{m+1}\) with \((p + \theta q, q)\) as one of its endpoints.}
    \label{fig:hypercuboid}
\end{figure}

With this definition, we have the following conjecture of Yitwah Cheung \footnote{Private Communication.}.

\begin{con} (Cheung's conjecture)
    \label{thm: Cheung Conjecture}
    There exists a constant $c \in (0, \infty)$ such that the following holds for Lebesgue almost every $\theta \in \R^m$. If $(p_l,q_l)$ denote the best approximates of $\theta$ (as in Definition \ref{def:best approx k=m, n=1}), then we have
    \begin{align}
        \label{eq:Cheung Conjecture}
        \lim_{l \rightarrow \infty} \frac{(\log q_l)^m}{l} = c. 
    \end{align}
\end{con}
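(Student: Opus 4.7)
The plan is to translate the statement into a counting problem for returns of the multi-parameter diagonal orbit of $\Lambda_\theta$ to a cross-section in the space of unimodular lattices $X_{m+1}=\mathrm{SL}_{m+1}(\R)/\mathrm{SL}_{m+1}(\Z)$. Let $A=\{a_{\mathbf{s}}:=\mathrm{diag}(e^{s_1},\ldots,e^{s_m},e^{-(s_1+\cdots+s_m)}):\mathbf{s}\in\R^m\}$ be the full diagonal Cartan subgroup. First I would define a codimension-$m$ cross-section $\mathcal{S}\subset X_{m+1}$ capturing the best-approximation condition: roughly, the set of $\Lambda$ admitting a primitive vector $v$ with $|v_i|=1$ for $i\le m$ and $|v_{m+1}|<1$, such that $\pm v$ are the only primitive vectors of $\Lambda$ inside the cuboid $[-1,1]^m\times[-|v_{m+1}|,|v_{m+1}|]$. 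A direct unwinding of Definition~\ref{def:best approx k=m, n=1} should show that the map $(p,q)\mapsto\mathbf{s}=(-\log|p_i+\theta_i q|)_{i=1}^m$ gives a bijection between the best approximates of $\theta$ and the discrete set $\{\mathbf{s}\in\R^m_{>0}:a_{\mathbf{s}}\Lambda_\theta\in\mathcal{S}\}$, with $\log q_l=s_{l,1}+\cdots+s_{l,m}+\log|v_{m+1}^{(l)}|$ for the $l$-th best approximate, where $v^{(l)}$ is the distinguished primitive vector of $a_{\mathbf{s}_l}\Lambda_\theta$.

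With this setup, counting best approximates with $q_l\le T$ amounts, up to a controlled correction, to counting visits $\mathbf{s}_l$ to the simplex $\Sigma_T=\{\mathbf{s}\in\R^m_{\ge 0}:\sum s_i\le\log T\}$, which has Lebesgue volume $(\log T)^m/m!$. The key ingredient is then a multi-parameter equidistribution statement: for Lebesgue-a.e.\ $\theta$ and any nice $E\subset X_{m+1}$,
\[
\lim_{T\to\infty}\frac{1}{|\Sigma_T|}\int_{\Sigma_T}\mathbf{1}_E(a_{\mathbf{s}}\Lambda_\theta)\,d\mathbf{s}=\mu(E),
\]
where $\mu$ is normalized Haar measure on $X_{m+1}$. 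This follows from mixing of the $A$-action combined with the fact that $\{\Lambda_\theta:\theta\in\R^m\}$ parametrizes a leaf of the expanding horospherical foliation through the identity coset (the one-parameter version is the classical horospherical equidistribution of Dani--Margulis; the multi-parameter extension is obtained via a Birkhoff--Wiener argument). Transferring this integrated statement into a count of returns to the codimension-$m$ cross-section via an Ambrose--Kakutani-type transverse measure $\hat\mu$ yields, for Lebesgue-a.e.\ $\theta$,
\[
\#\{l:q_l\le T\}\;\sim\;\hat{\mu}(\mathcal{S})\cdot\frac{(\log T)^m}{m!}\quad(T\to\infty).
\]
Setting $T=q_l$ and inverting gives $(\log q_l)^m/l\to m!/\hat{\mu}(\mathcal{S})=:c$, which is the desired conjecture.

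The main obstacle is the non-compactness of $\mathcal{S}$: it extends into the cusp of $X_{m+1}$ as $|v_{m+1}|\to 0$, and both the finiteness of the transverse measure $\hat\mu(\mathcal{S})$ and the control of cuspidal excursions of the orbit $\{a_{\mathbf{s}}\Lambda_\theta\}$ must be addressed before the ergodic theorem yields the correct asymptotic. The plan is to truncate $\mathcal{S}$ at $\{|v_{m+1}|\ge\eta\}$ and apply equidistribution to this compact piece, and then control the tail $\{|v_{m+1}|<\eta\}$ via a quantitative non-divergence estimate of Kleinbock--Margulis type combined with Rogers/Schmidt-style second moment bounds for the Siegel transform. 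Finiteness of $\hat\mu(\mathcal{S})$---which ensures $c>0$---follows from such a moment computation; positivity---ensuring $c<\infty$---is a consequence of Dirichlet's theorem, which guarantees infinitely many best approximates of $\theta$.
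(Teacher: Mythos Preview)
Your proposal follows the same overall strategy as the paper: build a codimension-$m$ cross-section in $\SL_{m+1}(\R)/\SL_{m+1}(\Z)$, identify best approximates of $\theta$ with visits of the multi-parameter diagonal orbit of $\Lambda_\theta$ to that cross-section, and read off the asymptotic from equidistribution over the simplex of flow parameters. Two implementation choices differ and are worth flagging. First, you normalize the first $m$ coordinates of the distinguished primitive vector and let $v_{m+1}$ be free; the paper instead normalizes coordinates $2,\ldots,m+1$ and leaves coordinate $1$ free. The advantage of the paper's choice is that the flow parameters then satisfy $t_1+\cdots+t_m=\log q_l$ exactly (Lemma~9.1), so $\{q_l\le e^T\}$ matches the simplex $J^T$ on the nose, whereas in your parametrization the correspondence carries the unbounded correction $\log|v_{m+1}|=\log\disp(\theta,p_l,q_l)$, forcing the truncation step you describe. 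Second, for the tail you propose truncation together with Kleinbock--Margulis non-divergence and Siegel/Rogers moment bounds; the paper's substitute is a combinatorial \emph{tempered} lemma (Lemma~8.1): there is an absolute constant $M$ such that any point of the cross-section $\B$ has at most $M$ returns to $\B$ within a fixed bounded box of flow parameters. This packing-type estimate, proved by a direct pigeonhole argument on primitive vectors, drives the paper's entire cusp control (Lemmas~10.1--10.2) and avoids any appeal to non-divergence. Finally, you should be aware that the phrase ``Ambrose--Kakutani-type transverse measure'' understates the work required in the multi-parameter setting; the paper relies on the multi-parameter cross-section machinery developed in the authors' companion paper for exactly this purpose, and that machinery (Results~7.1--7.7) is not a routine extension of the one-parameter theory.
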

In this paper, we will resolve this conjecture; see Remark \ref{rem: cheung conjecture}. \vskip 0.4in

Another natural generalization of the notion of best approximations in higher dimensions was introduced by Cheung and Chevallier \cite{CC19}, who defined the best Diophantine approximations for matrices as follows.
\begin{defn}
\label{def: best k=r=1}
    Let $m,n$ be positive integers. Fix norms $\|.\|_{\R^m}$ on $\R^m$ and $\|.\|_{\R^n}$ on $\R^n$. An integer vector $(p,q) \in \Z^m \times (\Z^n \setminus \{0\})$ is called a best approximation of $\theta \in \Mat$, if there is no integer solution $(p',q') \in \Z^m \times (\Z^n \setminus \{0\})$ other than $(p,q)$ and $(-p,-q)$ to the following inequalities
    \begin{align*}
        \|p'+ \theta q'\|_{\R^m} &\leq \|p+ \theta q\|_{\R^m}, \\
        \|q'\|_{\R^n} & \leq \|q\|_{\R^n}.
    \end{align*}
\end{defn}
\begin{rem}
\label{rem: difference}
    Note that above definition for $n=1$ is equivalent to following condition geometrically: An integer vector \((p, q) \in \mathbb{Z}^m \times \mathbb{N}\) is said to be a best approximation of \(\theta\) if the only primitive vectors of the lattice
\[
\Lambda_\theta = \begin{pmatrix}
I_m & \theta \\
0 & 1
\end{pmatrix} \mathbb{Z}^{m+1}
\]
lying in the hyper-cylinder
\[
\{x \in \R^m: \|x\|_{\R^m} \leq \|p+\theta q\|_{\R^m} \} \times [-q,\, q]
\]
are the endpoints \((p + \theta q,\, q)\) and \((-p - \theta q,\,-q)\).

\begin{figure}[H]
    \centering
    \includegraphics[width=0.5\textwidth]{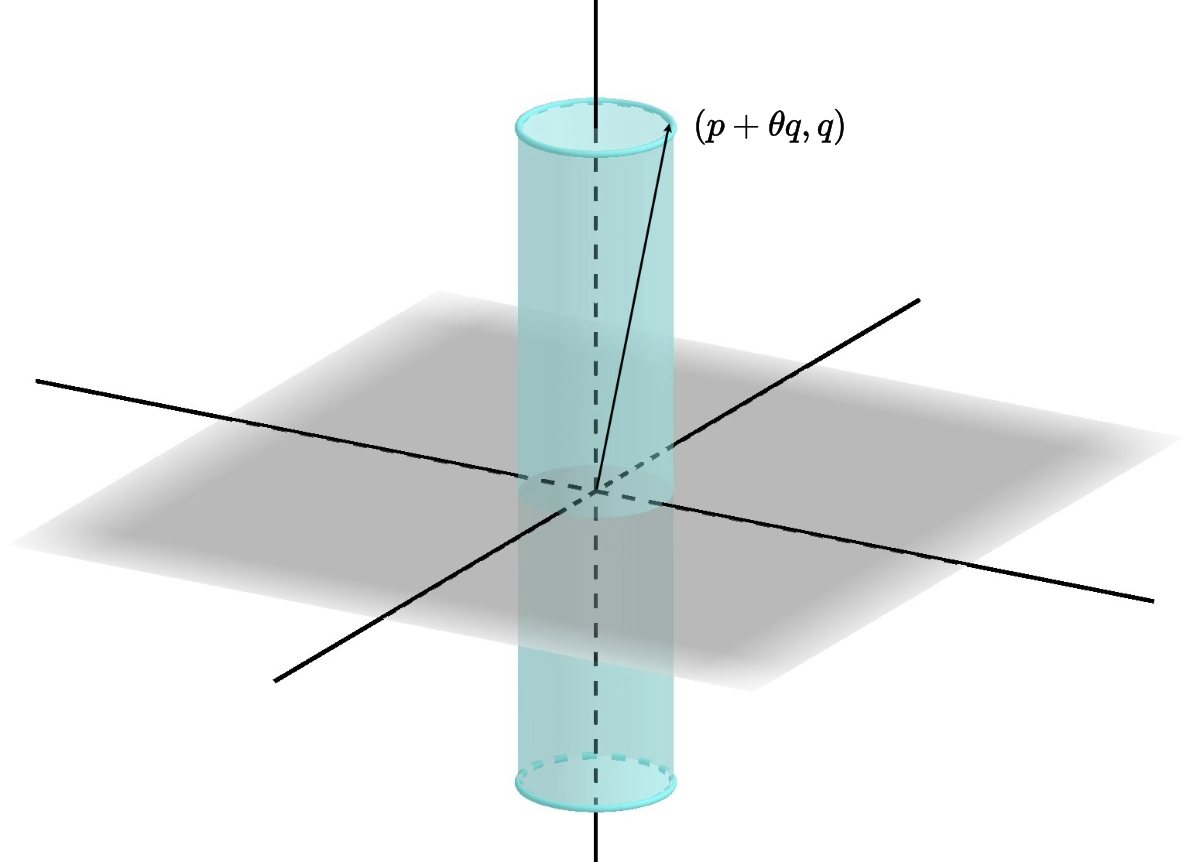}
    \caption{A schematic representation of the hyper-cylinder region in \(\mathbb{R}^{m+1}\) with \((p + \theta q, q)\) as one of its endpoints.}
    \label{fig:cylinder}
\end{figure}

Note that for the standard Euclidean norm \(\|\cdot\|_{\mathbb{R}^m}\) on \(\mathbb{R}^m\), the hypercuboid is always contained within the corresponding hypercylinder (having the same axis and endpoint). Consequently, the conditions in Definition~\ref{def:best approx k=m, n=1} are strictly weaker than those in Definition~\ref{def: best k=r=1}; see Appendix~\ref{appendix:best-approximation-remarks} for further discussion.

To visually compare the two, we include a top-down view of the respective regions below.
\begin{figure}[H]
    \centering
    \begin{minipage}{0.45\textwidth}
        \centering
        \begin{tikzpicture}
            \draw[thick] (0,0) circle (2);
            \filldraw[black] (0,0) circle (1pt);
            \draw[->,thick] (0,0) -- ({sqrt(3)},1);
            \node[right] at ({sqrt(3)},1) {$p + \theta q$};
        \end{tikzpicture} \\
        \textit{Top view of hyper-cylinder: isotropic in all directions}
    \end{minipage}
    \hfill
    \begin{minipage}{0.45\textwidth}
        \centering
        \begin{tikzpicture}
            \draw[thick] ({-sqrt(3)},1) rectangle ({sqrt(3)},-1);
            \draw[dashed] (0,0) circle (2);
            \filldraw[black] (0,0) circle (1pt);
            \draw[->,thick] (0,0) -- ({sqrt(3)},1);
            \node[right] at ({sqrt(3)},1) {$p + \theta q$};
        \end{tikzpicture} \\
        \textit{Top view of hyper-cuboid: bounded along coordinate axes}
    \end{minipage}
    \caption{Visual comparison between the cylinder (Cheung--Chevallier) and cuboid (Cheung's conjecture) conditions}
    \label{fig:geometric_interpretation_2}
\end{figure}

\end{rem}

In this generalization, Cheung and Chevallier \cite{CC19} proved an analogue of the L\'{e}vy–Khintchine theorem, which states the following.

\begin{thm}[{\cite[Theorems~1 and 2]{CC19}}]
\label{thm: CC19}
    Let $m,n$ be two positive integers. Suppose $\R^m$ and $\R^n$ are equipped with standard Euclidean norms $\|.\|_{\R^m}$ and $\|.\|_{\R^n}$ respectively. Then there exists a constant $L_{m,n}$ and a measure $\nu_{m,n}$ on $\R$ such that for Lebesgue almost every $\theta \in \Mat$, the sequence of best approximates $(p_l,q_l)$ of $\theta$ associated with the norms $\|.\|_{\R^m}$ and $\|.\|_{\R^n}$, satisfy
    \begin{align}
        \lim_{l \rightarrow \infty} \frac{1}{l} \log \|q_l\|_{\R^n} &= L_{m,n} \label{eq: CC19 1}\\
        \lim_{l \rightarrow \infty} \frac{1}{l} \log \|p_l + \theta q_l\|_{\R^m} &= -\frac{n}{m} L_{m,n}, \label{eq: CC19 2}\\
        \lim_{l \rightarrow \infty} \frac{1}{l} \sum_{i=1}^l \delta_{\|q_{i+1}\|_{\R^n}^n \|p_i + \theta q_i\|_{\R^m}^m} &= \nu_{m,n},\label{eq: CC19 3}
    \end{align}
    where the convergence of measure in last equality is given with respect to weak topology. Furthermore, the support of $\nu_{m,n}$ is contained in a bounded interval and contains $0$ provided that $m+n \geq 3$.  
\end{thm}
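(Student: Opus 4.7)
The plan is to encode best approximates as successive visits of the orbit $(g_t \Lambda_\theta)_{t \geq 0}$ to a cross-section in the space of unimodular lattices. Set $X = \SL_{m+n}(\R)/\SL_{m+n}(\Z)$, let $\Lambda_\theta = u_\theta \Z^{m+n}$ with $u_\theta = \begin{pmatrix} I_m & \theta \\ 0 & I_n \end{pmatrix}$, and let $g_t = \mathrm{diag}(e^{t/m} I_m,\, e^{-t/n} I_n)$. The primitive vectors of $\Lambda_\theta$ are precisely $v(p,q) := (p+\theta q, q)$ for primitive $(p,q) \in \Z^m \times \Z^n$, and their two half-norms $e^{t/m}\|p+\theta q\|_{\R^m}$ and $e^{-t/n}\|q\|_{\R^n}$ coincide at a unique time $t(p,q)$. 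First I would define a cross-section $\mathcal{S} \subset X$ consisting of lattices possessing a primitive vector whose two half-norms are balanced and strictly smaller than the corresponding half-norms of every competing primitive vector lying in the Cheung--Chevallier cylinder. The key geometric lemma is then that $(p,q)$ is a best approximate of $\theta$ if and only if $g_{t(p,q)}\Lambda_\theta \in \mathcal{S}$, so the times $t_l$ at which the orbit hits $\mathcal{S}$ form a discrete increasing sequence in bijection with the best-approximate indices.

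Next I would equip $\mathcal{S}$ with the transverse (flux) measure $\mu_\mathcal{S}$ disintegrated from Haar measure $m_X$ on $X$ along the $g_t$-direction, and verify that the suspension of the first-return map $T\colon \mathcal{S}\to\mathcal{S}$ with roof $r = t_1$ is measurably isomorphic to $(X, g_t, m_X)$. Mixing of $g_t$ on $X$, via the Howe--Moore theorem, upgrades to ergodicity of $(T,\mu_\mathcal{S})$. Birkhoff's ergodic theorem applied to $r$ yields $t_l/l \to L'$ for $\mu_\mathcal{S}$-a.e.\ starting point, and by a standard argument transferring cross-section statements to the orbit of $\mu_X$-almost every lattice, this holds for Lebesgue-a.e.\ $\theta$. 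Since on $\mathcal{S}$ the product $\|q\|_{\R^n}^n \|p+\theta q\|_{\R^m}^m$ is bounded above and below (it equals the common balanced half-norm raised to the power $m+n$), a short computation converts the time convergence into \eqref{eq: CC19 1} and \eqref{eq: CC19 2}, with $L_{m,n}$ proportional to $L'$. The measure $\nu_{m,n}$ in \eqref{eq: CC19 3} then arises by applying Birkhoff to bounded continuous test functions: it is the push-forward of $\mu_\mathcal{S}$ under the map reading off the quantity $\|q_{i+1}\|_{\R^n}^n \|p_i + \theta q_i\|_{\R^m}^m$ from two consecutive cross-section visits.

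The qualitative properties of $\mathrm{supp}(\nu_{m,n})$ are purely geometric. Minkowski's convex-body theorem applied to $g_{t_l}\Lambda_\theta$ caps $\|q_{l+1}\|_{\R^n}^n \|p_l+\theta q_l\|_{\R^m}^m$ uniformly in $l$, giving boundedness. For $m+n \geq 3$, one can explicitly produce lattices in $\mathcal{S}$ whose next best approximate lies in a direction arbitrarily close to that of the distinguished primitive vector --- a configuration impossible in the classical $m=n=1$ case --- so the product can be driven to $0$, placing $0$ in the support.

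The main obstacle is the construction and analysis of $\mathcal{S}$: one must prove discreteness of hits outside a Haar-null set, establish the bijection between hits and best approximates with no extra hits and no missed best approximates, identify $\mu_\mathcal{S}$ as the correct flux measure and check both its finiteness and the integrability of $r$ (so Birkhoff applies and the suspension isomorphism holds). Moreover, the weak-convergence assertion \eqref{eq: CC19 3} requires the gap-reading map $\mathcal{S} \to \R$ to be continuous off a $\mu_\mathcal{S}$-null set, which hinges on a careful study of $\partial \mathcal{S}$, where competing primitive vectors degenerate. This boundary analysis, rather than the ergodic input itself, is where I expect the technical weight of the proof to lie.
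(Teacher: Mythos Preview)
This theorem is quoted from \cite{CC19} and is not given an independent proof in the paper; it appears as background. The paper does, however, reprove and extend it as the special case $k=r=1$ of Corollary~\ref{thm:intro 1}, and that argument follows the same cross-section philosophy you describe: best approximates are encoded as visits of $(g_t\Lambda_\theta)_{t\ge 0}$ to a cross-section in the space of lattices, the transverse measure is identified, and equidistribution of the visits (deduced from Birkhoff genericity of $\Lambda_\theta$) yields the limiting statements via the Kac formula. So your overall plan is correct and aligned with both \cite{CC19} and the present paper.

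One step in your outline needs repair. Your claim that on $\mathcal{S}$ the product $\|q\|_{\R^n}^n\|p+\theta q\|_{\R^m}^m$ is bounded above \emph{and below} is false: Minkowski gives only the upper bound, and the product can be arbitrarily small---this is precisely what places $0$ in $\mathrm{supp}(\nu_{m,n})$ for $m+n\ge 3$, as you yourself argue in the last paragraph. Consequently the passage from $t_l/l\to L'$ to \eqref{eq: CC19 1}--\eqref{eq: CC19 2} is not the one-line computation you suggest: writing $\log\|q_l\|=\tfrac{1}{n}t_l+\log r_l$ with $r_l$ the balanced half-norm, you still need $(\log r_l)/l\to 0$, and $\log r_l$ is unbounded below. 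The paper handles this differently: it obtains \eqref{eq: CC19 1} by directly counting best approximates with $\|q\|\le e^T$ (the count grows like $cT$, and inverting gives $\log\|q_l\|/l\to c^{-1}$), proves \eqref{eq: CC19 2} via a \emph{second} cross-section tailored to the condition $\|p+\theta q\|\ge e^{-T}$, and only then links the two constants by passing to a subsequence along which the product stays in some fixed interval $[\alpha,\beta]\subset(0,\infty)$---such a subsequence exists because the products equidistribute with respect to a measure absolutely continuous on $[0,\e]$. Your balanced-time route can be repaired by the same subsequence-plus-monotonicity device, but as written the ``bounded below'' assertion is a genuine gap.
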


See \cite{cheung2021valuedimensionallevysconstant} for a numerical approximation of the constant in dimension~$2$ by the same authors. \\

The above very general theorem naturally gives rise to numerous open questions. A particularly natural one---posed as Question~1 in Section~10 of \cite{CC19} by Cheung and Chevallier---asks whether the result remains valid when the standard Euclidean norms on $\R^m$ and $\R^n$ are replaced with arbitrary norms. Another line of inquiry involves extending the result to cases where points in $\Mat$ are sampled from measures that are singular with respect to Lebesgue measure, such as Bernoulli measures on self-similar fractals or natural measures supported on embedded curves. A third direction concerns analogues of \eqref{eq: CC19 1}, \eqref{eq: CC19 2}, \eqref{eq: CC19 3} for best approximations $(p, q)$ that satisfy additional constraints---such as \emph{congruence} conditions or \emph{directional} restrictions.

In this paper, we answer all of these questions affirmatively; see Corollary~\ref{thm:intro 1}. We introduce a unified, higher-dimensional definition of best approximation (Definition~\ref{def: best general case}) that encompasses both Definitions~\ref{def:best approx k=m, n=1} and~\ref{def: best k=r=1}, and we establish a generalized version of the L\'evy--Khintchine theorem in this broader setting; see Corollary~\ref{thm: cor 3 to main thm}. Our main theorems are quite general: they provide precise asymptotics for the number of best approximations to a given matrix \(\theta\), whose denominators lie below a given threshold and satisfy additional geometric and arithmetic constraints; see Corollary~\ref{cor: Counting Best Approximates with Constraints}. As an application, Corollary~\ref{cor2 to main thm best} recovers and extends the main result of Shapira and Weiss~\cite[Thm.~1.1 \&~2.1]{SW22}, which concerned the joint equidistribution of best approximations, defined as in Definition~\ref{def: best k=r=1} in the case \(n = 1\).

\begin{rem}
Some readers have pointed out the differences between Definitions~\ref{def:best approx k=m, n=1} and~\ref{def: best k=r=1}—especially in the context of the $\ell^\infty$-norm—as a source of confusion. A related issue arises in the heuristic behind Cheung’s conjecture: namely, why the expression \((\log q_\ell)^m/\ell\) appears in one setting~\eqref{eq:Cheung Conjecture}, but only \(\log q_\ell/\ell\) in another~\eqref{eq: CC19 1}. These matters are clarified further in Appendix~\ref{appendix:best-approximation-remarks}.
\end{rem}

\begin{rem}
To keep the introduction concise, we have postponed the discussion of the Doeblin--Lenstra law to Section~\ref{subsec: Doeblin--Lenstra Law}, which also contains a proof of its generalisation to higher dimensions. For the generalisation of the Doeblin--Lenstra law to fractals and curves, see Corollaries~\ref{thm:intro 1} and~\ref{cor: cantor}.
\end{rem}

\begin{rem}
The authors wish to emphasize that Definition~\ref{def:best approx k=m, n=1} and Definition~\ref{def: best k=r=1} describe fundamentally different notions of best approximates. In fact, the former is significantly more difficult to study. The key reason lies in the dynamical systems naturally associated to each: the case of Definition~\ref{def: best k=r=1} reduces to the study of one-parameter flows on homogeneous spaces, whereas Definition~\ref{def:best approx k=m, n=1} requires dealing with multi-parameter flows.

A natural question for the reader is why the number of parameters in the flow should affect the difficulty. The reason is that the theory of cross-sections—an essential tool in homogeneous dynamics—is well developed in the one-parameter setting, owing to foundational work of Kakutani and Ambrose. In contrast, no such analogous theory existed for multi-parameter flows, primarily due to the absence of natural counterparts to first return maps and return time functions, which play a crucial role in the one-parameter setting. This fundamental obstruction hindered progress for a long time. Only recently have these challenges been systematically addressed in the authors' work~\cite{aggarwalghosh2024joint}, where a new framework for studying cross-sections in the multi-parameter setting was developed.

This disparity in dynamical complexity is also reflected in the literature. The relatively tractable case of Definition~\ref{def: best k=r=1} has been the focus of several recent works. For instance, Cheung and Chevallier~\cite{CC19} established a L\'evy--Khintchine-type limit law for best approximates in this setting (see Theorem~\ref{thm: CC19}). Shapira and Weiss~\cite{SW22} studied joint equidistribution phenomena (see Section~\ref{subsec: Joint Equidistribution of Best Approximates} for further discussion). The authors have also contributed two papers—\cite{AG25ELK}, which proves an effective L\'evy--Khintchine theorem, and~\cite{AG25DL}, which establishes an effective version of the Doeblin--Lenstra law—both in the setting of Definition~\ref{def: best k=r=1}.

By contrast, best approximates as in Definition~\ref{def:best approx k=m, n=1}, and more generally in Definition~\ref{def: best general case}, are systematically studied for the first time in this paper. Extending effective equidistribution results—and in particular, proving an effective L\'evy--Khintchine theorem—in this broader setting remains an intriguing direction for future research.
\end{rem}

We now outline the structure of the paper and indicate how the main results are developed.

\subsection*{Structure of the Paper}

We begin in Section~\ref{sec: main result} by introducing a unified, higher-dimensional notion of best approximation and stating the main result of the paper. Section~\ref{sec: Corollaries: General Case} discusses several consequences of this result, including a proof of Cheung’s Conjecture.

In Section~\ref{sec: Further refinements}, we consider the special case where best approximation is given by Definition~\ref{def: best k=r=1}, and refine the main result in this setting. Section~\ref{sec: Corollaries: Special Case $k=r=1$} derives additional corollaries for this special case, including a resolution of Question~\ref{Que: Cantor} and other questions posed after Theorem~\ref{thm: CC19}.

Section~\ref{sec: Notation} fixes the notational conventions used throughout the paper. Sections~\ref{sec: Results borrowed} and~\ref{sec: Properties of the Cross-section} are devoted to defining a suitable cross-section for a multi-parameter flow on the space of lattices, and establishing its key properties.

In Section~\ref{sec: Cross-section Correspondence}, we connect the dynamics on the constructed cross-section with Diophantine approximation. Specifically, we show that visits to the cross-section correspond to best approximates, and that equidistribution of best approximates is equivalent to genericity with respect to the cross-section. Section~\ref{sec: Cross-section Genericity} is dedicated to proving the equivalence of Birkhoff genericity and cross-section genericity.

Finally, Section~\ref{sec: Final proofs} concludes the paper by assembling the preceding results to prove Theorems~\ref{main thm best} and~\ref{Main thm time visits}, as well as Corollaries~\ref{thm:intro 1} and~\ref{cor: determinant}.

\section{Main Result}
\label{sec: main result}

We begin this section by introducing the aforementioned unified, higher-dimensional definition of best approximation, which simultaneously encompasses both Definitions~\ref{def:best approx k=m, n=1} and~\ref{def: best k=r=1}.

\subsection{Definitions}

For the remainder of the paper, we fix positive integers \( m \) and \( n \), and set \( d := m + n \). We fix a decomposition
\[
m = m_1 + \cdots + m_k, \qquad n = n_1 + \cdots + n_r,
\]
where \( m_i, n_j \in \mathbb{N} \) for all \( 1 \leq i \leq k \) and \( 1 \leq j \leq r \). We fix norms on each of the spaces \( \mathbb{R}^{m_1}, \ldots, \mathbb{R}^{m_k} \) and \( \mathbb{R}^{n_1}, \ldots, \mathbb{R}^{n_r} \), and—with slight abuse of notation—we denote each of these norms by \( \|\cdot\| \).

Using these, we define norms on \( \mathbb{R}^m \), \( \mathbb{R}^n \), and \( \mathbb{R}^d \) as follows:
\begin{align}
    \|(x_1, \ldots, x_k)\| &:= \max_i \|x_i\| \quad \text{for } (x_1, \ldots, x_k) \in \mathbb{R}^{m_1} \times \cdots \times \mathbb{R}^{m_k} = \mathbb{R}^m, \\
    \|y\| &:= \max_j \|\rho_j'(y)\| \quad \text{for } y \in \mathbb{R}^n, \\
    \|(x,y)\| &:= \max\{ \|x\|, \|y\| \} \quad \text{for } (x,y) \in \mathbb{R}^m \times \mathbb{R}^n = \mathbb{R}^d.
\end{align}

For \( \delta > 0 \), let \( B_\delta^l \) denote the closed ball of radius \( \delta \) centered at the origin in \( \mathbb{R}^l \), with respect to the chosen norm on \( \mathbb{R}^l \).

We define the projection maps \( \varrho_1 \) and \( \varrho_2 \) from \( \mathbb{R}^d = \mathbb{R}^m \times \mathbb{R}^n \) onto \( \mathbb{R}^m \) and \( \mathbb{R}^n \), respectively. Likewise, for each \( 1 \leq i \leq k \), let \( \rho_i \) denote the projection from \( \mathbb{R}^m = \mathbb{R}^{m_1} \times \cdots \times \mathbb{R}^{m_k} \) onto the \( i \)-th component \( \mathbb{R}^{m_i} \). Similarly, for each \( 1 \leq j \leq r \), let \( \rho_j' : \mathbb{R}^n = \mathbb{R}^{n_1} \times \cdots \times \mathbb{R}^{n_r} \to \mathbb{R}^{n_j} \) denote the projection onto the \( j \)-th component.
\begin{defn}
    \label{def: best general case}
    An integer vector \( (p, q) \in \mathbb{Z}^m \times (\mathbb{Z}^n \setminus \{0\}) \) is called a \emph{best approximation} of \( \theta \in \Mat \) if there are no other primitive vectors in the lattice
    \[
    \Lambda_\theta := \begin{pmatrix}
         I_m & \theta \\ 0 & I_n
     \end{pmatrix} \mathbb{Z}^{m+n}
    \]
    lying in the region
    \[
    B_{\|\rho_1(p+\theta q)\|}^{m_1} \times \cdots \times B_{\|\rho_k(p+\theta q)\|}^{m_k} \times B_{\|\rho_1'(q)\|}^{n_1} \times \cdots \times B_{\|\rho_r'(q)\|}^{n_r}
    \]
    other than \( \pm(p + \theta q, q) \).
\end{defn}

\begin{rem}
    The above definition of best approximation depends on the chosen decomposition \( m = m_1 + \cdots + m_k \), \( n = n_1 + \cdots + n_r \), as well as on the norms fixed on each of the corresponding subspaces. Throughout the paper, unless stated otherwise, we refer to best approximations with respect to this fixed decomposition and choice of norms.
\end{rem}

\begin{rem}
    If we take \( m = 1 + \cdots + 1 \) and \( n = 1 \) (i.e., \( k = m \), \( r = n = 1 \)), then the above definition coincides with Definition~\ref{def:best approx k=m, n=1}. On the other hand, if we use the trivial decomposition \( m = m \), \( n = n \) (i.e., \( k = r = 1 \)), then it reduces to Definition~\ref{def: best k=r=1}. Thus, the general definition presented here unifies both of these as special cases.
\end{rem} \vspace{0.15in}

\subsection{Natural Objects to Consider}

To each best approximation $(p,q)$ of $\theta \in \Mat$, we associate the following natural objects, motivated by our earlier work \cite{aggarwalghosh2024joint} on $\e$-approximations.

\subsubsection{The Error Term}

The first such object is the volume of the hyper-cuboid
\[
B_{\|\rho_1(p+\theta q)\|}^{m_1} \times \cdots \times B_{\|\rho_k(p+\theta q)\|}^{m_k} \times B_{\|\rho_1'(q)\|}^{n_1} \times \cdots \times B_{\|\rho_r'(q)\|}^{n_r}.
\]
Up to a constant factor, this volume equals the quantity $\disp(\theta, p,q)$, defined as
\begin{align}
\label{defdisp}
\disp(\theta, p,q) = \left( \prod_{i=1}^k \|\rho_i(p+\theta q)\|^{m_i} \right) \cdot \left( \prod_{j=1}^r \|\rho_j'(q)\|^{n_j} \right),
\end{align}
for every $\theta \in \Mat$ and $(p,q) \in \mathbb{Z}^m \times \mathbb{Z}^n$.

\medskip

By Minkowski’s second theorem and the fact that the lattice $\Lambda_\theta$ has no nonzero points in the interior of the above hyper-cuboid, it follows that there exists a constant $\e > 0$ such that
\[
\disp(\theta, p,q) < \e
\]
for all $\theta$ and corresponding best approximations $(p,q)$. We fix such an $\e$ for the remainder of the paper.

\subsubsection{Projections}

The second natural object associated with a Diophantine approximation $(p,q)$ is the direction of the vector $(p + \theta q, q)$. This corresponds to the projection of the vector onto the product of unit spheres; see, for instance,~\cite{AGT}. Explicitly, this projection is given by
\begin{align}
\label{defproj}
\left( 
\frac{\rho_1(p + \theta q)}{\|\rho_1(p + \theta q)\|}, \ldots, 
\frac{\rho_k(p + \theta q)}{\|\rho_k(p + \theta q)\|}, 
\frac{\rho_1'(q)}{\|\rho_1'(q)\|}, \ldots, 
\frac{\rho_r'(q)}{\|\rho_r'(q)\|} 
\right).
\end{align}

For ease of reference in later sections, we define the map
\[
\proj: \Mat \times \Z^m \times \Z^n \setminus \disp^{-1}(0) \longrightarrow \Sphere^{m_1} \times \cdots \times \Sphere^{m_k} \times \Sphere^{n_1} \times \cdots \times \Sphere^{n_r}
\]
to be the function that assigns to each triple $(\theta, p, q)$ the corresponding element in~\eqref{defproj}. Here, $\Sphere^\ell$ denotes the unit sphere in $\R^\ell$, with respect to the norm chosen on $\R^\ell$.

\subsubsection{The Congruence Condition}

In Diophantine approximation, it is often natural to restrict attention to a subclass of approximates that satisfy certain congruence conditions. This leads to the study of the distribution of approximates $(p,q)$ in $(\Z/N\Z)^d$ for all $N \in \N$, which can equivalently be phrased as the distribution of $(p,q)$ in $\widehat{\Z}^d$, the $d$-fold Cartesian product of $\widehat{\Z}$ (the profinite completion of $\Z$). Accordingly, the third natural object to consider is the element $(p,q) \in \widehat{\Z}_\prim^d$, where $\widehat{\Z}_\prim^d$ denotes the closure of the image of $\Z_\prim^d$ under the canonical diagonal embedding $\Z^d \hookrightarrow \widehat{\Z}^d$.

We refer the reader to~\cite{AlamGhoshYu} for Diophantine approximation with congruence conditions, as well as \cite{SW22}; and \cite{AG23} and \cite{AlamGhoshHan} for counting problems with congruence conditions.

\subsubsection{The Relative Size}
\label{sec: The Relative Size}

The three objects discussed above capture much of the structure associated with best approximates. However, an important feature still missing is the \emph{relative size} of the quantities
\[
\|\rho_1(p+\theta q)\|, \ldots, \|\rho_k(p+\theta q)\|, \|\rho_1'(q)\|, \ldots, \|\rho_r'(q)\|.
\]
We encode this information into a suitable unimodular lattice. More concretely, given $\theta \in \Mat$, define the unimodular lattice
\begin{align}
    \label{eq:def lambda theta}
    \Lambda_\theta = \begin{pmatrix}
        I_m & \theta \\ & I_n
    \end{pmatrix} \Z^d.
\end{align}
Clearly, $\Lambda_\theta$ contains the vector $(p+\theta q, q)$ as a primitive vector. Now consider the lattice
\[
\Lambda_\theta (p,q) = \begin{pmatrix}
    \|\rho_1(p+ \theta q)\|^{-1} I_{m_1} \\ & \ddots \\ && \|\rho_r'(q)\|^{-1}I_{n_r}
\end{pmatrix} \Lambda_\theta.
\]
This lattice is obtained by scaling $\Lambda_\theta$ so that the vector $(p+\theta q, q)$ maps to $\proj(\theta, p, q)$. Note that $\Lambda_\theta(p,q)$ is not unimodular—it has co-volume $\disp(\theta,p,q)^{-1}$—but it still encodes the magnitudes of the projected components. Specifically, for each $1 \leq i \leq k$, the quantity $\|\rho_i(p+ \theta q)\|^{-m_i}$ equals the co-volume of the $m_i$-dimensional lattice
\[
\Lambda_\theta(p,q) \cap \{0\}^{m_1 + \cdots + m_{i-1}} \times \R^{m_i} \times \{0\}^{m_{i+1} + \cdots + m_k + n}
\]
in its ambient vector space. 

Importantly, this lattice also contains the information of previously defined objects. Concretely, $\Lambda_\theta(p,q)$ has co-volume $\disp(\theta, p, q)^{-1}$ and contains a point in $\Sphere^{m_1} \times \cdots \times \Sphere^{n_r}$, namely $\proj(\theta, p, q)$. The fourth object we associate with an best approximate is a derived lattice that retains the relative size information while removing redundancy.

To formalize this, denote by $\bfe_1, \ldots, \bfe_d$ the standard basis vectors of $\R^d$. For $1 \leq j \leq d$, let $\E_d^j$ denote the space of unimodular lattices in $\R^d$ that contain $\bfe_j$ as a primitive vector.

We define a map
\[
\lambda_j: \left(\Mat  \times (\Z^m \times \Z^n)_\prim\right) \setminus \left(\disp^{-1}(0) \cup \left\{(\theta,p,q): (p+\theta q, q)_j=0\right\} \right) \rightarrow \E_d^j
\]
by setting
\[
\lambda_j(\theta, p, q) := A_j(\proj(\theta,p,q), \disp(\theta,p,q))\, \Lambda_\theta(p,q),
\]
where $A_j(x, \gamma)$ is the unique matrix in $\GL_d(\R)$ with determinant $\gamma$ that maps $x$ to $\pm \bfe_j$ (with sign matching the $j$-th coordinate of $x$) and acts on the orthogonal complement $\sspan(\bfe_1, \ldots, \bfe_{j-1}, \bfe_{j+1}, \ldots, \bfe_d)$ by positive scalar multiplication. 

Thus, the fourth object we associate is $\lambda_j(\theta, p, q) \in \E_d^j$, for any fixed $1 \leq j \leq d$.

\begin{rem}
    Note that different values of $j$ become more natural for various values of $m,n$. In particular, for $n=1$, the choice $j=d$ is most natural and was considered in \cite{SW22}.
\end{rem} \vspace{0.15in}

\subsection{Natural Measures}
\label{subsec:Natural measures}

Fix an integer $1 \leq j \leq d$. For notational convenience, we define the space
\begin{align}
    \label{eq: def z j}
    \ZZ_j := \E_d^j \times \Sphere^{m_1} \times \cdots \times \Sphere^{m_k} \times \Sphere^{n_1} \times \cdots \times \Sphere^{n_r} \times [0,\e] \times \hZp^d.
\end{align}
We also define the map
\[
\Theta_j: \left(\Mat \times (\Z^m \times \Z^n)_\prim\right) \setminus \left(\disp^{-1}(0) \cup \left\{(\theta, p, q) : (p+\theta q, q)_j = 0\right\} \right) \longrightarrow \ZZ_j
\]
by
\begin{align}
    \label{eq: def Xi}
    \Theta_j(\theta, p, q) := \left( \lambda_j(\theta, p, q),\, \proj(\theta, p, q),\, \disp(\theta, p, q),\, (p, q) \right).
\end{align}

The space $\ZZ_j$ is a product of several geometric spaces, each of which carries a natural measure. We now describe these component measures, and hence the product measure on $\ZZ_j$.

\begin{enumerate}
    \item[\textbf{(i)}] The space $\E_d^j$ can be identified with a homogeneous space of the group
    \[
    H^j := \{h \in \SL_d(\R) : h \cdot \bfe_j = \bfe_j\}.
    \]
    It carries a unique $H^j$-invariant probability measure, denoted by $m_{\E_d^j}$.

    \item[\textbf{(ii)}] For each $l \in \{m_1, \ldots, m_k, n_1, \ldots, n_r\}$, we define a measure $\mu^{(\Sphere^l)}$ on $\Sphere^l$ as the pushforward of the Lebesgue measure $m_{\R^l}$ restricted to the unit ball $\{x \in \R^l : \|x\| \leq 1\}$ under the projection map $x \mapsto x/\|x\|$. Note that $\mu^{(\Sphere^l)}$ is not necessarily a probability measure.

    \item[\textbf{(iii)}] On the interval $(0, \e)$, we consider the Lebesgue measure restricted to this interval, denoted $m_{\R}|_{[0,\e]}$.

    \item[\textbf{(iv)}] The set $\hZp^d$ is an orbit under the natural action of $\SL_d(\hZ)$ on $\hZ^d$. We endow it with the unique $\SL_d(\hZ)$-invariant probability measure, denoted $m_{\hZp^d}$.
\end{enumerate}

The product of these component measures defines a natural measure on the space $\ZZ_j$, denoted
\begin{align}
    \label{eq:def tmu j}
    \tmu^j := m_{\E_d^j} \times \mu^{(\Sphere^{m_1})} \times \cdots \times \mu^{(\Sphere^{m_k})} \times \mu^{(\Sphere^{n_1})} \times \cdots \times \mu^{(\Sphere^{n_r})} \times m_{\R}|_{[0,\e]} \times m_{\hZp^d}.
\end{align}

Finally, we define $\mu^j$ to be the probability measure obtained by normalizing $\tmu^j$. \vspace{0.15in}

\subsection*{Main Theorem} 

The main theorem of the paper is the following:

\begin{thm}
    \label{main thm best}
    Fix an integer $1 \leq j \leq d$. Then, there exists a measure $\tnu^j$ on $\ZZ_j$ (defined in~\eqref{eq: def z j}) such that for Lebesgue almost every $\theta \in \Mat$, the following holds. Let $(p_l, q_l) \in \Z^m \times \Z^n$ be the sequence of best approximations to $\theta$ (defined in~\ref{def: best general case}), ordered according to increasing $\|q_l\|$. Then, for all bounded continuous functions $f$ on the space $\ZZ_j$, we have:
    \begin{equation}
    \label{eq: main thm general 2}
        \lim_{T \rightarrow \infty} \frac{1}{T^{k+r-1}} \sum_{ \|q_l\| \leq e^T} f\left(\Theta_j(\theta,p_l,q_l)\right) = \frac{ c_{k+r-1}(n_1, \ldots, n_r)}{ (k+r-1)! \zeta(d)}\, \tnu^j(f),
    \end{equation}
    where
    \begin{align}
        \label{eq: def c k r 1}
        c_{k+r-1}(n_1, \ldots, n_r)= \sum_{(x_1, \ldots, x_r) \in \{0,1\}^r} (-1)^{r - (x_1 + \cdots+ x_r)} \left(n_1x_1 + \cdots + n_rx_r\right)^{k+r-1}.
    \end{align}
    
    The measure $\tnu^j$ satisfies the following properties:
    
    \begin{enumerate}
        \item The measure $\tnu^j$ is absolutely continuous with respect to $\tmu^j$, and the Radon--Nikodym derivative of $\tnu^j$ with respect to $\tmu^j$ is a characteristic function (explicitly described in the proof).
        
        \item The measure $\tnu^j$ decomposes as
        $$
        \tnu^j = \tnu^{j*} \otimes m_{\hZp^d},
        $$
        for some finite measure $\tnu^{j*}$ on 
        $\E_d^j \times \Sphere^{m_1} \times \cdots \times \Sphere^{m_k} \times \Sphere^{n_1} \times \cdots \times \Sphere^{n_r} \times \R$.
        
        \item Let $\tnu^{**}$ denote the pushforward of $\tnu^j$ under the natural projection to 
        $$
        \Sphere^{m_1} \times \cdots \times \Sphere^{m_k} \times \Sphere^{n_1} \times \cdots \times \Sphere^{n_r} \times \R.
        $$
        Then $\tnu^{**}$ is invariant under the action of the group
        $$
        \Ortho(m_1) \times \cdots \times \Ortho(m_k) \times \Ortho(n_1) \times \cdots \times \Ortho(n_r),
        $$
        where $\Ortho(l) \subset \GL_{l}(\R)$ denotes the group of linear transformations preserving the chosen norm on $\R^l$.
    \end{enumerate}
\end{thm}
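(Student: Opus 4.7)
The plan is to realize best approximates of $\theta$ as visits of the orbit $\{a_{\mathbf{t}} \Lambda_\theta\}$ to a carefully chosen cross-section in an adelic space $\XA$ of lattices, where $a_{\mathbf{t}}$ is a multi-parameter diagonal flow, and then to transfer Birkhoff equidistribution of a.e. orbit into equidistribution of the visits.

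\textbf{Step 1 (Flow and cross-section).} I would introduce the $(k+r-1)$-parameter diagonal flow
$$a_{\mathbf{t}} = \mathrm{diag}(e^{t_1}I_{m_1}, \ldots, e^{t_k}I_{m_k}, e^{-s_1}I_{n_1}, \ldots, e^{-s_r}I_{n_r}),$$
with $\mathbf{t}=(t_1,\ldots,t_k,s_1,\ldots,s_r)$ in the hyperplane $\sum_i m_it_i=\sum_j n_js_j$, and define a cross-section $\U \subset \XA$ whose defining condition requires the lattice to have a primitive vector whose components lie on the product of unit spheres $\Sphere^{m_1}\times\cdots\times\Sphere^{n_r}$, and no other primitive vectors inside the open hyper-cuboid of the appropriate unit radii (the covolume of that hyper-cuboid ranges in $[0,\e]$, accounting for the $\disp$ coordinate). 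The map $\Theta_j$ is essentially the read-out of the visiting lattice's data, post-composed with the standardizing matrix $A_j(\cdot,\cdot)$.

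\textbf{Step 2 (Correspondence).} Following Section~\ref{sec: Cross-section Correspondence}, I would show that each best approximate $(p,q)$ determines a unique $\mathbf{t}\in\R^{k+r-1}$ with $a_{\mathbf{t}}\Lambda_\theta \in \U$, and that the visit data is precisely $\Theta_j(\theta,p,q)$; conversely every visit corresponds to some best approximate. Under the cross-section normalization $e^{-s_j}\|\rho_j'(q)\|=1$, the counting condition $\|q_l\|\leq e^T$ becomes $\max_j s_j \leq T$, cutting out a polytope $P_T\subset\R^{k+r-1}$ whose volume grows as $c_{k+r-1}(n_1,\ldots,n_r)\,T^{k+r-1}/(k+r-1)!$; the inclusion--exclusion over the faces $\{s_j = T\}$ yields exactly the alternating sum in \eqref{eq: def c k r 1}.

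\textbf{Step 3 (Genericity).} Section~\ref{sec: Cross-section Genericity} will establish that for Lebesgue-a.e.\ $\theta\in\Mat$, the orbit $\{a_{\mathbf{t}}\Lambda_\theta\}$ is Birkhoff generic for Haar measure on $\XA$, and furthermore that Birkhoff genericity is equivalent to cross-section genericity for $\U$. Combined with Step 2, this gives, for a.e.\ $\theta$, convergence of the empirical distribution of the visits $\Theta_j(\theta,p_l,q_l)$ with $\|q_l\|\leq e^T$, after renormalizing by $\mathrm{vol}(P_T)$, to the push-forward of the cross-section transverse measure by the visit-data read-out. That push-forward is $\tnu^j$; the prefactor $c_{k+r-1}/((k+r-1)!\,\zeta(d))$ emerges from the polytope volume in Step~2 together with the classical factor $1/\zeta(d)$ arising from the passage from lattice points to primitive lattice points in the transverse measure computation. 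Properties (1)--(3) of $\tnu^j$ are then read off: (1) holds because $\U$ is cut out by open inequalities on primitive vectors, so its indicator pulls back to a characteristic function on $\ZZ_j$; (2) holds because the $\SL_d(\hZ)$-action on the $\hZp^d$-coordinate commutes with $a_{\mathbf{t}}$, so Haar on $\XA$ disintegrates with the $\hZp^d$-factor appearing as an independent Haar factor in $\tnu^j$; and (3) follows from the $\Ortho(m_1)\times\cdots\times\Ortho(n_r)$-equivariance of both $\U$ and $\Theta_j$ in the sphere and error coordinates.

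\textbf{Main obstacle.} The hardest step is Step~3: establishing Birkhoff and cross-section genericity for the multi-parameter flow and proving their equivalence. One must rule out escape of mass in the non-compact $\XA$ and verify that $\U$ carries a finite transverse measure with well-controlled boundary behavior. In one parameter, Kakutani--Ambrose theory handles this through first return maps, but those are not available in the multi-parameter setting; the substitute is the cross-section framework the authors developed in~\cite{aggarwalghosh2024joint}, and the bulk of Sections~\ref{sec: Results borrowed}--\ref{sec: Cross-section Genericity} will be devoted to adapting it to the specific cross-section $\U$ defined here.
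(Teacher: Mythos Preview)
Your proposal is correct and follows essentially the same approach as the paper: construct a cross-section $\BA \subset \seda$ in the adelic lattice space, establish the correspondence between best approximates and orbit visits (Lemma~\ref{lem: Cross-section correspondence}), show that Birkhoff genericity implies cross-section genericity (Lemma~\ref{lem:BG implies CG}), and define $\tnu^j$ as the pushforward of the transverse measure $\mu_{\BA}$ under the read-out maps $\tilde\psi,\tilde\psi^-$. The only details you gloss over are the two-to-one nature of the correspondence due to the sign ambiguity $\pm(p,q)$ (which the paper handles via the pair of maps $\tilde\psi$ and $\tilde\psi^-$), and the need to show separately that the exceptional best approximates failing the normalization conditions \eqref{eq: con 1}--\eqref{eq: con 3} are $o(T^{k+r-1})$ in number.
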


\begin{rem}
    Note that for a given $\theta$, the quantities $\Theta_j(\theta,p,q)$ might not be defined for certain values of best approximations $(p,q)$. The equation~\eqref{eq: main thm general 2} should be interpreted as asserting that the number of such exceptional approximates is of order $o(T^{k+r-1})$ as $T \to \infty$, and hence their contribution becomes negligible in the limit.

    Furthermore, the action of 
    $$
    \Ortho(m_1) \times \cdots \times \Ortho(m_k) \times \Ortho(n_1) \times \cdots \times \Ortho(n_r)
    $$
    on
    $$
    \Sphere^{m_1} \times \cdots \times \Sphere^{m_k} \times \Sphere^{n_1} \times \cdots \times \Sphere^{n_r} \times \R
    $$
    is given by the natural diagonal action: each $\Ortho(m_i)$ acts on the corresponding factor $\Sphere^{m_i}$ by rotation, and each $\Ortho(n_j)$ acts on the corresponding $\Sphere^{n_j}$ similarly, while the $\R$ coordinate is left invariant.
\end{rem} \vspace{0.15in}

\section{Corollaries: General Case}
\label{sec: Corollaries: General Case}
In this section, we briefly discuss some immediate consequences of Theorem~\ref{main thm best}.

\subsection{Counting Best Approximates with Constraints}

The problem of counting approximates to a matrix—especially under additional geometric or algebraic constraints—has received significant attention historically. We refer the reader to the classical work of Wolfgang Schmidt~\cite{S60} (see also \cite{WYYK}) and more recent developments such as \cites{Gallagher, AlamGhoshYu, NRS}. 

In light of this, it is natural to ask whether one can count the number of \emph{best approximates} that satisfy such constraints. Theorem~\ref{main thm best} provides an affirmative answer to this question. Specifically, by applying Theorem~\ref{muJM} to Theorem~\ref{main thm best}, and choosing the test function $f = \ind_A$ for a suitable $\tmu^j$-JM subset \( A \subset \ZZ_j \) (as defined in Definition~\ref{def JM}), we obtain the following result.

\begin{cor}
\label{cor: Counting Best Approximates with Constraints}
    Fix \( 1 \leq j \leq d \), and let \( A \subset \ZZ_j \) be a $\tmu^j$-JM subset. Then for Lebesgue-almost every \( \theta \in \Mat \), the following holds. Let \( \cN(\theta, A, T) \) denote the number of best approximates \( (p, q) \) to \( \theta \) satisfying
    \[
    \Theta_j(\theta, p, q) \in A, \quad \|q\| \leq e^T.
    \]
    Then
    \begin{equation}
    \label{eq: cor: Counting Best Approximates with Constraints}
        \lim_{T \to \infty} \frac{\cN(\theta, A, T)}{T^{k+r-1}} = \frac{ c_{k+r-1}(n_1, \ldots, n_r)}{(k+r-1)! \, \zeta(d)} \cdot \tnu^j(A).
    \end{equation}
\end{cor}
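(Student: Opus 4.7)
The plan is to reduce Corollary~\ref{cor: Counting Best Approximates with Constraints} directly to Theorem~\ref{main thm best} by choosing the test function $f = \ind_A$, so that $\cN(\theta, A, T)$ coincides, up to a negligible error, with the left-hand sum in~\eqref{eq: main thm general 2}. Indeed, the best approximates $(p_l,q_l)$ for which $\Theta_j(\theta,p_l,q_l)$ is undefined (i.e.\ those with vanishing $j$-th coordinate of $(p+\theta q,q)$ or with $\disp = 0$) number $o(T^{k+r-1})$, as noted in the remark following Theorem~\ref{main thm best}, so they may be harmlessly included or excluded. The only real obstruction is that $\ind_A$ is not continuous, while Theorem~\ref{main thm best} supplies the limit only for bounded continuous test functions.

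To bridge this gap, I would invoke Theorem~\ref{muJM}, whose role is to identify $\tmu^j$-JM sets as those whose boundary is $\tmu^j$-null. Since property~(1) of Theorem~\ref{main thm best} gives $\tnu^j \ll \tmu^j$, we deduce $\tnu^j(\partial A) = 0$. A standard inner/outer regularisation on $\ZZ_j$ (which is a product of smooth manifolds with a totally disconnected profinite factor), combined with a Urysohn-type argument, then produces, for each $\delta > 0$, bounded continuous functions $f_\delta^\pm$ on $\ZZ_j$ with $0 \leq f_\delta^- \leq \ind_A \leq f_\delta^+ \leq 1$ and $\tnu^j(f_\delta^+ - f_\delta^-) < \delta$. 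Fixing a sequence $\delta_i \to 0$, Theorem~\ref{main thm best} applied to each $f_{\delta_i}^\pm$ yields a countable intersection of full-measure sets of $\theta$ on which the limit~\eqref{eq: main thm general 2} holds simultaneously for every $i$ and every sign.

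On this common full-measure set, the pointwise estimate $f_{\delta_i}^-(\Theta_j(\theta,p_l,q_l)) \leq \ind_A(\Theta_j(\theta,p_l,q_l)) \leq f_{\delta_i}^+(\Theta_j(\theta,p_l,q_l))$ at each best approximate yields
$$C \, \tnu^j(f_{\delta_i}^-) \;\leq\; \liminf_{T \to \infty} \frac{\cN(\theta,A,T)}{T^{k+r-1}} \;\leq\; \limsup_{T \to \infty} \frac{\cN(\theta,A,T)}{T^{k+r-1}} \;\leq\; C \, \tnu^j(f_{\delta_i}^+),$$
where $C = c_{k+r-1}(n_1,\ldots,n_r)/\bigl((k+r-1)!\,\zeta(d)\bigr)$. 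Both outer bounds converge to $C \, \tnu^j(A)$ as $i \to \infty$, establishing~\eqref{eq: cor: Counting Best Approximates with Constraints}. The deep input is entirely absorbed into Theorem~\ref{main thm best}; the present argument is the standard \emph{JM implies continuity-point sandwich} reduction. The main point worth flagging is the bookkeeping around undefined values of $\Theta_j$ and the need to invoke Theorem~\ref{muJM} to connect the JM hypothesis (a purely measure-theoretic condition on $A$) with the existence of continuous sandwich functions whose $\tnu^j$-integrals converge to $\tnu^j(A)$ from either side.
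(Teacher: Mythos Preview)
Your proposal is correct and follows essentially the same approach as the paper: deduce the corollary from Theorem~\ref{main thm best} with $f=\ind_A$, using that $A$ is $\tnu^j$-JM (since $\tnu^j\ll\tmu^j$ by property~(1)). The only difference is cosmetic---the paper simply invokes Theorem~\ref{muJM} directly (tight convergence of the empirical measures from~\eqref{eq: main thm general 2} implies convergence on all $\tnu^j$-JM sets), whereas you spell out the underlying Urysohn sandwich argument that proves Theorem~\ref{muJM}.
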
 

\begin{rem}
\label{rem: importance  of A}
    We emphasize that by choosing the set \( A \) appropriately in Corollary~\ref{cor: Counting Best Approximates with Constraints}, one can impose various types of constraints on the best approximates. These include:
    \begin{itemize}
        \item \emph{Congruence conditions} via the \( \hZ^d \) component,
        \item \emph{Directional constraints} via the \( \Sphere^{m_1} \times \cdots \times \Sphere^{n_r} \) component,
        \item \emph{Quality of approximation} via the \( \R \) component, and
        \item \emph{Relative size constraints} via the \( \E_d^j \) component.
    \end{itemize}
\end{rem}
\vspace{0.2in}

\subsection{Generalized L\'{e}vy-Khintchine Theorem}
\label{subsec: Generalized Levy-Khintchine Theorem}

We begin with an observation that allows us to deduce L\'{e}vy-Khintchine-type theorems from Theorem~\ref{main thm best}. Fix \( \theta \in \Mat \), and let \( (p_l, q_l) \in \Z^m \times \Z^n \) denote the sequence of best approximations to \( \theta \) (as defined in~\ref{def: best general case}), ordered by increasing \( \|q_l\| \). Then we have:
\[
\cN(\theta, \ZZ_j, \log \|q_l\|) = l,
\]
where \( \cN(\cdot) \) is as defined in Corollary~\ref{cor: Counting Best Approximates with Constraints}. It follows that
\[
\frac{(\log \|q_l\|)^{k+r-1}}{l} = \frac{(\log \|q_l\|)^{k+r-1}}{\cN(\theta, \ZZ_j, \log \|q_l\|)}.
\]
This identity shows that the L\'{e}vy-Khintchine theorem can be understood as an asymptotic for the growth rate of the number of best approximates as the denominator \( q \) grows. Thus, applying Corollary~\ref{cor: Counting Best Approximates with Constraints} immediately yields the following result:

\begin{cor}
    \label{thm: cor 3 to main thm}
    For Lebesgue almost every \( \theta \in \Mat \), the sequence \( (p_l, q_l) \in \Z^m \times \Z^n \) of best approximations to \( \theta \), ordered by increasing \( \|q_l\| \), satisfies
    \begin{equation}
        \label{eq: cor 3 to main thm}
        \lim_{l \to \infty} \frac{(\log \|q_l\|)^{k+r-1}}{l}
        = \left( \frac{c_{k+r-1}(n_1, \ldots, n_r)}{(k+r-1)! \, \zeta(d)} \cdot \tnu^j(\ZZ_j) \right)^{-1}.
    \end{equation}
\end{cor}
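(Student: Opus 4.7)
The approach is to invert the counting asymptotic in Corollary~\ref{cor: Counting Best Approximates with Constraints}; the author's motivating discussion just before the corollary already carries out the core algebraic step, so the task is to make this precise and verify the minor technical points.

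I would first invoke Corollary~\ref{cor: Counting Best Approximates with Constraints} with $A = \ZZ_j$ to obtain, for Lebesgue almost every $\theta \in \Mat$,
\[
\lim_{T \to \infty} \frac{\cN(\theta,\ZZ_j,T)}{T^{k+r-1}} = \frac{c_{k+r-1}(n_1,\ldots,n_r)}{(k+r-1)!\,\zeta(d)} \cdot \tnu^j(\ZZ_j) =: C.
\]
Here $C$ is finite and positive: finiteness follows from the fact that $\tmu^j$ is a product of finite measures (cf.\ Section~\ref{subsec:Natural measures}) together with Theorem~\ref{main thm best}(1) (whose density is a characteristic function), while positivity of $\tnu^j$ is guaranteed by the construction in Theorem~\ref{main thm best}.

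Next I would verify two elementary identities for the enumeration $(p_l,q_l)$. First, by the definition of the counting function,
\[
\cN(\theta, \ZZ_j, \log\|q_l\|) = l - E_l,
\]
where $E_l$ counts best approximates with $\|q\|\le\|q_l\|$ on which $\Theta_j$ is undefined (i.e.\ those in $\disp^{-1}(0)$ or those with vanishing $j$-th coordinate of $(p+\theta q,q)$); by the remark following Theorem~\ref{main thm best}, $E_l = o(T_l^{k+r-1}) = o(l)$ once the main limit is in force. Second, the sequence $\|q_l\|$ tends to infinity as $l\to\infty$, because within any ball $\{\|q\|\le R\}$ there are only finitely many primitive $q$, and for each such $q$ at most finitely many $p$ can give a best approximate (indeed $p$ must satisfy $\|p+\theta q\|$ smaller than the norm of the next best approximate); therefore $T_l := \log\|q_l\| \to \infty$.

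Substituting $T = T_l$ into the continuous limit gives $(l - o(l))/T_l^{k+r-1} \to C$, hence $l/T_l^{k+r-1} \to C$; taking reciprocals yields the claimed formula. I do not anticipate a serious obstacle: the only delicate point is the passage from the continuous limit in $T$ to evaluation along the discrete sequence $T_l$, which is automatic as soon as $T_l\to\infty$, so the proof reduces to bookkeeping on top of Corollary~\ref{cor: Counting Best Approximates with Constraints}.
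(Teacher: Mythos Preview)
Your proposal is correct and follows exactly the same approach as the paper: apply Corollary~\ref{cor: Counting Best Approximates with Constraints} with $A=\ZZ_j$, use the identity $\cN(\theta,\ZZ_j,\log\|q_l\|)=l$ (up to the $o(l)$ exceptional terms you noted), and invert. Your write-up is in fact more careful than the paper's one-line deduction, as you explicitly address the possibly undefined values of $\Theta_j$, the positivity and finiteness of the limiting constant, and the fact that $\|q_l\|\to\infty$.
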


\begin{rem}
\label{rem: cheung conjecture}
    Cheung's conjecture follows from Corollary~\ref{thm: cor 3 to main thm} in the special case \( n = r = 1 \), \( k = m \). Moreover, Equation~(1) in the Cheung–Chevallier theorem \cite{CC19}, which treats general norms, follows from Corollary~\ref{thm: cor 3 to main thm} by taking \( k = r = 1 \).
\end{rem}

\begin{rem}
    As noted above, the classical L\'{e}vy-Khintchine theorem concerns the asymptotic count of best approximates as the denominator \( q \) tends to infinity. Corollary~\ref{cor: Counting Best Approximates with Constraints} therefore yields a \emph{L\'{e}vy-Khintchine-type theorem with constraints}, where one can impose congruence, directional, or size restrictions on the approximates. 
\end{rem} \vspace{0.2in}

\subsection{Joint Equidistribution of Best Approximates}
\label{subsec: Joint Equidistribution of Best Approximates}
The third corollary of Theorem~\ref{main thm best} generalises the joint equidistribution of best approximations studied by Shapira and Weiss in~\cite{SW22}. In fact, in the special case $n = r = k = 1$ and $j = d$, this corollary recovers the main result of~\cite{SW22}. Moreover, our result yields several further Diophantine corollaries. These can be derived in a manner analogous to that in~\cite{aggarwalghosh2024joint}, where similar corollaries were obtained from an equidistribution result concerning $\varepsilon$-approximates.

\begin{cor}
\label{cor2 to main thm best}
    Fix an integer $1 \leq j \leq d$. Suppose $\nu^j$ denotes the probability measure on $\ZZ_j$ obtained by normalising $\tnu^j$. Then for Lebesgue almost every $\theta \in \Mat$, the following holds. Let $(p_l, q_l) \in \Z^m \times \Z^n$ be the sequence of best approximations of $\theta$, ordered according to increasing $\|q_l\|$. Then for every bounded continuous function $f$ on $\ZZ_j$, we have 
    \begin{equation*}
       \frac{1}{l} \sum_{i=1}^l f\left( \Theta_j(\theta, p, q) \right) = \nu^j(f). 
    \end{equation*}
\end{cor}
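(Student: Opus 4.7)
The plan is to deduce this corollary by a direct ratio argument from Theorem~\ref{main thm best}: one divides the $f$-weighted asymptotic in~\eqref{eq: main thm general 2} by its $f\equiv 1$ specialisation, which is precisely Corollary~\ref{thm: cor 3 to main thm}. Since Corollary~\ref{thm: cor 3 to main thm} is itself extracted from Theorem~\ref{main thm best} applied to the bounded continuous function $\mathbf{1}$, both statements hold simultaneously on a common full-measure set of $\theta \in \Mat$, so the two limits can legitimately be combined for the same $\theta$.

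First I would fix $\theta$ in this common full-measure set and set $T_l := \log \|q_l\|$, where $(p_l, q_l)$ are the best approximates ordered by increasing $\|q_l\|$. Because of this ordering, the truncation $\|q_i\| \leq e^{T_l}$ captures exactly the indices $i = 1, \dots, l$ (up to boundary ties in norm, whose cumulative contribution is $o(l)$). Applying Theorem~\ref{main thm best} along the sequence $T = T_l$ gives
\[
  \frac{1}{T_l^{k+r-1}} \sum_{i=1}^{l} f\bigl(\Theta_j(\theta, p_i, q_i)\bigr) \longrightarrow \frac{c_{k+r-1}(n_1,\ldots,n_r)}{(k+r-1)!\,\zeta(d)}\, \tnu^j(f),
\]
while Corollary~\ref{thm: cor 3 to main thm} supplies the companion limit
\[
  \frac{T_l^{k+r-1}}{l} \longrightarrow \left( \frac{c_{k+r-1}(n_1,\ldots,n_r)}{(k+r-1)!\,\zeta(d)}\, \tnu^j(\ZZ_j) \right)^{\!-1}.
\]
Multiplying these two asymptotics, the combinatorial prefactor cancels, leaving $\tnu^j(f)/\tnu^j(\ZZ_j) = \nu^j(f)$, which is the desired equidistribution.

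I do not expect a substantive obstacle, as both ingredients are already in hand. The only bookkeeping points are that Theorem~\ref{main thm best} features a continuous parameter $T \to \infty$ whereas we evaluate it along the discrete sequence $T_l$, which is harmless since $T_l \to \infty$ as $l \to \infty$; and that $\Theta_j$ is undefined on certain exceptional approximates, but the remark following Theorem~\ref{main thm best} guarantees this exceptional set contains only $o(T^{k+r-1}) = o(l)$ indices and hence does not affect the normalised Ces\`{a}ro sum.
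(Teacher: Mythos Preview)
Your proposal is correct and matches the approach implicit in the paper: the paper does not supply an independent proof of this corollary but treats it as an immediate consequence of Theorem~\ref{main thm best} together with Corollary~\ref{thm: cor 3 to main thm}, and the ratio argument you describe (evaluate the weighted asymptotic~\eqref{eq: main thm general 2} along $T=T_l=\log\|q_l\|$ and divide by its $f\equiv 1$ specialisation) is exactly how one extracts the normalised Ces\`aro limit from those two ingredients.
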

\vspace{0.2in}

\subsection{Doeblin--Lenstra Law}
\label{subsec: Doeblin--Lenstra Law}
An especially intriguing application of Corollary~\ref{cor2 to main thm best} is to the classical Doeblin--Lenstra conjecture, which was resolved by Bosma, Jager, and Wiedijk~\cite{BJW}. The theorem states that for Lebesgue-almost every $\theta \in (0,1)$,
\[
\lim_{l \to \infty} \frac{1}{l} \sum_{j=1}^l f\bigl( q_j |\theta q_j - p_j| \bigr) = \int_0^1 f(z)\, d\nu(z),
\]
for every bounded continuous function $f : [0,1] \to \mathbb{R}$, where $\nu$ is the probability measure on $[0,1]$ with density
\begin{align} \label{eq: def measure Doeblin Lenstra}
    \frac{d\nu}{dz} = 
\begin{cases}
\displaystyle \frac{1}{\ln 2}, & 0 \le z \le 1/2, \\[4pt]
\displaystyle \frac{1 - 1/z}{\ln 2}, & 1/2 < z \le 1.
\end{cases}
\end{align}

Geometrically, the quantity $q_k |\theta q_k - p_k|$ represents the area of a rectangle, as depicted in Figure~\ref{fig:rectangle}. This naturally motivates higher-dimensional analogues, where the role of $q_k |\theta q_k - p_k|$ is played by the volume of suitable geometric objects:

\begin{itemize}
    \item In the setting of Definition~\ref{def:best approx k=m, n=1}, the analogue is the volume of the hypercuboid in Figure~\ref{fig:hypercuboid}, given by
    \[
    q \cdot |p_1 + \theta_1 q| \cdots |p_m + \theta_m q|.
    \]
    
    \item For Definition~\ref{def: best k=r=1}, it corresponds to the volume of the cylinder in Figure~\ref{fig:cylinder}, namely
    \[
    \|q\|^n \cdot \|p + \theta q\|^m.
    \]
\end{itemize}

More generally, for best approximations as defined in Definition~\ref{def: best general case}, the natural generalization is the volume of a product of balls:
\[
B_{\|\rho_1(p+\theta q)\|}^{m_1} \times \cdots \times B_{\|\rho_k(p+\theta q)\|}^{m_k} \times B_{\|\rho_1'(q)\|}^{n_1} \times \cdots \times B_{\|\rho_r'(q)\|}^{n_r}.
\]
Up to a multiplicative constant, this volume coincides with the quantity $\disp(\theta, p,q)$ defined by
\begin{align*}
\disp(\theta, p,q) = \left( \prod_{i=1}^k \|\rho_i(p+\theta q)\|^{m_i} \right) \cdot \left( \prod_{j=1}^r \|\rho_j'(q)\|^{n_j} \right),
\end{align*}
for every $\theta \in \Mat$ and $(p,q) \in \mathbb{Z}^m \times \mathbb{Z}^n$.

\medskip

We therefore obtain a higher-dimensional analogue of the Doeblin--Lenstra law using Corollary~\ref{cor2 to main thm best}:

\begin{cor}
    \label{cor 3 Doeblin Lenstra}
    There exists a probability measure $\nu_{\mathbb{R}}$ on $\mathbb{R}$ such that the following holds for Lebesgue-almost every $\theta \in \Mat$. Let $(p_l, q_l) \in \mathbb{Z}^m \times \mathbb{Z}^n$ denote the sequence of best approximations to $\theta$, ordered by increasing $\|q_l\|$. Then for every bounded continuous function $f : \mathbb{R} \to \mathbb{R}$,
    \[
    \lim_{l \to \infty} \frac{1}{l} \sum_{i=1}^l f\left( \disp(\theta, p_i, q_i) \right) = \nu_{\mathbb{R}}(f).
    \]
\end{cor}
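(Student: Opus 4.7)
The plan is to deduce Corollary~\ref{cor 3 Doeblin Lenstra} as a direct specialisation of Corollary~\ref{cor2 to main thm best}, by testing against a function on $\ZZ_j$ that depends only on the $\disp$ coordinate. Fix any $1 \leq j \leq d$ (say $j = d$), and let $\pi_\R : \ZZ_j \to \R$ denote the projection onto the $[0,\e]$ factor in the product decomposition~\eqref{eq: def z j}. Inspecting the definition~\eqref{eq: def Xi} of $\Theta_j$, I read off
\[
\pi_\R \circ \Theta_j(\theta, p, q) = \disp(\theta, p, q)
\]
wherever $\Theta_j$ is defined.

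Given a bounded continuous $f : \R \to \R$, set $\tilde{f} := f \circ \pi_\R$; this is bounded and continuous on $\ZZ_j$. Applying Corollary~\ref{cor2 to main thm best} to $\tilde{f}$ then yields, for Lebesgue-almost every $\theta \in \Mat$,
\[
\lim_{l \to \infty} \frac{1}{l}\sum_{i=1}^l \tilde{f}\bigl(\Theta_j(\theta, p_i, q_i)\bigr) = \nu^j(\tilde{f}).
\]
I define the candidate measure as the pushforward
\[
\nu_\R := (\pi_\R)_\ast \nu^j,
\]
which is a probability measure on $\R$ (supported in $[0,\e]$, since $\nu^j$ is a probability measure on $\ZZ_j$). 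The change-of-variables formula gives $\nu^j(\tilde{f}) = \nu_\R(f)$, and combining with the identity $\tilde{f}(\Theta_j(\theta, p_i, q_i)) = f(\disp(\theta, p_i, q_i))$ provides the desired conclusion.

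The only technical point is that the sum in Corollary~\ref{cor 3 Doeblin Lenstra} runs over \emph{all} best approximates, whereas $\Theta_j$ is undefined on those $(p,q)$ with $\disp(\theta, p, q) = 0$ or with $(p+\theta q, q)_j = 0$. As noted in the Remark following Theorem~\ref{main thm best}, for Lebesgue-almost every $\theta$ the count of such exceptional indices among $i \leq l$ is $o(l)$; since $f$ is bounded, their total contribution to the Cesàro average vanishes as $l \to \infty$ and does not affect the limit. There is no genuine obstacle at this stage: the entire statement collapses to a one-line consequence of Corollary~\ref{cor2 to main thm best} once one recognises $\disp$ as a coordinate of $\Theta_j$. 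All substantive work is packaged into the proofs of Theorem~\ref{main thm best} and Corollary~\ref{cor2 to main thm best}, whose derivation via the cross-section machinery constitutes the real content of the paper.
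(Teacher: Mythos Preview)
Your proposal is correct and matches the paper's own approach: the paper presents Corollary~\ref{cor 3 Doeblin Lenstra} explicitly as an application of Corollary~\ref{cor2 to main thm best}, and your argument---pulling back $f$ along the projection $\pi_\R$ to the $\disp$ coordinate of $\Theta_j$ and taking $\nu_\R = (\pi_\R)_\ast \nu^j$---is precisely the intended one-line deduction. Your handling of the exceptional indices where $\Theta_j$ is undefined is also correct and consistent with the Remark following Theorem~\ref{main thm best}.
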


\begin{rem}
For historical background and additional references on the Doeblin-Lenstra law, see Remark~\ref{rem: Doeblin Lenstra}. See also Corollaries~\ref{thm:intro 1} and~\ref{cor: cantor} for a generalisation of the above result in the special case $k = r = 1$ to measures singular with respect to Lebesgue measure, including those supported on curves and fractals.
\end{rem}

\section{Further Refinement}
\label{sec: Further refinements}

In special cases $k=r=1$, the statement of Theorem \ref{main thm best} can be further strengthened as follows.

    \begin{thm}
    \label{Main thm time visits}
    Assume that $k=r=1$. Fix $\theta \in \Mat$ of generic type (defined as in Definition \ref{def: generic type}). Fix an integer $1 \leq j \leq d$. Fix a $\tnu^j$-JM subset $A \subset \ZZ_j$ (defined as in Definition~\ref{def JM}) of positive $\tnu^j$-measure, such that
    \begin{align}
    \label{eq: condition Main thm time visits}
        A \subset \{(\Lambda, v, \gamma, w) \in \E_d^j \times (\Sphere^m \times \Sphere^n) \times [0,\e] \times \hZ^d: v_j \geq 0\}.
    \end{align}
    Then there exists a measure $\mu^{A}$ (independent of $\theta$) on $(\ZZ_j \times \R)^\N$ such that the following holds. 
    
    Let $(p_l, q_l) \in \Z^m \times \Z^n$ be the sequence of best approximates of $\theta$ satisfying 
    $$
     \Theta_j(\theta,p,q) \in A,
     $$
     and ordered ordered according to increasing $\|q_l\|$. 
     
     Then for any $s \geq 1$, and any continuous bounded function $f$ on $(\ZZ_j \times \R)^\N$ which depends only first $s$-coordinates of input argument, we have
    \begin{equation}
    \label{eq: main thm time visits}
        \lim_{T \rightarrow \infty} \frac{1}{T} \sum_{\|q_l\| \leq e^T} f \left( \left(\Theta_j(\theta, p_{l+i}, q_{l+i}), \ \log \frac{\| q_{l+i}\|}{\| q_{l+i-1}\|}   \right)_{i \in \Z_{\geq 0}} \right) = \mu^{A}(f).
    \end{equation}

    Moreover, if $\mu^{A*}$ denote the measure on $\ZZ_j \times \R$ obtained by pushforward of  $\mu^A$ under natural projection map onto the first coordinate, then $\mu^{A*}$ satisfies the following properties
    \begin{enumerate}
    \item The pushforward of $\mu^{A*}$ under the natural projection map $\ZZ_j \times \R \rightarrow \ZZ_j$ equals the restriction of $\tnu^j$ to the set $A$.
    \item  The pushforward of $\mu^{A*}$ under the natural projection map $\ZZ_j \times \R \rightarrow \R$ has expectation equals to $1$.
    \end{enumerate}
\end{thm}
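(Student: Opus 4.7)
The plan is to reinterpret the left-hand side of \eqref{eq: main thm time visits} as a Birkhoff average for the first-return map to a suitably induced sub-cross-section of the one-parameter diagonal flow, then invoke the cross-section correspondence and genericity results established in Sections \ref{sec: Results borrowed}--\ref{sec: Cross-section Genericity}. Since $k = r = 1$, the relevant flow is the one-parameter flow $a_t = \operatorname{diag}(e^{t/m} I_m, e^{-t/n} I_n)$ on the space of unimodular lattices (decorated with its $\hZ^d$-component to capture the congruence data). The cross-section $\mathcal{S}$ constructed earlier has the property that the visits of the orbit $t \mapsto a_t \Lambda_\theta$ to $\mathcal{S}$ are in bijection with the best approximates $(p_l, q_l)$ of $\theta$ whose $j$-th coordinate is non-negative, and under this bijection the visit time $t_l$ is, up to a fixed affine factor, $\log \|q_l\|$. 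Consequently the gap $\log(\|q_{l+i}\|/\|q_{l+i-1}\|)$ becomes (proportional to) a return time, and $\Theta_j(\theta, p_l, q_l)$ is read off from the cross-section coordinate of the visit.

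The first step is to pull $A$ back to a sub-cross-section $\mathcal{S}_A \subset \mathcal{S}$ consisting of points whose associated data lies in $A$. The hypothesis \eqref{eq: condition Main thm time visits} together with the JM property of $A$ ensures that $\mathcal{S}_A$ is well-defined (no ambiguity from the $\pm$ symmetry of best approximates), Jordan-measurable with negligible boundary with respect to the natural cross-section measure $\mu_{\mathcal{S}}$, and of positive $\mu_{\mathcal{S}}$-measure. The second step is to form the induced system: let $R_A : \mathcal{S}_A \to \mathcal{S}_A$ be the first-return map under the flow $a_t$ restricted to $\mathcal{S}_A$, and let $\tau_A : \mathcal{S}_A \to \R_{>0}$ be the corresponding return time. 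Standard cross-section theory (Ambrose--Kakutani) provides an $R_A$-invariant probability measure $\mu_A$ on $\mathcal{S}_A$, inherited from $\mu_{\mathcal{S}}$ upon normalization, and Kac's formula gives $\int_{\mathcal{S}_A} \tau_A\, d\mu_A = 1$ once the ambient measures are normalized so that the total mass accounts for one unit of flow time per visit; this will yield the second property of $\mu^{A*}$.

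Define $\mu^A$ on $(\ZZ_j \times \R)^{\N}$ as the pushforward of $\mu_A$ under the map
\[
s \longmapsto \bigl(\Theta_j \circ \Phi(R_A^i s),\ \tau_A(R_A^{i-1} s)\bigr)_{i \geq 0},
\]
where $\Phi : \mathcal{S}_A \to \ZZ_j$ is the coordinate-extraction map of Section \ref{sec: Cross-section Correspondence}, with a natural convention for the $i = 0$ return-time slot. The third step applies the Birkhoff ergodic theorem to the induced system $(\mathcal{S}_A, R_A, \mu_A)$, which is ergodic since the full flow is mixing on the ambient space by Moore's theorem. The genericity hypothesis on $\theta$ (Definition \ref{def: generic type}) is precisely what guarantees cross-section genericity at $\mathcal{S}_A$; the equivalence of Birkhoff and cross-section genericity is the content of Section \ref{sec: Cross-section Genericity}. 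Applied to the cylinder function determined by $f$ (which depends only on the first $s$ coordinates), this yields \eqref{eq: main thm time visits}, with the $1/T$ normalization absorbing the asymptotic density $\#\{l : \|q_l\| \leq e^T\} \sim c_A T$ into the finite, $\theta$-independent limit $\mu^A(f)$.

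The two properties of $\mu^{A*}$ then follow immediately from the construction. The projection to $\ZZ_j$ is the law of $\Phi$ under $\mu_A$, which by the cross-section measure computation coincides with the restriction of $\tnu^j$ to $A$ up to the normalization forced by Kac's formula---consistent with the measure appearing in Theorem \ref{main thm best}. The expectation of the $\R$-marginal equals $\int_{\mathcal{S}_A} \tau_A\, d\mu_A = 1$. The main obstacle I expect is upgrading genericity from the scalar observable $\mathbf{1}_{\mathcal{S}_A}$ to the full cylinder function $f$ depending on $s$ consecutive future visits; this should be handled by observing that the tuple $(R_A^i s, \tau_A(R_A^{i-1} s))_{0 \leq i \leq s-1}$ is itself an observable on $(\mathcal{S}_A, R_A)$, so the Birkhoff averages of $f$ along it equal $\mu^A(f)$ directly. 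The auxiliary condition \eqref{eq: condition Main thm time visits} is exactly what makes the lift from $A$ to $\mathcal{S}_A$ unambiguous and thus what makes $\mu^A$ well-defined.
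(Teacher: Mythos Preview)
Your plan follows the paper's proof closely: pull $A$ back to a sub-cross-section $\tilde A = \tilde\psi^{-1}(A) \subset \BA$, use the first-return map $T_{\tilde A}$ and return time $\tau_{\tilde A}$, push forward the visit measure by the $s$-fold map $\bigl((\tilde\psi,\tau_{\tilde A}),\ldots,(\tilde\psi,\tau_{\tilde A})\circ T_{\tilde A}^{s-1}\bigr)$, and invoke Kolmogorov extension and Kac's formula for the properties of $\mu^{A*}$. The only point that needs correction is your third step. Invoking the Birkhoff ergodic theorem for the induced system $(\mathcal{S}_A, R_A, \mu_A)$ does not do what you need: it yields convergence only for $\mu_A$-a.e.\ starting point, not for the \emph{specific} orbit $a_t\tilde\Lambda_\theta$. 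The paper does not use pointwise ergodic theory here at all. Instead, cross-section genericity (Lemma~\ref{lem:BG implies CG}) gives weak-$*$ convergence $\frac{1}{T}\sum_{t\in N(\tilde\Lambda_\theta,T,\tilde A)}\delta_{a_t\tilde\Lambda_\theta}\to \mu_{\BA}|_{\tilde A}$, and then the crucial input is Result~\ref{res 9}: the maps $\tau_{\tilde A}$ and $T_{\tilde A}$ are \emph{continuous $\mu_{\seda}|_{\tilde A}$-almost everywhere}. This allows one to push forward the weak-$*$ convergence by the $s$-fold observable via the continuous-mapping portion of Theorem~\ref{muJM}. Your remark that ``the tuple is itself an observable, so Birkhoff averages equal $\mu^A(f)$ directly'' is circular without this a.e.\ continuity; once you replace the ergodic-theorem appeal with the a.e.-continuity pushforward, your argument and the paper's coincide. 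Also note that the paper works with the \emph{unnormalized} restriction $\mu_{\BA}|_{\tilde A}$ (this is why the $\ZZ_j$-marginal is exactly $\tnu^j|_A$ and Kac gives expectation $1$ on the nose), so drop the normalization of $\mu_A$ in your Step~2.
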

\begin{rem}
\label{rem: Generic has full measure}
   A matrix \(\theta \in \Mat\) is said to be of \emph{generic type} if the trajectory
\[
\left( 
\begin{pmatrix}
    e^{\frac{t}{m}} I_m & \\
    & e^{-\frac{t}{n}} I_n
\end{pmatrix}
\begin{pmatrix}
    I_m & \theta \\
    & I_n
\end{pmatrix}
\SL_d(\Z) 
\right)_{t \geq 0}
\]
equidistributes with respect to the unique \(\SL_d(\R)\)-invariant probability measure on \(\SL_d(\R)/\SL_d(\Z)\). The condition that \(\theta \in \Mat\) is of generic type is well studied, and it is well known that Lebesgue-almost every \(\theta\) satisfies this condition.
 In fact, there are a wide class of examples of measures other than the Lebesgue measure which gives full measure to above set. The example includes the natural measures defined on limit sets of IFS defined as in \cite{SimmonsWeiss}, e.g., Cantor set or Koch snowflake, or the natural measure on the non-degenerate curves in $\Mat$ defined as in \cite{solanwieser}. Thus, we find that the above theorem holds for almost every point in $\Mat$ with respect to a wide class of measures, other than the Lebesgue measure.
\end{rem}
\begin{rem}
\label{rem: condition explain}
    Note that the condition \eqref{eq: condition Main thm time visits} is needed to select only once between the best approximations $(p,q)$ and their negatives $(-p,-q)$. If both approximations $(p,q)$ and $(-p,-q)$ are considered, ordering according to increasing $\| q\|$ would not be unique.
\end{rem}

\section{Corollaries: Special Case}
\label{sec: Corollaries: Special Case $k=r=1$}
 
In this section, we state further corollaries for the special case $k=r=1$, including a resolution of Question~\ref{Que: Cantor} and other questions posed after Theorem~\ref{thm: CC19}.

\subsection{Generalisation of Cheung-Chevallier Theorem}
Using Theorem~\ref{Main thm time visits}, we can generalise the main theorem of Cheung and Chevallier~\cite{CC19} as follows. 
\begin{cor}
\label{thm:intro 1}

Fix a probability measure $\mu$ on $\Mat$ chosen among the following:
\begin{itemize}
    \item $\mu$ is absolutely continuous with respect to Lebesgue measure;
    \item $\mu$ is the Bernoulli measure supported on the limit sets of IFSs defined as in~\cite{SimmonsWeiss}, e.g., the middle third Cantor set or Koch snowflake;
    \item $\mu$ is the natural measure supported on non-degenerate curves in $\Mat$ as defined in~\cite{solanwieser}, e.g., the Veronese curve.
\end{itemize}

Fix an integer $1 \leq j \leq d$. Let $A \subset \ZZ_j$ be a $\tnu^j$-JM subset (as in Definition~\ref{def JM}) of positive $\tnu^j$-measure, such that
\begin{align}
\label{eq: condition Main thm time visits 1}
    A \subset \left\{(\Lambda, v, \gamma, w) \in \E_d^j \times (\Sphere^m \times \Sphere^n) \times [0,\e] \times \hZ^d : v_j \geq 0 \right\}.
\end{align}

Then, for $\mu$-almost every $\theta$, the following holds.Let \((p_l, q_l) \in \mathbb{Z}^m \times \mathbb{Z}^n\) denote the sequence of best approximates to \(\theta\), defined as in Definition~\ref{def: best k=r=1}, satisfying
\[
\Theta_j(\theta, p_l, q_l) \in A,
\]
and ordered by increasing \(\|q_l\|\). Then, for any $s \geq 0$, the following limits exist and are independent of the choice of $\theta$ and $\mu$:
\begin{align}
    \lim_{l \rightarrow \infty} \frac{1}{l} \log \|q_{l}\|, \label{eq: thm:intro 1 1} \\
    \lim_{l \rightarrow \infty} \frac{1}{l} \log \|p_{l} + \theta q_{l}\|, \label{eq: thm:intro 1 2} \\
    \lim_{l \rightarrow \infty} \frac{1}{l} \sum_{i=1}^l \delta_{\|q_{i+s}\|^n \|p_i + \theta q_i\|^m}. \label{eq: thm:intro 1 3}
\end{align}
Moreover, we still have:
\begin{align}
\label{eq: thm:intro 1 4}
    \lim_{l \rightarrow \infty} \frac{1}{l} \log \|q_{l}\|_{\R^n} 
    = -\frac{m}{n} \lim_{l \rightarrow \infty} \frac{1}{l} \log \|p_{l} + \theta q_{l}\|_{\R^m}.
\end{align}
\end{cor}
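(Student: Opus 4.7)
The plan is to invoke Theorem~\ref{Main thm time visits} after reducing to matrices of generic type. By Remark~\ref{rem: Generic has full measure}, for each of the three admissible classes of \(\mu\) (absolutely continuous, Bernoulli on IFS limit sets, and natural measures on non-degenerate curves), the set of \(\theta \in \Mat\) of generic type has full \(\mu\)-measure, so it suffices to fix such a \(\theta\) and prove that the claimed limits exist with values depending only on \(A\), \(m\), \(n\), and the fixed norms. For this fixed \(\theta\), Theorem~\ref{Main thm time visits} (with \(k=r=1\)) supplies a universal measure \(\mu^A\) on \((\ZZ_j \times \R)^{\N}\) governing the Cesaro-type sum
\[
\frac{1}{T}\sum_{\|q_l\|\le e^T} f\Bigl(\bigl(\Theta_j(\theta, p_{l+i}, q_{l+i}),\,\log\tfrac{\|q_{l+i}\|}{\|q_{l+i-1}\|}\bigr)_{i\ge0}\Bigr) \longrightarrow \mu^A(f),
\]
and this is my workhorse for all four claims.

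Taking \(f\equiv 1\) immediately gives \(\#\{l : \|q_l\|\le e^T\}/T \to C_A := \mu^A(1)\in(0,\infty)\); inverting the counting function shows that the \(L\)-th restricted best approximate satisfies \(\log\|q_L\|/L \to 1/C_A\), which establishes \eqref{eq: thm:intro 1 1}. For \eqref{eq: thm:intro 1 3}, the key observation is the identity
\[
\|q_{l+s}\|^{n}\|p_l+\theta q_l\|^{m}
= \exp\!\Bigl(n\sum_{k=1}^{s}\log\tfrac{\|q_{l+k}\|}{\|q_{l+k-1}\|}\Bigr)\cdot \disp(\theta,p_l,q_l),
\]
which exhibits the integrand as a continuous function of the first \(s+1\) coordinates of the sequence tracked by Theorem~\ref{Main thm time visits}. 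Applying the theorem to a test function built from this expression and then dividing by the counting asymptotic \(L(T)\sim C_A T\) converts the \(T\)-normalized statement into the desired \(l\)-normalized weak convergence to a fixed probability measure on \(\R\).

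For \eqref{eq: thm:intro 1 2} and \eqref{eq: thm:intro 1 4}, the product identity \(\disp(\theta, p_l, q_l) = \|p_l+\theta q_l\|^m \|q_l\|^n\) yields
\[
\frac{\log\|p_l+\theta q_l\|}{l} = \frac{\log\disp(\theta,p_l,q_l)}{ml} - \frac{n}{m}\cdot\frac{\log\|q_l\|}{l},
\]
and combined with \(\disp(\theta,p_l,q_l) \le \e\) and \eqref{eq: thm:intro 1 1} this gives \(\limsup \log\|p_l+\theta q_l\|/l \le -(n/m)/C_A\). Matching the liminf, and hence establishing both \eqref{eq: thm:intro 1 2} and \eqref{eq: thm:intro 1 4}, reduces to the claim \(\log\disp(\theta,p_l,q_l)/l \to 0\); this is the main obstacle of the argument. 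The upper bound is immediate, but since the limit measure can charge arbitrarily small values of \(\disp\) when \(m+n\ge 3\), no pointwise lower bound on \(\disp(\theta,p_l,q_l)\) is available and the lower bound must be obtained distributionally. I would apply Theorem~\ref{Main thm time visits} with test sets of the form \(\{\gamma \le \delta\}\) for shrinking \(\delta\), combined with the gap-size bound \((\|q_{l+1}\|/\|q_l\|)^n \disp(\theta,p_l,q_l) \le M\) inherited from the bounded support of \(\nu_{m,n}\), to control the density of indices with \(\disp(\theta,p_l,q_l) < e^{-\eta l}\); a Borel--Cantelli argument then shows that such exceptional indices cannot persist, giving \(\log\disp(\theta,p_l,q_l)/l \to 0\) and completing the proof.
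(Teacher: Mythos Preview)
Your treatment of \eqref{eq: thm:intro 1 1} and \eqref{eq: thm:intro 1 3} matches the paper exactly: take $f\equiv 1$ and invert the count for the former, and for the latter express $\|q_{l+s}\|^n\|p_l+\theta q_l\|^m$ as a continuous function of the first $s+1$ blocks $(\Theta_j,\log\tfrac{\|q_{l+i}\|}{\|q_{l+i-1}\|})$ and push forward. The reduction to generic $\theta$ via Remark~\ref{rem: Generic has full measure} is also the paper's first move.

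The gap is in your handling of \eqref{eq: thm:intro 1 2}. Your strategy reduces it to showing $\tfrac{1}{l}\log\disp(\theta,p_l,q_l)\to 0$ along \emph{every} $l$, and you propose a Borel--Cantelli argument based on the density of indices with $\disp<e^{-\eta l}$. But Theorem~\ref{Main thm time visits} gives equidistribution of $\disp(\theta,p_l,q_l)$ only for \emph{fixed} thresholds $\delta$, with no rate; it does not let you pass to the moving threshold $\delta=e^{-\eta l}$. The gap-size bound $(\|q_{l+1}\|/\|q_l\|)^n\disp(\theta,p_l,q_l)\le M$ is an \emph{upper} bound on the gap, so small $\disp$ does not force a large gap and there is no telescoping contradiction with $\log\|q_L\|\sim L/C_A$. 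In effect you are implicitly appealing to a logarithm-law-type cusp excursion estimate, which is not supplied by the equidistribution machinery at hand.

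The paper avoids this entirely by proving \eqref{eq: thm:intro 1 2} \emph{directly}, using a dual cross-section: one replaces $\Le$ by $\{(x,y)\in\Sphere^m\times\R^n:\|y\|^n\le\e\}$, so that visits to the corresponding $\BA$ now correspond to best approximates with $\|p+\theta q\|\ge e^{-T}$, and the same counting-and-inverting argument yields the limit for $\tfrac{1}{l}\log\|p_l+\theta q_l\|$. With \eqref{eq: thm:intro 1 1} and \eqref{eq: thm:intro 1 2} both established independently, \eqref{eq: thm:intro 1 4} becomes almost trivial: since both full limits exist, it suffices to check the identity along \emph{any} subsequence, and by absolute continuity of the limiting law for $\disp$ one picks $0<\alpha<\beta$ with $\disp(\theta,p_l,q_l)\in[\alpha,\beta]$ infinitely often, along which $\tfrac{1}{l}\log\disp\to 0$ is immediate. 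This sidesteps the full-sequence claim you were aiming for.
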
 \vspace{0.2in}
\begin{rem}
As explained in Remark~\ref{rem: condition explain}, the condition~\eqref{eq: condition Main thm time visits 1} is required to select only one representative between each pair of best approximations \((p, q)\) and \(({-p}, {-q})\). If both are considered, then ordering by increasing \(\|q\|\) would not yield a unique sequence.
\end{rem}

\begin{rem}
As discussed in Remark~\ref{rem: importance of A}, by choosing the set \(A\) appropriately in Corollary~\ref{thm:intro 1}, one can impose additional constraints on the best approximates:
\begin{itemize}
    \item \emph{Congruence conditions} via the \(\hZ^d\) component,
    \item \emph{Directional constraints} via the \(\Sphere^{m_1} \times \cdots \times \Sphere^{n_r}\) component,
    \item \emph{Quality of approximation} via the \(\R\) component, and
    \item \emph{Relative size constraints} via the \(\E_d^j\) component.
\end{itemize}
Thus, Corollary \ref{thm:intro 1} answers the questions posed after Theorem~\ref{thm: CC19}.
\end{rem}

\begin{rem}
\label{rem: Doeblin Lenstra}
    The authors wish to emphasize that the case $m = n = 1$ of equation~\eqref{eq: thm:intro 1 3} is commonly known as the Jager version of the Doeblin--Lenstra law (see Section~\ref{subsec: Doeblin--Lenstra Law}). For further background, we refer the reader to the excellent surveys~\cites{Iosifescu, IosifescuKraaikamp}.

    We also note that Cheung and Chevallier~\cite{CC19} established equation~\eqref{eq: thm:intro 1 3} only in the case $s = 1$, and only with respect to Lebesgue measure. For $n = 1$ and $s = 0$, the result was previously obtained through the work of Shapira and Weiss~\cite{SW22}. The remaining values of $s$, including the case $s = 0$ for $n \neq 1$ (corresponding to the classical Doeblin--Lenstra setting), are new to this paper. 
\end{rem}
\vspace{0.2in}

\subsection{Application to Fractals}

Corollary~\ref{thm:intro 1} immediately implies the following result, which answers Question~\ref{Que: Cantor} and extends the Doeblin-Lenstra law to the middle-third Cantor set.

\begin{cor}
\label{cor: cantor}
Let $\mu$ denote the restriction of the $\log 2 / \log 3$-dimensional Hausdorff measure to the middle-third Cantor set. Then, for $\mu$-almost every $\theta$, the following holds. Let $(p_l, q_l)_{l \in \mathbb{N}}$ denote the numerators and denominators of the convergents in the continued fraction expansion of $\theta$. Then
\[
    \lim_{l \rightarrow \infty} \frac{1}{l} \log q_l = \frac{\pi^2}{12 \log 2}.
\]
Moreover, for every bounded continuous function $f : [0,1] \to \mathbb{R}$, we have
\[
\lim_{l \to \infty} \frac{1}{l} \sum_{j=1}^l f\bigl( q_j |\theta q_j - p_j| \bigr) = \int_0^1 f(z)\, d\nu(z),
\]
where $\nu$ is the probability measure on $[0,1]$ as defined in~\eqref{eq: def measure Doeblin Lenstra}.
\end{cor}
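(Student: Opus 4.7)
The plan is to deduce Corollary \ref{cor: cantor} as an immediate specialization of Corollary \ref{thm:intro 1} to the one-dimensional case $m=n=1$ (so $k=r=1$ and $d=2$). First I would recall the classical fact that for $\theta \in \mathbb{R}$ the best approximations in the sense of Definition \ref{def: best k=r=1} are, up to the sign ambiguity $\pm(p,q)$, precisely the pairs of numerators and denominators of the continued fraction convergents of $\theta$, ordered by increasing $|q_l|$. This ensures that the sequence $(p_l, q_l)$ in the statement of Corollary \ref{cor: cantor} matches the one produced by Corollary \ref{thm:intro 1}.

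Next I would verify the hypothesis of Corollary \ref{thm:intro 1}. The middle-third Cantor set is the attractor of the IFS $\{x \mapsto x/3,\; x \mapsto x/3 + 2/3\}$, and the normalized $\log 2/\log 3$-dimensional Hausdorff measure $\mu$ on it coincides with the Bernoulli measure associated to the uniform probability vector $(1/2,1/2)$. This falls exactly into the class treated by \cite{SimmonsWeiss}, so the second bullet of the hypothesis of Corollary \ref{thm:intro 1} is satisfied.

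I would then choose a $\tnu^j$-JM set $A \subset \ZZ_j$ of positive $\tnu^j$-measure satisfying \eqref{eq: condition Main thm time visits 1} — the simplest choice being $A = \{(\Lambda, v, \gamma, w) : v_j \geq 0\}$, which merely removes the $\pm(p,q)$ ambiguity and retains all genuine convergents. Applying Corollary \ref{thm:intro 1} then gives that the limits \eqref{eq: thm:intro 1 1} and \eqref{eq: thm:intro 1 3} (with $s=0$ for the Doeblin--Lenstra statement) exist for $\mu$-almost every $\theta$, and moreover that their values are \emph{independent} of both $\theta$ and the choice of measure among those allowed in the hypothesis. Since Lebesgue measure is itself on the permitted list (first bullet), these limiting values must coincide with their Lebesgue-almost-everywhere counterparts, which are the classical L\'evy constant $\pi^2/(12\log 2)$ and the Doeblin--Lenstra measure $\nu$ of \eqref{eq: def measure Doeblin Lenstra} established by Bosma--Jager--Wiedijk.

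The main subtlety — not really an obstacle, since all the hard dynamical content has already been absorbed in Corollary \ref{thm:intro 1} — is simply the bookkeeping to identify the universal constant and limiting measure produced by Corollary \ref{thm:intro 1} with their classical Lebesgue values. This identification is forced automatically by the presence of the absolutely-continuous case in the first bullet of the hypothesis, so no additional computation is required beyond invoking the classical one-dimensional L\'evy--Khintchine and Doeblin--Lenstra theorems to name the otherwise anonymous constant and measure.
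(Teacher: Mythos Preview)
Your proposal is correct and follows exactly the route the paper takes: the paper simply states that Corollary~\ref{cor: cantor} is an immediate consequence of Corollary~\ref{thm:intro 1}, and you have faithfully unpacked that deduction by specializing to $m=n=1$, invoking the Simmons--Weiss framework for the Cantor measure, and identifying the universal limits with their classical Lebesgue values.
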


\vspace{0.2in}

\subsection{Application to Polynomial Approximation}

Another application of Corollary~\ref{thm:intro 1} arises by choosing $m=1$ and taking the measure $\mu$ to be the pushforward of Lebesgue measure under the map $x \mapsto (x, \ldots, x^n) \in \M_{1 \times n}(\R)$. This leads to a L\'evy–Khintchine-type theorem for polynomial approximation, a connection suggested to us by Yann Bugeaud, whom we gratefully acknowledge.

We briefly explain the setup. For an integer polynomial \( P(X) \), the height \( H(P) \) is defined as the maximum of the moduli of the coefficients of the non-zero powers of \( X \), i.e.,
\[
H(c_0 + c_1X + \cdots + c_sX^s) = \max\{ |c_1|, \ldots, |c_s| \}.
\]
Let \( \theta \in \R \) and \( s \in \N \). A polynomial \( P(X) \) of degree at most \( s \) is said to be a \emph{best approximating integer polynomial} for \( \theta \) if it satisfies:
\begin{align*}
    |P(\theta)| &< |Q(\theta)| \quad \text{for all integer polynomials \( Q \) of degree \( \leq s \) with \( H(Q) < H(P) \)}, \\
    |P(\theta)| &\leq |Q(\theta)| \quad \text{for all integer polynomials \( Q \) of degree \( \leq s \) with \( H(Q) = H(P) \)}.
\end{align*}

In this case, Corollary~\ref{thm:intro 1} implies the following:

\begin{cor}
\label{cor:poly-approx}
For all \( s \in \N \) and for Lebesgue-almost every \( \theta \in \R \), the following holds. Let \( \{P_l(X)\}_{l \geq 1} \) denote the sequence of best approximating integer polynomials of degree at most \( s \), ordered according to increasing height \( H(P_l) \). Then the following limits exist and are independent of \( \theta \):
\[
\lim_{l \rightarrow \infty} \frac{\log H(P_l)}{l}, \qquad
\lim_{l \rightarrow \infty} \frac{\log |P_l(\theta)|}{l}.
\]
\end{cor}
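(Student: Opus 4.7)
The plan is to apply Corollary~\ref{thm:intro 1} with parameters $m=1$, $n=s$, $k=r=1$, the norms being absolute value on $\R$ and $\ell^\infty$ on $\R^s$, and with the measure $\mu$ taken to be the pushforward of Lebesgue measure on $\R$ under the Veronese map $\Phi: x \mapsto (x, x^2, \ldots, x^s)$. The Veronese curve is the standard example of a non-degenerate curve in $\M_{1 \times s}(\R)$, so $\mu$ falls within the third class of admissible measures listed in that corollary.

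The underlying dictionary is the identification
$$P(X) = c_0 + c_1 X + \cdots + c_s X^s \ \longleftrightarrow \ (p,q) = (c_0,(c_1,\ldots,c_s)) \in \Z \times \Z^s.$$
By the chosen norms, one has $\|q\|_\infty = H(P)$ and $p + \Phi(\theta)\,q = P(\theta)$, so the asserted limits for $\log H(P_l)/l$ and $\log |P_l(\theta)|/l$ become precisely the limits~\eqref{eq: thm:intro 1 1} and~\eqref{eq: thm:intro 1 2}. For the set $A$ in Corollary~\ref{thm:intro 1}, I would select a subset imposing a sign convention (for instance $c_s \geq 0$) that satisfies~\eqref{eq: condition Main thm time visits 1}, ensuring that each polynomial and its negative are counted only once.

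The main step requiring care is the matching of definitions: I need that, for $\mu$-almost every $\theta$, Bugeaud's best approximating integer polynomials correspond bijectively to best approximations of $\Phi(\theta)$ in the sense of Definition~\ref{def: best k=r=1}. A potential discrepancy would arise only if two distinct (up to sign) integer polynomials $P,Q$ of degree $\leq s$ had the same height and satisfied $|P(\theta)|=|Q(\theta)|$; in that case $(P\pm Q)(\theta)=0$, so $\theta$ would be a root of a nonzero integer polynomial of degree $\leq s$. Algebraic numbers of bounded degree form a countable set, and such a set is $\mu$-null because $\mu$ is the Lebesgue-pushed-forward measure along a smooth curve. Hence for $\mu$-a.e. $\theta$ no such tie occurs and the two notions coincide exactly. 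Primitivity of a best approximating polynomial is automatic: a nontrivial common factor $d>1$ among its coefficients would produce an integer polynomial of strictly smaller height and strictly smaller absolute value at $\theta$, contradicting the minimality property defining $P$. With this matching established, the conclusion follows immediately by invoking~\eqref{eq: thm:intro 1 1} and~\eqref{eq: thm:intro 1 2}.
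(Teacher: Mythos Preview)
Your proposal is correct and follows exactly the route the paper takes: Corollary~\ref{cor:poly-approx} is deduced from Corollary~\ref{thm:intro 1} with $m=1$, $n=s$, and $\mu$ the Lebesgue pushforward along the Veronese curve, which is the paper's stated example of a non-degenerate curve. The paper gives no further argument beyond this reference, whereas you supply the verification that Bugeaud's notion of best approximating polynomial coincides with Definition~\ref{def: best k=r=1} away from algebraic $\theta$ of degree $\le s$; this matching argument is correct and is a welcome addition of detail.
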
\vspace{0.2in}

\subsection{Distribution of determinants}

The study of the determinants of matrices whose entries are best approximates is classical, see for instance the important work of Lagarias \cites{Lagarias, Lagarias2}, and also the more recent works of Moshchevitin \cite{Moshchevitin2007}.  Theorem \ref{Main thm time visits} gives the following general result about the limiting distribution of determinants. 

\begin{cor}
\label{cor: determinant}
    Assume that $k=r=1$. Fix $\theta \in \Mat$ of generic type (defined as in Definition \ref{def: generic type}). Fix an integer $1 \leq j \leq d$. Fix a $\tnu^j$-JM subset $A \subset \ZZ_j$ (defined as in Definition~\ref{def JM}) of positive $\tnu^j$-measure, such that
    \begin{align}
    \label{eq: condition Main thm time visits 2}
        A \subset \{(\Lambda, v, \gamma, w) \in \E_d^j \times (\Sphere^m \times \Sphere^n) \times [0,\e] \times \hZ^d: v_j \geq 0\}.
    \end{align}
    Then there exists a measure $\mu^{A, \Z}$ (independent of $\theta$) on $\Z$ such that the following holds for all bounded functions on $\Z$.
    \begin{align}
    \label{eq: cor: determinant}
      \lim_{l \rightarrow \infty} \frac{1}{l} \sum_{i=1}^l f\left( \det \begin{pmatrix}
            p_{i} & \cdots & p_{i+d-1}\\ q_i & \cdots & q_{i+d-1}
        \end{pmatrix} \right) = \mu^{A, \Z}(f).
    \end{align}
\end{cor}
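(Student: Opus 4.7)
\emph{Proof proposal.} The plan is to realise the determinant as a continuous function of the observables produced by Theorem~\ref{Main thm time visits}, and then to push $\mu^A$ forward under this function.

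First, I would observe that since $\det\begin{pmatrix} I_m & \theta \\ 0 & I_n\end{pmatrix}=1$, the determinant in the statement coincides with $\det(v_l,\dots,v_{l+d-1})$, where $v_{l+i}:=(p_{l+i}+\theta q_{l+i},\,q_{l+i})\in\Lambda_\theta$ is the primitive lattice vector corresponding to the best approximate $(p_{l+i},q_{l+i})$. In the $k=r=1$ setting, writing $(a_i,b_i):=\proj(\theta,p_{l+i},q_{l+i})\in\Sphere^m\times\Sphere^n$, $\alpha_i:=\|p_{l+i}+\theta q_{l+i}\|$ and $\beta_i:=\|q_{l+i}\|$, we have $v_{l+i}=(\alpha_i a_i,\beta_i b_i)$, and the generalised Laplace expansion across the first $m$ rows versus the last $n$ rows yields
\[
\det(v_l,\dots,v_{l+d-1})\;=\;\sum_{\substack{I\subset\{0,\dots,d-1\}\\|I|=m}}\!\epsilon(I)\Bigl(\prod_{i\in I}\alpha_i\Bigr)\Bigl(\prod_{i\in I^c}\beta_i\Bigr)\det\bigl((a_i)_{i\in I}\bigr)\det\bigl((b_i)_{i\in I^c}\bigr).
\]

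The core technical step---and what I expect to be the main obstacle---is to show that this expression is a \emph{continuous} function of precisely the observables furnished by Theorem~\ref{Main thm time visits}: the projections $(a_i,b_i)$, the errors $\disp_i=\alpha_i^m\beta_i^n$, and the consecutive log-ratios $\log(\|q_{l+i}\|/\|q_{l+i-1}\|)$. The subtlety is that the absolute scale $\beta_0=\|q_l\|$ is \emph{not} a measurable coordinate in the dynamical scheme. Parametrising $\beta_i=\beta_0\tau_i$ with $\tau_i:=\exp\bigl(\sum_{k=1}^i\log(\|q_{l+k}\|/\|q_{l+k-1}\|)\bigr)$ and substituting $\alpha_i=\disp_i^{1/m}\beta_i^{-n/m}$, the scalar prefactor for each $I$ becomes
\[
\prod_{i\in I}\alpha_i\cdot\prod_{i\in I^c}\beta_i\;=\;\beta_0^{-n|I|/m+|I^c|}\cdot\prod_{i\in I}\bigl(\disp_i^{1/m}\tau_i^{-n/m}\bigr)\cdot\prod_{i\in I^c}\tau_i,
\]
and the exponent of $\beta_0$ vanishes identically because $|I|=m$ and $|I^c|=n$. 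This cancellation---special to the case $k=r=1$---shows that the determinant is given by an algebraic (hence continuous) function $F$ on the cylinder subspace of $(\ZZ_j\times\R)^\N$ consisting of the first $d$ coordinates, whose value on genuine best-approximation data is the integer determinant.

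To finish, I would invoke Theorem~\ref{Main thm time visits} with the continuous bounded function $\tilde f\circ F$, where $\tilde f:\R\to\R$ is a piecewise-linear extension of the given bounded $f:\Z\to\R$. Since every empirical measure is supported on $\Z$ and $\Z$ is closed in $\R$, so is the pushforward $F_*\mu^A$; the limit therefore depends only on $f|_\Z$ and defines the desired measure on $\Z$. Finally, passing from the $T$-normalisation of Theorem~\ref{Main thm time visits} to the $l$-normalisation in~\eqref{eq: cor: determinant} uses the L\'evy--Khintchine asymptotic~\eqref{eq: cor 3 to main thm} (available since $\theta$ is of generic type), which introduces only a positive multiplicative constant that can be absorbed into the definition of $\mu^{A,\Z}$.
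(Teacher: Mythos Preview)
Your approach is essentially identical to the paper's: realise the determinant as a continuous function of the observables from Theorem~\ref{Main thm time visits} via the scale-cancellation, and push $\mu^A$ forward. The paper achieves the cancellation more directly by left-multiplying $(v_l,\dots,v_{l+d-1})$ by the determinant-one diagonal matrix $\mathrm{diag}(\|q_l\|^{n/m}I_m,\|q_l\|^{-1}I_n)$ rather than via your Laplace expansion, and in fact glosses over the $T$-to-$l$ renormalisation that you handle explicitly.
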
 \vspace{0.2in}

\begin{rem}
    
During a recent talk of Uri Shapira, we learned that Shapira and Weiss have independently proved the result for $n = 1$ and $A = \ZZ_j$. This is to appear in a forthcoming paper of theirs.

\end{rem}

\subsection{Congruence condition}
Diophantine approximation with congruence conditions in one dimension was studied systematically in the work of Sz\"{u}sz and his co-authors (\cites{SzuszActa, HartmanSzusz, Szusz, Szusz58}. More recently, Borda \cite{borda2023} has established very general limiting theorems, including functional central limit theorems in one dimensional Diophantine approximation with congruence constraints. The higher dimensional problem has seen some recent activity using methods from dynamics and the geometry of numbers, cf. \cites{AlamGhoshYu, AlamGhoshHan, SW22}.

Theorem~\ref{Main thm time visits} contributes further to this line of inquiry by analyzing the distribution of
$$
\left( \begin{pmatrix}
    p_i & \cdots & p_{i+l} \\
    q_i & \cdots & q_{i+l}
\end{pmatrix}  \mod N \right)_{i \in \N},
$$
where \((p_i, q_i)\) denotes the sequence of best approximations to \(\theta\). It shows that for every \(\theta\) of generic type—in particular, for Lebesgue almost every \(\theta\)—the sequence equidistributes with respect to a common limiting measure. For further results in the special case \(m=n=1\) and \(l=d\), see \cite{borda2023}.

In a forthcoming work \cite{aggarwalghosh2025c}, we establish central limit theorems for multi-dimensional Diophantine approximation with congruence constraints.

\section{Notations}
\label{sec: Notation}

\subsection{Norms}
\label{sec: Norms}
By re-scaling the norms, without loss of generality, we may assume that the norms \( \|.\| \) on each of \( \R^{m_i} \) satisfy
\[
\{x \in \R^{m_i} : \|x\|_\el < 1\} \subset \{x \in \R^{m_i} : \|x\| < 1\}
\]
for all \( i \). Similarly, we may assume that the norms \( \|.\| \) on each of \( \R^{n_j} \) satisfy
\[
\min_{x \in \Z^{n_j} \setminus \{0\}} \|x\| > 1.
\]

Let us fix \( \e > 0 \) large enough so that
\[
m_{\R^d}(\BB_{\e}) > 2^d,
\]
where for any \( \delta > 0 \), we define
\begin{align}
    \label{eq:def C_r}
    \BB_\delta := B_{\delta^{1/m_1}}^{m_1} \times B_1^{m_2} \times \cdots \times B_1^{n_r},
\end{align}
where \( B_\delta^l \) denotes the ball of radius \( \delta \) centered at the origin in \( \R^l \), with respect to the chosen norm on \( \R^l \).

\subsection{Convergence of Measures on lcsc spaces}
\label{Tight Convergence}
This short section is taken from our paper \cite{aggarwalghosh2024joint}. We recall some definitions and facts from measure theory that will be used in this paper. Let $X$ be a locally compact, second countable, Hausdorff topological space. Let $\mathcal{B}_X$ denote the Borel $\sigma$-algebra for the underlying topology and let $\mathcal{M}(X)$ denote the collection of finite regular Borel measures on $X$. For $\nu \in \mathcal{M}(X)$ and $f \in C_b(X)$, $\nu(f)$ will denote the integral $\int_X f \, d\nu$. We will use the \textit{tight topology} on $\mathcal{M}(X)$ for which convergence $\nu_l \rightarrow \nu$ is defined by either of the following equivalent requirements (see \cite[$\S 5$, Prop.~$9$]{Bou04b} for the equivalence):
\begin{itemize}
    \item[(i)] for all $f \in C_b(X)$, $\nu_l(f) \rightarrow \nu(f)$,
    \item[(ii)] for any compactly supported continuous function $f: X\rightarrow \R$, $\nu_l(f) \rightarrow \nu(f)$ and $\nu_l(X) \rightarrow \nu(X)$.
\end{itemize}
While this convergence is not equivalent to weak$-^*$ convergence, the two notions coincide when all measures involved are probability measures.

\begin{defn}
\label{def JM}
    Let $X$ be a locally compact second countable space and $\nu \in \mathcal{M}(X)$. We say that $E \in \mathcal{B}_X$ is \textit{Jordan measurable with respect to $\nu$} (abbreviated $\nu-$JM) if $\nu(\partial_X(E))=0$.
\end{defn}

Note that the following lemma is immediate.

\begin{lem}
\label{JMintersect}
    If $E,F \subset X$ are $\nu$-JM, then $E \cap F$ is also a $\nu$-JM subset of $X$.
\end{lem}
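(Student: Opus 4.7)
The plan is to reduce the statement to the standard point-set topology fact that $\partial_X(E \cap F) \subset \partial_X E \cup \partial_X F$, and then to invoke monotonicity and subadditivity of $\nu$ together with the hypothesis $\nu(\partial_X E) = \nu(\partial_X F) = 0$. Both boundaries are closed, hence Borel, so there is no measurability issue.

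First I would verify the inclusion $\partial_X(E \cap F) \subset \partial_X E \cup \partial_X F$. Writing the boundary as $\partial_X A = \overline{A} \cap \overline{X \setminus A}$, monotonicity of closure gives $\overline{E \cap F} \subset \overline{E} \cap \overline{F}$, while the identity $X \setminus (E \cap F) = (X \setminus E) \cup (X \setminus F)$ combined with the fact that closure commutes with finite unions yields $\overline{X \setminus (E \cap F)} = \overline{X \setminus E} \cup \overline{X \setminus F}$. Intersecting and distributing,
\[
\partial_X(E \cap F) \subset \bigl(\overline{E} \cap \overline{F}\bigr) \cap \bigl(\overline{X \setminus E} \cup \overline{X \setminus F}\bigr) \subset \bigl(\overline{E} \cap \overline{X \setminus E}\bigr) \cup \bigl(\overline{F} \cap \overline{X \setminus F}\bigr) = \partial_X E \cup \partial_X F.
\]

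Finally, applying monotonicity and subadditivity of $\nu$ gives $\nu(\partial_X(E \cap F)) \leq \nu(\partial_X E) + \nu(\partial_X F) = 0$, which is precisely the $\nu$-JM condition for $E \cap F$. There is no substantive obstacle: the lemma is a purely formal consequence of the definition together with the elementary topological inclusion. The only mild point to watch is that closure commutes with finite unions but only contains the corresponding intersection in one direction, which is exactly what makes the inclusion (and not an equality) the right statement to exploit.
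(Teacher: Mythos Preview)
Your argument is correct: the inclusion $\partial_X(E\cap F)\subset\partial_X E\cup\partial_X F$ together with subadditivity of $\nu$ is exactly the standard one-line justification. The paper in fact gives no proof at all, simply declaring the lemma ``immediate,'' so your write-up is just an explicit unpacking of what the authors left to the reader.
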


The importance of $\nu$-JM sets is that it captures tight convergence to $\nu$.

\begin{thm}[See {\cite[Thms.~2.1 $\&$ 2.7]{Bil68}} or {\cite[Chap.~4]{Bou04a}}]
\label{muJM}
     If $\nu_l, \nu \in \mathcal{M}(X)$, then $\nu_l \rightarrow \nu$ tightly if and only if for any $\nu$-JM set $E$ one has $\nu_l(E) \rightarrow \nu(E)$.

    Moreover, if $Y$ is also a locally compact second countable space and $\psi: X \rightarrow Y$ is a measurable function, then the push-forward map $\psi_*: \mathcal{M}(X)\rightarrow \mathcal{M}(Y)$ is continuous at a measure $\mu \in \mathcal{M}(X)$ (with respect to the tight topology) provided that $\psi$ is continuous $\nu$-almost everywhere.
\end{thm}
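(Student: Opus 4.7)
The plan is to prove the classical Portmanteau characterization, adapted to the \emph{tight} topology on $\mathcal{M}(X)$ for finite (not necessarily probability) measures. Since $X$ is locally compact, second countable, and Hausdorff, it is metrizable; I fix a compatible metric $d$ once and for all, so that Urysohn-type constructions are available.

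For the forward implication, assume $\nu_l \to \nu$ tightly. Taking $E = X$ (whose boundary is empty) gives $\nu_l(X)\to \nu(X)$, which I will need later to control total mass. For a general $\nu$-JM set $E$, I would construct the bounded continuous Urysohn sandwich
\[
g_\epsilon(x) = \min\bigl(1,\, d(x,E^c)/\epsilon\bigr), \qquad f_\epsilon(x) = \max\bigl(0,\, 1 - d(x,\overline{E})/\epsilon\bigr),
\]
so that $g_\epsilon \leq \ind_E \leq f_\epsilon$ and, as $\epsilon \searrow 0$, $g_\epsilon \nearrow \ind_{E^\circ}$ while $f_\epsilon \searrow \ind_{\overline{E}}$. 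Tight convergence gives $\nu_l(g_\epsilon) \to \nu(g_\epsilon)$ and $\nu_l(f_\epsilon)\to \nu(f_\epsilon)$, so
\[
\nu(g_\epsilon) \leq \liminf_{l} \nu_l(E) \leq \limsup_l \nu_l(E) \leq \nu(f_\epsilon).
\]
Monotone convergence sends both extremes to $\nu(E^\circ) = \nu(\overline{E}) = \nu(E)$, using the hypothesis $\nu(\partial E) = 0$.

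For the converse, I would take the alleged equality $\nu_l(E)\to \nu(E)$ on all $\nu$-JM sets and verify $\nu_l(f) \to \nu(f)$ for arbitrary $f \in C_b(X)$. After splitting into positive and negative parts and translating/truncating, I may assume $0 \leq f \leq M$, and apply the layer-cake identity
\[
\nu_l(f) = \int_0^M \nu_l(\{f > t\})\,dt.
\]
For every $t$, the boundary $\partial\{f > t\}$ is contained in $\{f = t\}$, and the level sets $\{f=t\}$ are pairwise disjoint, so $\nu(\{f=t\}) > 0$ for at most countably many $t$. Thus $\{f > t\}$ is $\nu$-JM for Lebesgue-a.e.\ $t$, and the hypothesis yields $\nu_l(\{f>t\}) \to \nu(\{f>t\})$ for a.e.\ $t$. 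The dominating bound for the dominated-convergence step is $\sup_l \nu_l(X) < \infty$, guaranteed by the case $E = X$ above.

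For the pushforward statement, assume $\nu_l \to \mu$ tightly and let $D$ be the discontinuity set of $\psi$, with $\mu(D)=0$. For $g \in C_b(Y)$ the composition $h := g \circ \psi$ is bounded; the key observation is that $\partial\{h \leq t\} \subset D \cup \{h = t\}$, since at any point $x \notin D$ where $h(x) < t$ (respectively $>t$) continuity of $\psi$ makes $\{h<t\}$ (respectively $\{h>t\}$) open near $x$. Hence $\mu(\partial\{h\leq t\}) \leq \mu(\{h = t\})$, which vanishes for all but countably many $t$. I then rerun the layer-cake argument verbatim: for a.e.\ $t$ the set $\{h \leq t\}$ is $\mu$-JM, the first half of the theorem (already proved) supplies $\nu_l(\{h\leq t\}) \to \mu(\{h \leq t\})$, and dominated convergence gives $\nu_l(h) \to \mu(h)$, i.e.\ $(\psi_*\nu_l)(g) \to (\psi_*\mu)(g)$.

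No single step is genuinely deep -- this is a classical result -- but the delicate point I would watch is the interplay between convergence of total mass and of integrals against bounded continuous test functions: both appeals to dominated convergence in the layer-cake argument silently use uniform boundedness of $(\nu_l(X))_l$, which is precisely the content beyond vague convergence that distinguishes the tight topology. The other subtlety is remembering that for the pushforward result one only needs $\psi$ continuous $\mu$-a.e., not everywhere, which is what makes the statement useful for the $\nu^j$-JM sets appearing in Corollary~\ref{cor: Counting Best Approximates with Constraints}.
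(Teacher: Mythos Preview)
The paper does not prove this statement at all: it is quoted as a classical fact with references to Billingsley and Bourbaki, and no argument is given. Your sketch is a correct rendition of the standard Portmanteau/layer-cake proof, and the two subtleties you flag (uniform control of total mass for the dominated-convergence step, and the inclusion $\partial\{h\le t\}\subset D\cup\{h=t\}$ for a.e.-continuous $h$) are exactly the points that need care when passing from probability measures to general finite measures in the tight topology.
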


\subsection{Real Setup}
Let $G= \SL_d(\R)$, $\Gamma= \SL_d(\Z)$ and $\X \backsimeq G/\Gamma$. We will identify $\X$ with the space of all unimodular lattices in $\R^d$, via the isomorphism $g\Gamma \mapsto g\Z^d$. We denote by $\mu_{\X}$ the unique $G$-invariant Haar-Siegel probability measure on $G$. 

For \( t = (t_1, \ldots, t_{k+r-1}) \in \R^{k+r-1} \), we define the diagonal matrix \( a_t \in G \) by
\begin{equation}
    \label{defatt}
    a_t = \begin{pmatrix}
        e^{t_1} I_{m_1} & & & & & & \\
        & \ddots & & & & & \\
        & & e^{t_k} I_{m_k} & & & & \\
        & & & e^{-t_{k+1}} I_{n_1} & & & \\
        & & & & \ddots & & \\
        & & & & &  e^{-t_{k+r}} I_{n_r}
    \end{pmatrix},
\end{equation}
where
\[
    t_{k+r} = \frac{m_1 t_1 + \cdots + m_k t_k - n_1 t_{k+1} - \cdots - n_{r-1} t_{k+r-1}}{n_r}.
\]\\

Now we borrow some notation from Section 8 of \cite{SW22}. Given a unimodular lattice $\Lambda$, we define the set of all primitive vectors in $\Lambda$ by $\Lambda_\prim$. Given a subset $W \subset \R^d$ and $l \geq 1$, we define 
\begin{equation}
    \label{defX(W,l)}
    \X(W,l) := \{\Lambda: \# (\Lambda_{\prim} \cap W) \geq l\}.
\end{equation}
For $l = 1$, we omit $l$ and denote 
\begin{equation}
    \label{defX(W)}
    \X(W):= \X(W,1).
\end{equation}
We also define 
\begin{equation}
    \label{defXsharp}
    \X^\sharp(W):= \X(W) \setminus \X(W,2).
\end{equation}
There is a natural map 
\begin{equation}
    \label{defv-unique}
    \vector: \X^\sharp(W) \rightarrow W \text{,  defined by  } \{\vector(\Lambda)\} =\Lambda_\prim \cap W.
\end{equation}
With this notation, we are now ready to state the following results from \cite{SW22}.
\begin{lem}[{\cite[Lem.~8.1]{SW22}}]
\label{SW22 Lem 8.1}
    Let $W \subset  \R^d$ be a compact set, $V \subset W$ a relatively open subset and $l \geq 1$ an integer. 
    \begin{itemize}
        \item The set $\X(W,l)$ is closed in $\X$.
        \item The set $\X^{\sharp}(W) \cap \X(V)$ is open in $\X(W)$.
        \item The map $\vector : \X^\sharp(W) \rightarrow W$ is continuous.
    \end{itemize}
\end{lem}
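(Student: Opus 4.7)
The plan is to establish the three items in the order (1), (3), (2), so that the latter two use the first. The entire argument rests on one observation about lattices: if $\Lambda_n \to \Lambda$ in $\X$, write $\Lambda_n = g_n \Z^d$ and $\Lambda = g \Z^d$ with $g_n \to g$ in $G$. Then for any bounded sequence $v_n \in \Lambda_n$, the integer coordinates $w_n := g_n^{-1} v_n \in \Z^d$ also form a bounded sequence; by discreteness of $\Z^d$, every subsequence has a further subsequence along which $w_n$ is eventually constant, equal to some $w \in \Z^d$, and $v_n \to gw \in \Lambda$. Moreover primitivity of $w_n$ passes to primitivity of $w$, since eventual equality preserves primitivity.

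For (1), suppose $\Lambda_n \to \Lambda$ with each $\Lambda_n \in \X(W,l)$. Choose $l$ distinct primitive vectors $v_n^{(1)}, \ldots, v_n^{(l)} \in (\Lambda_n)_{\prim} \cap W$. By compactness of $W$, pass to a subsequence so $v_n^{(i)} \to v^{(i)} \in W$ for every $i$. The observation above produces corresponding integer-coordinate limits $w^{(i)} \in \Z^d$, each primitive, with $v^{(i)} = gw^{(i)} \in \Lambda \cap W$. The eventual-stabilization of coordinates also forces the distinct integer vectors $w_n^{(i)}$ to limit to distinct integer vectors $w^{(i)}$, so the $v^{(i)}$ are also distinct. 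Hence $\Lambda \in \X(W,l)$, proving closedness.

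For (3), take $\Lambda_n \to \Lambda$ in $\X^\sharp(W)$. Any subsequence of $(\vector(\Lambda_n))$ admits a further convergent subsequence by compactness of $W$; its limit is a primitive vector of $\Lambda$ lying in $W$, hence must equal the unique such vector $\vector(\Lambda)$. Since every subsequence has this property, $\vector(\Lambda_n) \to \vector(\Lambda)$. For (2), observe that (1) implies $\X(W,2)$ is closed in $\X$, so $\X^\sharp(W) = \X(W) \setminus \X(W,2)$ is open in $\X(W)$. On $\X^\sharp(W)$ one has the identity $\X^\sharp(W) \cap \X(V) = \vector^{-1}(V)$, which is open by (3) together with the fact that $V$ is relatively open in $W$. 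Combining the two, $\X^\sharp(W) \cap \X(V)$ is open in $\X(W)$.

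The only step that requires real care is the opening observation: specifically, the reliance on the discreteness of $\Z^d$ to force coordinate stabilization, and the consequent fact that two sequences of distinct lattice points cannot have the same limit without their integer coordinates becoming equal. Everything else is formal topology: compactness extracts limits, continuity of $\vector$ pulls back open sets, and closedness of $\X(W,2)$ gives openness of $\X^\sharp(W)$.
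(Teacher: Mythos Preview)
Your proof is correct. Note that the paper does not prove this lemma itself; it is quoted from \cite{SW22} as Lemma~8.1 there, so there is no in-paper argument to compare against. Your approach---lifting a convergent sequence $\Lambda_n \to \Lambda$ to representatives $g_n \to g$ in $G$, then using boundedness of $g_n^{-1}v_n \in \Z^d$ to force eventual stabilization of integer coordinates---is the standard one and is exactly how such statements are proved; the remaining deductions (distinctness, primitivity, the identity $\X^\sharp(W)\cap\X(V)=\vector^{-1}(V)$ on $\X^\sharp(W)$) are all handled cleanly.

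One small point worth making explicit, since you flag it yourself: the existence of representatives $g_n \to g$ when $\Lambda_n \to \Lambda$ in $\X$ uses that the projection $G \to G/\Gamma$ admits local continuous sections, which is standard for Lie groups modulo discrete subgroups. You use this implicitly and it is unproblematic, but since your final paragraph singles out the ``opening observation'' as the only delicate step, it would not hurt to say one word about why such lifts exist.
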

\begin{lem}[{\cite[Lem.~8.2]{SW22}}]
\label{Sw22 Lem 8.2}
    Let $W \subset \R^d$ be an open set. For any $l \geq 1$, $\X(W,l)$ is open in $\X$.
\end{lem}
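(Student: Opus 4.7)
\bigskip

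\noindent \textbf{Proof proposal for Lemma~\ref{Sw22 Lem 8.2}.}
The plan is to verify openness pointwise: given $\Lambda_0 \in \X(W,l)$, produce an explicit neighborhood of $\Lambda_0$ in $\X$ contained in $\X(W,l)$. By definition of $\X(W,l)$, there exist $l$ distinct primitive vectors $v_1,\dots,v_l \in \Lambda_0 \cap W$. Since $W$ is open and the $v_i$ are distinct, we can choose pairwise disjoint open balls $U_1,\dots,U_l \subset W$ with $v_i \in U_i$. The entire argument will then reduce to showing that the $v_i$ can be ``continuously followed'' as the lattice is perturbed.

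To implement this, I would write $\Lambda_0 = g_0 \Z^d$ for some $g_0 \in G$, so that $v_i = g_0 w_i$ for unique primitive integer vectors $w_1,\dots,w_l \in \Z^d_{\prim}$. For any $g \in G$ close to $g_0$, the perturbed lattice $\Lambda = g\Z^d$ contains the vectors $g w_i$, and these are primitive in $\Lambda$ because $g \colon \Z^d \to \Lambda$ is a lattice isomorphism and primitivity of a vector in a lattice is preserved under any lattice isomorphism. By continuity of the map $(g,w)\mapsto gw$, there exists an open neighborhood $\mathcal{V} \subset G$ of $g_0$ such that $g w_i \in U_i$ for all $g \in \mathcal{V}$ and all $1\leq i\leq l$. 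Since the $U_i$ are pairwise disjoint, the vectors $gw_1,\dots,gw_l$ are automatically distinct, hence $g\Z^d$ admits at least $l$ distinct primitive vectors in $W$. Therefore the image of $\mathcal{V}$ in $\X$ is an open neighborhood of $\Lambda_0$ contained in $\X(W,l)$.

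As this argument holds at every point of $\X(W,l)$, the set is open. There is no real obstacle here; the only subtlety worth recording is the preservation of primitivity under the lattice isomorphism $g$, which is the only place the hypothesis ``primitive'' (as opposed to merely ``nonzero'') enters, and the disjointness of the $U_i$, which ensures we do not lose any of the $l$ vectors by collision after perturbation. Note that this dovetails with Lemma~\ref{SW22 Lem 8.1}: the complementary closedness of $\X(\overline{W},l)$ for compact $\overline{W}$ uses a compactness argument to promote a sequence of vectors to a limit vector, whereas here openness follows simply from the continuity of the lattice action on a fixed finite collection of integer vectors.
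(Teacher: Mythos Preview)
Your argument is correct and is the standard proof of this fact. Note, however, that the paper does not give its own proof of this lemma: it is quoted verbatim from \cite[Lem.~8.2]{SW22} and used as a black box, so there is no in-paper proof to compare against.
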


\subsection{Adelic Setup}
In this brief section, we introduce an adelic machinery that lies at the heart of incorporating congruence conditions into Diophantine analysis. Our presentation here is heavily influenced by \cite{SW22}. Let $\Primes$ be the set of rational primes. Let $\A= \R \times \A_f= \R \times \prod_{p \in \Primes}'\Q_p$ be the ring of adeles. Here $\prod'$ stands for the restricted product, i.e, a sequence $\underbar{$\beta$}= (\beta_\infty, \beta_f)= (\beta_\infty,\beta_2, \beta_3 \ldots, \beta_p, \ldots )$ belongs to $\A$ if and only if $\beta_p \in \Z_p$ for all but finitely many $p$. As suggested by the notation, we denote the real coordinate of a sequence $\underbar{$\beta$} \in \A$ by $ \beta_\infty$ and the sequence of $p$-adic coordinates by $\beta_f= (\beta_p)_{p \in \Primes}.$ The rational numbers $\Q$ are embedded in $\A$ diagonally, that is, $q \in \Q$ is identified with the constant sequence $(q,q, \ldots)$. We let $\SL_d(\A)= \SL_d(\R) \times \SL_d(\A_f)= \SL_d(\R) \times \prod_{p \in \Primes}'\SL_d(\Q_p)$ and use similar notation $(g_\infty, g_f)= (g_\infty, (g_p)_{p \in \Primes })$ to denote elements of $\SL_d(\A).$ By a theorem of Borel, the diagonal embedding of $\SL_d(\Q)$ in $\SL_d(\A)$ is a lattice in $\SL_d(\A)$. Let $$K_f:= \prod_{p \in \Primes } \SL_d(\Z_p)$$ and $$\pi_f: \SL_d(\A) \rightarrow \SL_d(\A_f), \ \pi_f(g_\infty, g_f):=g_f.$$ Then $K_f$ is a compact open subgroup of $\SL_d(\A_f).$ Via the embedding $\SL_n(\A_f) \simeq \{e\} \times \SL_d(\A_f)$ we also think of $K_f$ as a subgroup of $\SL_d(\A)$. We shall use the following two basic facts (see \cite[Chap.~7]{PR94}):
\begin{itemize}
    \item[(i)] The intersection $K_f \cap \pi_f(\SL_d(\Q)) $ is equal to $\pi_f(\SL_d(\Z))$.
    \item[(ii)] The projection $\pi_f(\SL_d(\Q))$ is dense in $\SL_d(\A_f)$.
\end{itemize}
Let $$\XA= \SL_d(\A)/\SL_d(\Q), $$ and let $\mu_{\XA}$ denote the $\SL_d(\A)$-invariant probability measure on $\XA$. There is a natural projection $\pi: \XA \rightarrow  \X$, which can be described as follows: Given $\tilde x = (g_\infty, g_f)\SL_d(\Q) \in \XA$, using $(ii)$ and the fact that $K_f$ is open, we may replace the representative $(g_\infty, g_f)$ by another representative $(g_\infty \gamma, g_f\gamma)$, where $\gamma \in \SL_d(\Q)$ is such that $g_f\gamma \in K_f$. We then define $\pi(\tilde x)= g_\infty \gamma \SL_d(\Z)$. This is well defined since, if $g_f\gamma_1, g_f \gamma_2 \in K_f$, then by $(i)$, $\pi_f(\gamma_1^{-1}\gamma_2) \in K_f \cap \pi_f(\SL_d(\Q))= \pi_f(\SL_d(\Z))$, and so $g_\infty\gamma_1 \SL_d(\Z)= g_\infty\gamma_2 \SL_d(\Z)$. 

It is clear that $\pi$ intertwines the actions of $G_\infty:= \SL_d(\R)$ on $\XA$ and $\X.$ Since there is a unique $G_\infty$-invariant probability measure on $\X$, we have $(\pi)_*\mu_{\XA}= \mu_{\X}$. In particular, the  group $\{a_{t}\} \subset G_\infty$ defined as in \eqref{defatt} acts on both of these spaces and $\pi$ is a factor map for these actions.

\subsection{Birkhoff Genericity}
\label{sec: Birkhoff Genericity}
Given $\theta \in \Mat$, we define $\tilde \Lambda_\theta \in \XA$ as 
\begin{align}
\label{eq: def tilde Lambda}
    \tilde \Lambda_\theta = \left( \begin{pmatrix}
    I_m & \theta \\ & I_n
\end{pmatrix}, e_f \right) \SL_d(\Q) \in \XA,
\end{align}
and also define $\Lambda_\theta= \pi(\tilde \Lambda_\theta) \in \X$.

\begin{defn}
\label{def: generic flow}
     We will say that $x \in \X$ (resp. $\XA$) is $(a_t, \mu_{\X})$-generic  (resp. $(a_t, \mu_{\XA})$-generic) if
     \begin{align}
     \label{eq: def generic flow}
         \lim_{T \rightarrow \infty} \frac{1}{m_{\R^{k+r-1}}(J^T)} \int_{J^T} \delta_{a_{t_1, \ldots, t_r}x}\, dt_1 \cdots dt_r= \mu_{\X} \text{ (resp. } \mu_{\XA}),
     \end{align}
     with respect to the tight topology, where $J^\tau$ is defined as 
    \begin{align}
    \label{eq: def J^T}
    J^T = \left\{
        (t_1, \ldots, t_{k}, s_1, \ldots, s_{r-1}) \in (\R_{\geq 0})^{k + r - 1} \;\middle|\;
        \begin{aligned}
            &s_i \leq T \text{ for all } i = 1, \ldots, r-1, \\
            &0 \leq m_1 t_1 + \cdots + m_k t_k - n_1 s_1 - \cdots - n_{r-1} s_{r-1} \leq n_r T
        \end{aligned}
    \right\}.
\end{align}
    Note that
    \begin{align}
        \label{eq: measure of J T}
        m_{\R^{k+r-1}}(J^T)= \frac{T^{k+r-1}c_{k+r-1}(n_1, \ldots, n_r)}{m_1 \cdots n_{r-1}. (k+r-1)!},
    \end{align}
where $c_{k+r-1}(n_1, \ldots, n_r)$ is defined as in \eqref{eq: def c k r 1}.
\end{defn}
\begin{defn}
    \label{def: generic type}
    Given $\theta \in \Mat$ will be called of generic type with respect to decomposition $m= m_1 + \cdots +m_k$ , $n= n_1 + \cdots +n_r$ if $\Lambda_\theta$ is $(a_t, \mu_{\X})$-generic. If $k=r=1$, i.e., if the decomposition is trivial, then we simply say that $\theta$ is of generic type.
\end{defn}

With this notation, we now recall the following results from \cite{aggarwalghosh2024joint}.
\begin{lem}[{\cite[Cor.~10.6]{aggarwalghosh2024joint}}]
\label{lem: Birkhoff genericity}
    For Lebesgue almost every $\theta \in \Mat$, $\theta$ is of generic type with respect to decomposition $m= m_1 + \cdots +m_k$ , $n= n_1 + \cdots +n_r$.
\end{lem}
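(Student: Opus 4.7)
The plan is to establish multi-parameter equidistribution of the horospherical orbit $(a_t \Lambda_\theta)_{t \in J^T}$ for Lebesgue almost every $\theta \in \Mat$, via a variance estimate combined with decay of matrix coefficients (Howe--Moore). First I would reduce to proving, for each $f$ in a countable dense subset of $C_c(\X)$ and each bounded box $B \subset \Mat$, that the averages $\Phi_T(\theta) := \frac{1}{m_{\R^{k+r-1}}(J^T)} \int_{J^T} f(a_t \Lambda_\theta)\, dt$ converge to $\mu_\X(f)$ for a.e.\ $\theta \in B$; tight convergence then follows by separately handling the total mass via non-divergence.

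The key observation is that $\Lambda_\theta = \left(\begin{smallmatrix} I_m & \theta \\ & I_n \end{smallmatrix}\right) \Z^d$ arises from an element of the expanding horospherical subgroup with respect to any $a_t$ whose parameters lie in the positive Weyl chamber; the region $J^T$ is precisely designed so that the final coordinate $t_{k+r}$ is non-negative, placing every $a_t$ with $t \in J^T$ in this chamber. Expanding the $L^2$-variance $\int_B |\Phi_T(\theta) - \mu_\X(f)|^2\, d\theta$ and performing a change of variables on the horospherical subgroup yields, up to boundary terms, a double integral over $(t,s) \in J^T \times J^T$ of matrix coefficients $\langle a_{s-t} \cdot f, f \rangle_{L^2(\X)}$. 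By Howe--Moore applied to $f - \mu_\X(f)$, these coefficients decay polynomially in $\|s - t\|$; combined with the geometric bound $m_{\R^{2(k+r-1)}}\bigl(\{(t,s) \in J^T \times J^T : \|s - t\| < R\}\bigr) = O\bigl(R^{k+r-1}\, m_{\R^{k+r-1}}(J^T)\bigr)$, this produces a variance estimate of order $O(T^{-\delta})$ for some $\delta > 0$.

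A Borel--Cantelli argument along a polynomially spaced subsequence $T_n$, together with interpolation between consecutive $T_n$ using the slow variation of $m_{\R^{k+r-1}}(J^T)$ in $T$, then yields pointwise convergence of $\Phi_T(\theta)$ to $\mu_\X(f)$ along the full sequence for a.e.\ $\theta \in B$. To upgrade this weak-$*$ convergence to tight convergence, I would invoke the quantitative non-divergence of Kleinbock--Margulis: for Lebesgue-a.e.\ $\theta$ and any $\epsilon > 0$, the proportion of $t \in J^T$ for which $a_t \Lambda_\theta$ escapes outside a compact $K_\epsilon \subset \X$ is at most $\epsilon$ in the limit, which controls the total mass of the empirical average.

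The main obstacle is the variance bound: the non-standard shape of $J^T$---a truncated cone rather than a cube---requires careful geometric bookkeeping when estimating the overlap $J^T \cap (J^T + s)$ for small shifts $s$, and one must match the rate of decay of matrix coefficients against the polynomial volume growth $m_{\R^{k+r-1}}(J^T) \asymp T^{k+r-1}$ so as to produce an effectively summable estimate. This is essentially the content of the multi-parameter cross-section framework developed in the authors' earlier work \cite{aggarwalghosh2024joint}, from which the statement is cited.
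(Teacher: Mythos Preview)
The paper does not prove this lemma; it is quoted verbatim as Corollary~10.6 of the authors' earlier work \cite{aggarwalghosh2024joint}. So there is no ``paper's own proof'' to compare against, and your task is really to give a plausible independent argument. Your variance--Borel--Cantelli strategy is a legitimate route, but two points deserve correction.

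First, the step ``a change of variables on the horospherical subgroup yields \ldots\ matrix coefficients $\langle a_{s-t} f, f\rangle_{L^2(\X)}$'' is not as direct as you suggest. The raw variance is $\int_B g(a_s u(\theta)\Z^d)\,\overline{g(a_t u(\theta)\Z^d)}\,d\theta$, an integral over a \emph{fixed} horospherical box $B$, not over $\X$. To convert this into a matrix coefficient one must first conjugate $u(\theta)$ past $a_{\min(s,t)}$ to obtain an integral over a dilated box $B_t$, and then invoke effective equidistribution of large horospherical pieces (the Margulis thickening argument, as in Kleinbock--Margulis) to replace the average over $B_t$ by $\mu_\X$. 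Only after that does $\langle a_{s-t} g, g\rangle$ appear. This intermediate step is standard but non-trivial, and the error it introduces (of order $e^{-\delta\|\min(s,t)\|}$) must be tracked alongside the matrix-coefficient decay.

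Second, your closing remark that ``this is essentially the content of the multi-parameter cross-section framework'' is off target: cross-sections are used in the present paper to translate dynamical genericity into Diophantine statements (Sections~\ref{sec: Results borrowed}--\ref{sec: Cross-section Genericity}), but they play no role in establishing genericity itself.

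A cleaner alternative, likely closer in spirit to what the cited corollary does, avoids variance estimates entirely. The sets $J^T$ form a tempered F{\o}lner sequence in $\R^{k+r-1}$, so Lindenstrauss's pointwise ergodic theorem (combined with ergodicity of the $a_t$-action, which follows from Howe--Moore) gives a full-$\mu_\X$-measure set $\mathcal{G}\subset\X$ of generic lattices. One then checks that $\mathcal{G}$ is invariant under the opposite parabolic $P^-$ (generated by the diagonal, the contracting horospherical $U^-$, and the block-diagonal Levi $M$): invariance under $a_t$ and $M$ is immediate, and $U^-$-invariance follows because $a_t$ contracts $U^-$ so stable-leaf neighbors have asymptotically identical orbits. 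Since $G=\SL_d(\R)$ has the local product decomposition $U\cdot P^-$ and $\mathcal{G}$ is $P^-$-saturated, Fubini forces $\mathcal{G}$ to meet the specific $U$-leaf $\{u(\theta)\Z^d:\theta\in\Mat\}$ in a set of full Lebesgue measure. This bypasses the delicate geometry of $J^T$ that you flag as the main obstacle.
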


\begin{lem}[{\cite[Cor.~10.8]{aggarwalghosh2024joint}}]
\label{lem: Generic iff adele generic }
    An element $\theta \in \Mat$ is of generic type with respect to decomposition $m= m_1 + \cdots +m_k$ , $n= n_1 + \cdots +n_r$ if and only if $\Tilde{\Lambda}_\theta$ is $(a_t, \mu_{\XA})$-generic.
\end{lem}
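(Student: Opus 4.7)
The direction ``$\tilde \Lambda_\theta$ adelic-generic $\Rightarrow$ $\Lambda_\theta$ real-generic'' is immediate. Since $\pi: \XA \to \X$ is continuous and $\pi_* \mu_{\XA} = \mu_\X$, applying the equidistribution \eqref{eq: def generic flow} of $\tilde \Lambda_\theta$ to test functions of the form $f \circ \pi$ with $f \in C_b(\X)$ yields the corresponding statement for $\Lambda_\theta$ in the tight topology.

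For the converse, my plan is to reduce adelic genericity to genericity on each finite congruence cover $\X_N := \SL_d(\R)/\Gamma(N)$, where $\Gamma(N) := \ker\bigl(\SL_d(\Z) \to \SL_d(\Z/N\Z)\bigr)$. Strong approximation gives a natural identification $\XA \cong \varprojlim_N \X_N$ with compatible probability measures $\mu_{\X_N}$, and functions pulled back from some $\X_N$ are dense in $C_b(\XA)$. Combined with tightness of the empirical measures
\[
\nu_T := \frac{1}{m_{\R^{k+r-1}}(J^T)}\int_{J^T}\delta_{a_t \tilde \Lambda_\theta}\,dt
\]
(which is inherited from tightness of their $\X$-projections, guaranteed by real genericity of $\Lambda_\theta$), this reduces the problem to proving, for every $N \geq 1$, that the image $\Lambda_\theta^{(N)} \in \X_N$ is $(a_t, \mu_{\X_N})$-generic.

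Fix $N$. The cover $p_N : \X_N \to \X$ is finite with deck group $D_N := \SL_d(\Z)/\Gamma(N)$, and $a_t$ commutes with the deck action. Any tight-topology limit point $\nu$ of the empirical measures on $\X_N$ is $a_t$-invariant and satisfies $(p_N)_*\nu = \mu_\X$. Decomposing $L^2(\X_N)$ under the commuting $D_N$-action, one sees $\nu$ already agrees with $\mu_{\X_N}$ on the trivial isotypic component (pullbacks from $\X$), so the issue is to show that $\nu$ vanishes on the non-trivial $D_N$-isotypic components. The main obstacle is precisely this last step. I would combine ergodicity of $(a_t, \mu_{\X_N})$---a consequence of Moore's theorem applied to any noncompact one-parameter subgroup of the multi-parameter diagonal flow---with a spectral-gap argument: the $D_N$-average $|D_N|^{-1}\sum_{\sigma \in D_N}\sigma_*\nu$ is both $a_t$- and $D_N$-invariant and still projects to $\mu_\X$, hence equals $\mu_{\X_N}$, so $\nu$ differs from $\mu_{\X_N}$ only in the non-trivial $D_N$-isotypic subspaces of $L^2_0(\X_N)$; the spectral gap (Burger--Sarnak / Clozel / Kazhdan) for $\SL_d(\R)$-representations appearing there forces matrix coefficients along $a_t$ to decay as $J^T$ grows unboundedly in every diagonal direction, ruling out any surviving mass in these components in the limit. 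Equivalently, the task is to prove that the return cocycle $\gamma_t \pmod N \in \SL_d(\Z/N\Z)$ (where $\gamma_t \in \SL_d(\Z)$ is the element that brings $a_t \beta_\infty$ back to a fixed fundamental domain in $\SL_d(\R)$) equidistributes in $\SL_d(\Z/N\Z)$, which is the concrete arithmetic content of adelic genericity on each $\X_N$.
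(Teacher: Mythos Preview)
The paper does not supply a proof here; the lemma is simply quoted from the authors' earlier work, so there is no in-paper argument to compare against and I assess your proposal on its own merits. Your outline is sound: the easy direction via $\pi_*$; for the converse, the reduction to congruence covers $\X_N$ via strong approximation, tightness on $\XA$ inherited from genericity on $\X$, and the claim that any subsequential limit $\nu$ of the empirical measures on $\X_N$ is $a_t$-invariant and projects to $\mu_\X$. (You state the $a_t$-invariance without justification; it follows from the F{\o}lner property $m_{\R^{k+r-1}}\bigl((J^T+s)\,\triangle\,J^T\bigr)=o\bigl(m_{\R^{k+r-1}}(J^T)\bigr)$ for each fixed $s$.) You also correctly obtain $|D_N|^{-1}\sum_{\sigma\in D_N}\sigma_*\nu=\mu_{\X_N}$.

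The gap is in the final step. Invoking spectral gap and decay of matrix coefficients is both unnecessary and, as written, inapplicable: matrix coefficients are inner products $\langle a_t f,g\rangle_{L^2(\mu_{\X_N})}$, not pointwise orbit averages $|J^T|^{-1}\int_{J^T} f(a_t x)\,dt$ at a \emph{fixed} point $x$, so their decay says nothing directly about $\nu_T^{(N)}(f)$ for $f$ in a nontrivial $D_N$-isotypic component. The clean finish uses only what you already have. By Howe--Moore, $\mu_{\X_N}$ is $a_t$-ergodic, hence extremal in the convex set of $a_t$-invariant probability measures on $\X_N$; since you have expressed this ergodic measure as the finite convex combination $|D_N|^{-1}\sum_\sigma \sigma_*\nu$ of $a_t$-invariant probability measures, extremality forces $\sigma_*\nu=\mu_{\X_N}$ for every $\sigma$, and in particular $\nu=\mu_{\X_N}$. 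No spectral gap, no $L^2$-isotypic decomposition, and no analysis of the return cocycle is needed. (A minor correction: the density you want is of pullbacks from the $\X_N$ in $C_c(\XA)$, not $C_b(\XA)$; combined with tightness this already suffices for tight convergence to $\mu_{\XA}$.)
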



\section{Construction of the Cross–Section}
\label{sec: Results borrowed}

The main strategy of this paper is to construct and exploit a suitable \emph{cross–section} in the space of lattices.  While the theory of cross–sections for one–parameter flows was pioneered by Kakutani and Ambrose and is succinctly surveyed in \cite{SW22}, its extension to multi–parameter actions was developed in our earlier work \cite{aggarwalghosh2024joint}.  In particular, although cross–sections have played an important role in homogeneous dynamics (see, e.g., \cites{AthreyaCheung, Marklof2010, MarStro, CC19}), those results pertain exclusively to one–parameter flows.

In \cite{aggarwalghosh2024joint} we introduced an analogous framework for multi–parameter flows and applied it to counting and joint equidistribution for $\e$–approximation.  The present paper relies heavily on those constructions; re-deriving every step here would unduly lengthen the exposition.  Accordingly, we state below only the key definitions and results, whose proofs follow exactly the same lines as in \cite{aggarwalghosh2024joint}.

\medskip
\noindent Before proceeding, for $\e>0$ chosen as in Section \ref{sec: Norms}, define the following subsets of $\R^{m_1}\times \left(\Sphere^{m_2}\times\cdots\times\Sphere^{m_k}\times\Sphere^{n_1}\times\cdots\times\Sphere^{n_r}\right)$:
\begin{align}
  \Le &:= \bigl\{(x,y)\colon \|x\|^{m_1}\le\e\bigr\},
  \label{eq:def-Le}\\
  \Le^+ &:= \{\,z\in\Le : z_1\ge0\}.
  \label{eq:def-Le-plus}
\end{align}
We then set
\[
  \sed \;:=\;\X(\Le) \;=\;\X(\Le^+),
  \qquad
 \ssed \;:=\;\X^\sharp(\Le^+),
\]
and
\[
   \seda \;:=\;\pi^{-1}\bigl(\sed\bigr),
  \qquad \qquad \qquad
  \sseda \;:=\;\pi^{-1}\bigl(\ssed\bigr).
\]

Next, define the maps \(\tilde \psi, \tilde{\psi}^-: \sseda \rightarrow \ZZ_1\) by
    \begin{align}
        \label{defpsia}
        \tilde \psi &= (\psi_{\E_d^1} \circ \pi, \psi' \circ \pi, \psi_f), \\
        \tilde \psi^- &= (\psi_{\E_d^1} \circ \pi, \vartheta \circ \psi' \circ \pi, -\psi_f),
    \end{align}
    where the component maps are defined as follows:
    \begin{itemize}
        \item \textbf{Finite part map \(\psi_f\):} 
        \(\psi_f: \sseda \to \hat{\Z}_\prim^d\). For \(\tilde \Lambda = (g_\infty, g_f)\SL_d(\Q) \in \sseda\) with \(g_f \in K_f\), let \(\Lambda = \pi(\tilde \Lambda) = g_\infty \Z^d\). Since \(\Lambda \in \ssed\), the vector \(\vector(\Lambda)\) defined in \eqref{defv-unique} is the unique primitive vector in \(\Le^+\). By choosing a representative in the coset \(g_\infty \SL_d(\Z)\), we may assume that the \(d\)-th column of \(g_\infty\) is \(\vector(\Lambda)\).  The uniqueness of $v(\Lambda)$ implies that if $\gamma \in \SL_d(\Z)$ satisfies that $g_\infty\gamma$ is another representative of $\Lambda$ having this property, then $$g_\infty \bfe_d = v(\Lambda)= g_\infty \gamma \bfe_d,$$ and hence $\gamma \bfe_d= \bfe_d $. Define $$\psi_f(\Lambda):= g_f \bfe_d;$$ namely, if we write $g_f=(g_p)_{p \in \mathbf{P}}$, then $\psi_f(\tilde \Lambda)$ is the element of $\hat{\Z}^d$ whose $p$ coordinate is the $d$-th column of $g_p$. The above discussion implies that $\psi_f$ is well defined.
        
        \item \textbf{Directional part \(\psi'\):}
        \(\psi': \ssed \to \Sphere^{m_1} \times \cdots \times \Sphere^{n_r} \times [0,\e]\). Let \(\vector(\Lambda) = (x,y)\) be the unique primitive vector in \(\Le^+\) contained in \(\Lambda\), where \(x \in \R^{m_1}\), \(y \in \R^{d - m_1}\). Then define
        \[
        \psi'(\Lambda) := \left(\frac{x}{\|x\|}, y, \|x\|^{m_1} \right).
        \]
        
        \item \textbf{Reflection map \(\vartheta\):} 
        \(\vartheta: \Sphere^{m_1} \times \cdots \times \Sphere^{n_r} \times [0,\e] \to \Sphere^{m_1} \times \cdots \times \Sphere^{n_r} \times [0,\e]\) is given by
        \[
        \vartheta(x,y) := (-x, y).
        \]
        
        \item \textbf{Shape map \(\psi_{\E_d^1}\):}
        \(\psi_{\E_d^1}: \ssed \to \E_d^1\) is defined via the renormalization
        \[
        \psi_{\E_d^1}(\Lambda) := w(\psi'(\Lambda))^{-1} \Lambda,
        \]
        where for \((x, y, \gamma) \in (0, \infty) \times \R^{d-1} \times (0, \infty)\), the matrix \(w(x,y,\gamma)\) is given by
        \[
        w(x,y,\gamma) = 
        \begin{pmatrix}
            \gamma I_{m_1} & \\
            & I_{d - m_1}
        \end{pmatrix}
        \begin{pmatrix}
            x \\ y & (\gamma^{m_1} \|x\|)^{-1/(d-1)} I_{d-1}
        \end{pmatrix}.
        \]
    \end{itemize}
    These maps will be used to give an explicit description of the cross-sectional measure (see Result~\ref{res 7}). 

    Further, define 
\[
  \radius \colon \sed \;\longrightarrow\;\R,
  \qquad
  \radius(\Lambda)\;=\;\min\bigl\{\|\rho_1(\varrho_1(x))\|^{m_1} : x\in\Lambda_{\mathrm{prim}}\cap\Le\bigr\}.
\]
By Lemma 8.1(iii) of \cite{SW22}, the restriction of $\radius$ to the subset $\ssed$ is continuous. 
    
    Finally define
\[
  \B \;:=\;\bigl\{\Lambda\in\seda : \#\bigl(\BB_{\radius(\Lambda)}\cap\Lambda_{\mathrm{prim}}\bigr)=2\bigr\},
  \qquad
  \BA \;:=\;\pi^{-1}(\B).
\]
The set $\BA\subset\seda$ is our desired cross–section for the multi–parameter flow $a_t$. \vspace{0.2in}

\noindent Since $\BA\subset\seda$, it is convenient to first study the properties of $\sed$ and $\seda$. In this section we state these properties, and defer the discussion of the corresponding properties of the cross–section $\BA$ to Section~\ref{sec: Properties of the Cross-section}, which follows.  Since all arguments mirror those in \cite{aggarwalghosh2024joint}, we omit the proofs.

\vspace{0.2in}
\begin{result}\label{res 1:discreteness}
The set \( \seda \) is a cross-section for the multi-parameter flow \( a_t \) on \( \XA \), in the sense that for \( \mu_{\XA} \)-almost every \( x \in \XA \) (resp. \( x \in \X \)), the set
\[
\{t \in \R^{k+r-1} : a_t x \in \seda\}
\]
is a discrete subset of \( \R^{k+r-1} \). The same holds for the set \( \sed \) in \( \X \).
\end{result}

\medskip
\begin{result}\label{res 2:measure}
The set \( \seda \) carries a natural \( K_f \)-invariant measure \( \mu_{\seda} \), defined by
\begin{equation}
    \label{defmu_Sa}
    \mu_{\seda}(E) := \lim_{\tau \to 0} \frac{1}{\tau^{k+r-1}} \mu_{\XA}(E^{I_\tau}) \quad \text{for every Borel measurable set } E \subset \seda,
\end{equation}
where
\begin{align}
    I_\tau &:= [0,\tau]^{k+r-1}, \label{eq: def: I tau} \\
    E^I &:= \{ a_{t} x : t \in I,\ x \in E \}. \label{def: E^I adeles}
\end{align}

Similarly, the set \( \sed \) is naturally equipped with the measure \( \mu_{\sed} := \pi_*(\mu_{\seda}) \), which satisfies an analogous formula to \eqref{defmu_Sa}.
\end{result}
\medskip

\begin{result}
\label{res: 3}
For any \( \delta > 0 \), consider the sets
\begin{align*}
    (\seda)_{\geq \delta} &:= \left\{ x \in \seda : \min\left\{ \|t\|_{\el} : t \in \tilde{T}_{\e}(x) \setminus \{(0,0)\} \right\} \geq 2\delta \right\} = \pi^{-1}\left( (\sed)_{\geq \delta} \right), \\
    (\seda)_{< \delta} &:= \seda \setminus (\seda)_{\geq \delta}.
\end{align*}
These sets satisfy the following important properties:
\begin{itemize}
    \item Both \( (\seda)_{\geq \delta} \) and \( (\seda)_{< \delta} \) are \( \mu_{\seda} \)-JM subsets of \( \seda \).
    \item For any Borel set \( E \subset (\seda)_{\geq \delta} \) and any Borel subset \( I \subset [0,\delta]^{k+r-1} \), we have
    \begin{equation}
        \label{eq: measure on subsets of sed geq delta}
        \mu_{\XA}(E^I) = \mu_{\seda}(E) \cdot m_{\R^{k+r-1}}(I).
    \end{equation}
\end{itemize}
\end{result}
\medskip

\begin{result}
    \label{res 4}
    The set \( \sseda := \pi^{-1}(\X^\sharp(\Le^+)) \) (resp. \( \sed := \X^\sharp(\Le^+) \)) is a co-null open subset of \( \seda \) (resp. \( \sed \)).
\end{result}
\medskip

\begin{result}
    \label{res 5}
    Define the set \( \ueda \) as
    \begin{align}
        \label{eq: def ued}
        \ueda := \pi^{-1}(\ssed \cap \X(L_\e^\circ)),
    \end{align}
    where \( L_\e^\circ \) is defined by
    \[
        L_\e^\circ := \left\{ (x,y) \in \R^{d-n_r} \times \R^{n_r} : (x,y) \in \Le,\ \|y\|^{n_r} < \e \right\}.
    \]
    Then \( \ueda \) is an open subset of \( \seda \) and satisfies the following properties:
    \begin{enumerate}
        \item \( \mu_{\XA}\left( \left( \operatorname{cl}_{\XA}(\seda) \setminus \ueda \right)^{(0,1)^{k+r-1}} \right) = 0 \).
        \item The map \( (0,1)^{k+r-1} \times \ueda \to \XA \), defined by \( (t, x) \mapsto a_t x \), is open.
    \end{enumerate}
    The same statements hold for \( \ued := \pi(\ueda) \), with \( \X \) in place of \( \XA \).
\end{result}
\medskip

\begin{result}
    \label{res 6}
    For any \(\mu_{\seda}\)-JM subset \(E \subset \seda\) and any \(\tau > 0\), the set
    \[
        E^{J^\tau} = \{ a_t x : t \in J^\tau, \, x \in E \}
    \]
    is \(\mu_{\XA}\)-JM, where \(J^\tau\) is defined in \eqref{eq: def J^T} and \(E^I\) as in \eqref{def: E^I adeles}.
\end{result}
\medskip

\begin{result}
    \label{res 7}
    The maps \(\tilde \psi\) and \(\tilde \psi^-\) are \(K_f\)-equivariant and continuous, where the action of \(K_f\) on $\ZZ_1$ is given by the trivial action on 
    \(\E_d^1 \times \Sphere^{m_1} \times \cdots \times \Sphere^{n_r} \times [0,\e]\) 
    and the natural action on \(\hat{\Z}_\prim^d\). Moreover, their pushforwards of $\mu_{\seda}$ satisfy:
    \begin{align}
        \label{eq: pushforward general}
        (\tilde \psi)_* \mu_{\seda} + (\tilde \psi^-)_* \mu_{\seda} = 
        \frac{m_1 \cdots n_{r-1}}{\zeta(d)} 
        \tmu_1.
    \end{align}
\end{result}

\medskip

\begin{result}
    \label{res 9}
    Assume that \(k = r = 1\), so that the flow \((a_t)\) becomes a one-parameter flow. In this case, given a subset \(A \subset \seda\), we define the following two functions on \(A\):
    \begin{itemize}
        \item The \emph{return time function} \(\tau_A: A \rightarrow \R_{>0}\), defined by
        \[
            \tau_A(x) = \min \{ t \in \R_{>0} : a_t x \in A \}.
        \]
        The minimum exists for \(\mu_{\seda}|_A\)-almost every \(x\), since the set \(\{ t \in \R : a_t x \in A \}\) is discrete for \(\mu_{\seda}\)-almost every \(x\). Hence, \(\tau_A\) is defined \(\mu_{\seda}|_A\)-almost everywhere.
        
        \item The \emph{first return map} \(T_A: A \rightarrow A\), defined by \(T_A(x) = a_{\tau_A(x)} x\). Clearly, this map is also defined \(\mu_{\seda}|_A\)-almost everywhere.
    \end{itemize}
    If \(A \subset \seda\) is a \(\mu_{\seda}\)-JM set with \(\mu_{\seda}(A) > 0\), then the maps \(\tau_A\) and \(T_A\) are continuous almost everywhere (with respect to \(\mu_{\seda}|_A\)). Moreover, they satisfy the following identity, known as the \emph{Kac formula}:
    \begin{align}
        \label{Kac formula}
        \int_A \tau_A \, d\mu_{\seda}|_A = 1.
    \end{align}
\end{result}

\section{Properties of the Cross-section}
\label{sec: Properties of the Cross-section}
 As mentioned in Section \ref{sec: Results borrowed}, this section aims to derive the properties of $\BA$ from the results of the previous section. To begin with, note that since $\BA \subset \seda$, it follows immediately from Result~\ref{res 1:discreteness} that for $\mu_{\XA}$-almost every \( x \in \XA \), the set  
\[
 \{t \in \R^{k+r-1} : a_t x \in \BA\}
\]
is discrete. This implies that $\BA$ is also a cross-section for the $(a_t)$-flow on $\XA$. 

Secondly, using Result~\ref{res 2:measure}, we obtain a natural $K_f$-invariant measure $\mu_{\BA}$ on $\BA$, given by the restriction of $\mu_{\seda}$ to $\BA$, and explicitly described by~\eqref{defmu_Sa}. The same holds for the set $\B \subset \X$, with the corresponding measure \(\mu_{\B} := \pi_* (\mu_{\BA})\). 

To proceed further, we define $\tvector: \B \rightarrow \Le^+$ as 
\begin{align}
    \label{eq: def tvector}
    \tvector(\Lambda)\;=\;\max_{\mathrm{lex}}\{\,w,\,-w\},
\end{align}
where $\{\pm w\}\subset \Lambda$ are the unique pair of nonzero vectors satisfying
\[
\radius(\Lambda)
=\bigl\|\varrho_1(\rho_1(w))\bigr\|^{m_1},
\]
and the \emph{lexicographic order} on $\R^d$ is given by
\[
(x_1,\dots,x_d)\;>\;(y_1,\dots,y_d)
\quad\Longleftrightarrow\quad
x_j>y_j\text{ for the first index }j\text{ with }x_j\neq y_j.
\]

 We now derive further properties of $\BA$ in the following lemmas. 
\medskip

\begin{lem}
\label{lem:Tempered}
    There exists a constant $M$ such that for any $x \in \BA$, we have $$\#\{t \in \Delta: a_tx \in \BA \} \leq M,$$ where $\Delta \subset \R^{k+r-1}$ is defined as
\begin{align}
\label{eq: def L}
\Delta = \left\{
    (t_1, \ldots, t_{k+r-1}) \in \R^{k+r-1} \;\middle|\;
    \begin{aligned}
        &t_1 > 0,\quad t_j \in [-1, 0] \text{ for } 2 \leq j \leq k, \\
        &t_j \in [0, 1] \text{ for } k+1 \leq j \leq k + r - 1, \\
        &0 \leq m_1 t_1 + \cdots + m_k t_k - n_1 t_{k+1} - \cdots - n_{r-1} t_{k + r - 1} \leq n_r
    \end{aligned}
\right\}.
\end{align}
\end{lem}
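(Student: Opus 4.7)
The plan is to associate to each $t \in \Delta$ with $a_t x \in \BA$ the primitive vector $v(t) := a_{-t}\tvector(a_t\pi(x)) \in \pi(x)_{\prim}$, and then to bound both (a) the number of distinct $\pm v(t)$'s and (b) the number of $t$'s mapping to each such vector. The product of these bounds yields $M$.

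To set the stage, I would first verify that $\Delta$ is contained in a fixed compact subset of $\R^{k+r-1}$. Combining the linear inequality in the definition of $\Delta$ with the componentwise box constraints on $t_2,\dots,t_{k+r-1}$ yields $0 < t_1 \leq (d-m_1)/m_1$ and $t_{k+r}\in[0,1]$. With $t$ in a fixed bounded set, the condition $a_t v(t) \in \BB_{\radius(a_t\pi x)} \subseteq \BB_\e$ uniformly bounds each coordinate group of $a_t v(t)$; inverting the now-bounded scaling $a_{-t}$ confines $v(t)$ to a fixed compact subset $U \subset \R^d$ independent of $x$.

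The central step is bounding the number of distinct $\pm v(t)$. The crucial observation is that, since $t_i \leq 0$ for $2\leq i\leq k$ and $t_{k+j}\geq 0$ for $1\leq j\leq r-1$ on $\Delta$, we have $e^{-t_i},\ e^{t_{k+j}} \geq 1$, so the box $W_t := a_{-t}\BB_{\radius(a_t\pi x)}$ always contains the ``core'' region $B_{\|\rho_1\varrho_1(v(t))\|}^{m_1} \times B_1^{m_2}\times\cdots\times B_1^{n_r}$. By the uniqueness condition defining $\B$, the only primitive vectors of $\pi(x)$ in $W_t$ are $\pm v(t)$. Partitioning $U$ into finitely many ``types'' according to which of the quantities $\|\rho_i\varrho_1(v)\|$ ($i\geq 2$) and $\|\rho_j'\varrho_2(v)\|$ ($j\geq 1$) exceed $1$, and comparing two vectors $v=v(t),\ v'=v(t')$ of the same type, the containment of the core region together with the uniqueness condition forces $v'\notin W_t$ to mean at least one of the norms $\|\rho_1\varrho_1(v')\|,\ \|\rho_i\varrho_1(v')\|,\ \|\rho_j'\varrho_2(v')\|$ strictly exceeds the corresponding norm of $v$. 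Hence within each type the realized vectors form an antichain in the componentwise partial order on norms; a quantitative lattice-geometry argument—using that $\pi(x)$ is unimodular and $U$ is a fixed compact set—bounds the size of such an antichain by a constant $M_1$ independent of $x$.

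Finally, for each fixed $v \in \pi(x)_{\prim}$ the set $\{t\in\Delta : v(t)=v\}$ is discrete by Result~\ref{res 1:discreteness} and contained in the bounded set $\Delta$, hence finite; the explicit system of inequalities $\|\rho_i\varrho_1(v)\|\leq e^{-t_i}$, $\|\rho_j'\varrho_2(v)\|\leq e^{t_{k+j}}$, together with the minimality-and-uniqueness condition, yields a uniform bound $M_2$ independent of $v$ and $x$. Taking $M := M_1\cdot M_2$ then proves the lemma. The main obstacle is the quantitative antichain bound in the central step, which requires careful bookkeeping across the multiple scales of $a_t$ and the multiple coordinate groups, and in particular a uniform control (independent of the possibly very skew lattice $\pi(x)$) of how many mutually incomparable primitive vectors can coexist in $U$.
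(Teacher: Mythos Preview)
Your setup is sound: pulling back $\tvector(a_t\pi(x))$ to a vector $v(t)\in\pi(x)_{\prim}$ and confining all such vectors to a fixed compact box is exactly how the paper begins, and your observation that the sphere constraints in $\Le$ pin down $t_2,\dots,t_{k+r-1}$ (and then $t_1$ via the trace condition) already gives $M_2=1$, so step~(b) is essentially free. The genuine gap is step~(a).

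The antichain argument you sketch is both imprecisely formulated and, as you yourself flag, not completed. The containment $W_t\supseteq B^{m_1}_{\|\rho_1\varrho_1(v(t))\|}\times B^{m_2}_1\times\cdots\times B^{n_r}_1$ only tells you that if $v'\neq\pm v$ then either $\|\rho_1\varrho_1(v')\|>\|\rho_1\varrho_1(v)\|$ or some other block-norm of $v'$ exceeds $1$ (not the corresponding norm of $v$), so the relation you obtain is not a genuine componentwise partial order among the $v(t)$'s. More seriously, even granting an antichain structure, there is no uniform bound (independent of the unimodular lattice $\pi(x)$) on the size of an antichain of primitive vectors in a fixed compact set $U$: a highly skew lattice can place arbitrarily many primitive vectors in $U$, and nothing in your outline prevents them from being pairwise incomparable. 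Invoking Result~\ref{res 1:discreteness} does not help either, since that is an almost-sure statement.

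The paper bypasses all of this with a single pigeonhole step. One first shows (using $a_{t_i}v_0\notin\BB_{\radius(a_{t_i}\Lambda)}$) that every pulled-back vector $v_i$ satisfies $\|\rho_1\varrho_1(v_i)\|^{m_1}<\radius(\Lambda)$, so all of $v_0,\dots,v_M$ lie in $B^{m_1}_{\radius(\Lambda)^{1/m_1}}\times B^{m_2}_e\times\cdots\times B^{n_r}_e$. Now choose $M$ so large that any $M+1$ points in this box contain two whose \emph{difference} lies in $\BB_{\radius(\Lambda)}^\circ$; this is a pure packing bound, independent of any lattice. That difference is a nonzero vector of $\Lambda$ in the open box $\BB_{\radius(\Lambda)}^\circ$, which forces a primitive vector of $\Lambda$ into $\BB_{\radius(\Lambda)}^\circ$ and contradicts $\Lambda\in\B$ (since $\pm v_0$ sit on the boundary). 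The point is that one never needs to bound the number of lattice points in $U$ directly; the contradiction comes from a single short difference vector.
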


\begin{proof}
    The proof is motivated by \cite[Prop.~9.8]{SW22}. Fix \( M \) large enough so that any subset of \( \BB_1 \) of cardinality \( M + 1 \) contains distinct points \( x, y \) with \( \|x - y\| < e^{-1} \). Such an \( M \) exists by boundedness of \( \BB_1 \).

    Now apply a linear transformation of \( \R^d \) that dilates \( \R^{m_1} \) and \( \R^{d - m_1} \). Then for any \( l > 0 \), any subset of
    \[
        B^{m_1}_{l^{1/m_1}} \times B^{m_2}_e \times \cdots \times B^{n_r}_e
    \]
    of cardinality \( M + 1 \) contains distinct points \( x, y \) such that \( x - y \in \BB_l^\circ \).

    We claim that this constant \( M \) works. Note that since $\BA= \pi^{-1}(\B)$, it suffices to show that for any \( x \in \B \), we have
    \[
        \#\{t \in \Delta : a_t x \in \B \} \leq M.
    \]
    
    To prove this, assume by contradiction that there exists \( \Lambda \in \B \), and distinct \( t_1, \ldots, t_M \in \Delta \setminus \{(0, \ldots, 0)\} \) such that \( a_{t_i} \Lambda \in \B \) for all \( i \). Let \( v_0 = \tvector(\Lambda) \), and define \( v_i = \tvector(a_{t_i}\Lambda) \) for \( 1 \leq i \leq M \), where \( \tvector: \B \to \Le^+ \) is as in \eqref{eq: def tvector}.

    Since \( t_i \in \Delta \) and \( a_{t_i} v_i \in \Le \), we have
    \[
        1 \leq \|\rho_j(\varrho_1(v_i))\| \leq e \quad \text{for } 2 \leq j \leq k,
    \]
    and
    \[
        1 \leq \|\rho_l'(\varrho_2(v_i))\| \leq e \quad \text{for } 1 \leq l \leq r.
    \]

    Furthermore, for all \( 1 \leq i \leq M \), we have
    \begin{align}
        a_{t_i} v_0 \notin \BB_{\radius(a_{t_i} \Lambda)}. \label{eq:aa-bb-cc-3}
    \end{align}

    Since \( t_i \in \Delta \), it follows that
    \begin{align}
        \|\rho_j(\varrho_1(a_{t_i} v_0))\| \leq 1 &\quad \text{for } 2 \leq j \leq k, \label{eq:aa-bb-cc-1} \\
        \|\rho_l'(\varrho_2(a_{t_i} v_0))\| \leq 1 &\quad \text{for } 1 \leq l \leq r. \label{eq:aa-bb-cc-2}
    \end{align}

    Combining \eqref{eq:aa-bb-cc-3}, \eqref{eq:aa-bb-cc-1}, and \eqref{eq:aa-bb-cc-2}, we obtain
    \[
        \|\rho_1(\varrho_1(v_i))\|^{m_1} < \|\rho_1(\varrho_1(v_0))\|^{m_1} = \radius(\Lambda).
    \]

    Hence, the vectors \( v_0, v_1, \ldots, v_M \) are distinct and lie in
    \[
        B^{m_1}_{\radius(\Lambda)^{1/m_1}} \times B^{m_2}_e \times \cdots \times B^{n_r}_e.
    \]

    By the choice of \( M \), there exist distinct vectors \( w_1, w_2 \) among \( v_0, v_1, \ldots, v_M \) such that \( w_1 - w_2 \in \BB_{\radius(\Lambda)}^\circ \). Since \( w_1, w_2 \in \Lambda \), this contradicts the assumption that \( \#(\BB_{\radius(\Lambda)} \cap \Lambda_{\prim}) = 2 \). This contradiction completes the proof.
\end{proof}

\medskip
\begin{lem}
\label{lem: BA is JM}
    The set $\BA$ is $\mu_{\seda}$-JM subset of $\seda$ with positive $\mu_{\seda}$-measure.
\end{lem}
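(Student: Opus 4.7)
The plan is to sandwich $\BA$ between two sets whose measure-theoretic properties are already recorded, namely
\[
\sseda \;\subset\; \BA \;\subset\; \seda.
\]
By Result~\ref{res 4}, the inner set $\sseda$ is an open co-null subset of the outer set $\seda$. Granting the sandwich, positive measure follows at once from $\mu_{\seda}(\BA)\ge \mu_{\seda}(\sseda)=\mu_{\seda}(\seda)>0$. For Jordan measurability, since $\sseda$ is open in $\seda$ and contained in $\BA$, we have $\sseda\subset\mathrm{int}_{\seda}(\BA)$, hence $\partial_{\seda}(\BA)\subset \seda\setminus\sseda$, which is $\mu_{\seda}$-null.

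The only substantive step is therefore the inclusion $\sseda\subset\BA$, equivalently (since $\BA=\pi^{-1}(\B)$) the inclusion $\ssed\subset\B$ at the real place. To prove this, I would fix $\Lambda\in\ssed$ and let $v$ be the unique primitive vector of $\Lambda$ in $\Le^+$. Since $\Le$ is symmetric under negation, any primitive vector $w\in\Lambda\cap\Le$ satisfies either $w\in\Le^+$ or $-w\in\Le^+$; in each case the uniqueness condition defining $\ssed$ forces $w=\pm v$. Hence $\Lambda_{\prim}\cap\Le=\{\pm v\}$ and $\radius(\Lambda)=\|\rho_1(\varrho_1(v))\|^{m_1}$.

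Finally, I would use the norm normalisations fixed in Section~\ref{sec: Norms} to identify $\Le$ with $\BB_\e$, which gives $\BB_{\radius(\Lambda)}\subset\Le$. Consequently every primitive vector of $\Lambda$ in $\BB_{\radius(\Lambda)}$ lies in $\Le$, and hence equals $\pm v$; conversely, $\pm v$ are in $\BB_{\radius(\Lambda)}$ because $v\in\Le$ gives the trailing-coordinate bounds and the first-coordinate norm equals $\radius(\Lambda)^{1/m_1}$ by definition. This yields $\#(\BB_{\radius(\Lambda)}\cap\Lambda_{\prim})=2$, i.e.\ $\Lambda\in\B$. The only mild bookkeeping obstacle is the degenerate case $v_1=0$, but this is again ruled out by the uniqueness condition defining $\ssed$ (otherwise $-v$ would be a second primitive vector in $\Le^+$). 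With these ingredients in place, the sandwich is established and both conclusions of the lemma follow.
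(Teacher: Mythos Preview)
Your sandwich argument is clean, but the key inclusion $\ssed\subset\B$ is false, so the entire strategy collapses. The issue is the claimed identification of $\Le$ with $\BB_\e$. By definition (see~\eqref{eq:def-Le}), the set $\Le$ lies in $\R^{m_1}\times\Sphere^{m_2}\times\cdots\times\Sphere^{n_r}$, so every vector in $\Le$ has its trailing components on the unit \emph{spheres} (norm exactly $1$). By contrast, $\BB_\delta=B_{\delta^{1/m_1}}^{m_1}\times B_1^{m_2}\times\cdots\times B_1^{n_r}$ has its trailing components in unit \emph{balls} (norm $\le 1$). The norm normalisations in Section~\ref{sec: Norms} are scaling conventions and cannot make a sphere equal to a ball; in particular $\BB_{\radius(\Lambda)}\not\subset\Le$.

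Concretely, a lattice $\Lambda\in\ssed$ has $\Lambda_{\prim}\cap\Le=\{\pm v\}$ as you argue, but $\Lambda$ may well contain a third primitive vector $w\in\BB_{\radius(\Lambda)}$ with, say, $\|\rho_2(\varrho_1(w))\|<1$; such $w$ lies outside $\Le$, so it is invisible to the $\ssed$ condition, yet forces $\#(\BB_{\radius(\Lambda)}\cap\Lambda_{\prim})\ge 3$ and hence $\Lambda\notin\B$. The paper's proof reflects this: it does not claim $\ssed\subset\B$, but instead shows that $\B\cap\ssed$ is open in $\sed$ (giving positive measure via full support of $\mu_{\sed}$), and controls $\partial_{\sed}(\B)$ by showing it is contained in the null set of lattices with a primitive vector on the \emph{boundary} of the box $\BB_{\radius(\Lambda)}$. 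Both halves require genuine work beyond Result~\ref{res 4}.
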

\begin{proof}
Since $\BA= \pi^{-1}(\B)$, it suffices to show that $\B$ is a $\mu_{\sed}$–JM subset of $\sed$ and has positive $\mu_{\sed}$–measure. We now prove the two claims in order:

\medskip

\noindent\textbf{(i) Positive measure.}  
We will show that $\B \cap \ssed$ is open. Since $\mu_{\sed}$ has full support on $\sed$ (Result~\ref{res 7}), this will imply that $\B$ has positive $\mu_{\sed}$–measure.

\medskip

To see this, note that
\begin{align}
     \sed\setminus (\B\cap \ssed)
  \;=\;
  (\sed\setminus\ssed)\;\cup\;K, \label{eq: aa bb cc dd 1}
\end{align}
where
\[
  K \;=\;\bigl\{\Lambda\in\ssed : \#\bigl(\BB_{\radius(\Lambda)}\cap\Lambda_{\mathrm{prim}}\bigr)\ge3\bigr\}.
\]

To prove that the right-hand side of \eqref{eq: aa bb cc dd 1} is closed, let \((\Lambda_i)_i\) be a sequence in the right-hand side of \eqref{eq: aa bb cc dd 1} converging to some \(\Lambda\). We aim to show that \(\Lambda\) also belongs to the right-hand side of \eqref{eq: aa bb cc dd 1}.

Since \(\sed \setminus \ssed\) is closed by Result~\ref{res 4}, we may assume without loss of generality that \(\Lambda_i \in K \cap \ssed\) for all \(i\). If the limit \(\Lambda \in \sed \setminus \ssed\), then we are done. Thus, we may further assume that \(\Lambda \notin \sed \setminus \ssed\).

Now, by Lemma~\ref{SW22 Lem 8.1}, the limit \(\Lambda\) lies in \(\sed\), and hence \(\Lambda \in \ssed\). Since each \(\Lambda_i \in K\), the set \(\BB_{\radius(\Lambda_i)}\) contains at least three primitive vectors. Because the radius function \(\radius\) is continuous on \(\ssed\), we have \(\radius(\Lambda_i) \to \radius(\Lambda)\).

Fix any \(r' > \radius(\Lambda)\). For all sufficiently large \(i\), we then have \(\radius(\Lambda_i) < r'\), and so \(\Lambda_i \in \X(\BB_{r'}, 3)\). By another application of Lemma~\ref{SW22 Lem 8.1}, it follows that \(\Lambda \in \X(\BB_{r'}, 3)\). Since this holds for all \(r' > \radius(\Lambda)\), we conclude that \(\Lambda \in K\).

Therefore, \(\Lambda \in (\sed \setminus \ssed) \cup K\), showing that the right-hand side of \eqref{eq: aa bb cc dd 1} is closed. It follows that \(\B\cap \ssed\) is open in \(\sed\), completing the proof.\\

\medskip

\noindent\textbf{(ii) The JM condition.}
Define
\[
  U_0
  \;=\;
  \bigl\{\Lambda \in \ssed : \Lambda \cap \BB_{\radius(\Lambda)}^\circ \neq \emptyset \bigr\}.
\]
We first show that \(U_0\) is open in \(\sed\). To that end, let \((\Lambda_i)_i\) be a sequence in \(\sed\) converging to some \(\Lambda \in U_0\). We aim to show that \(\Lambda_i \in U_0\) for all sufficiently large \(i\).

Since \(\Lambda \in \X(\BB_{\radius(\Lambda)}^\circ)\), we can write
\[
\Lambda \in \X(\BB_{\radius(\Lambda)}^\circ) = \bigcup_{r < \radius(\Lambda)} \X(\BB_r),
\]
so there exists \(s < \radius(\Lambda)\) such that \(\Lambda \in \X(\BB_s)\). By Lemma~\ref{Sw22 Lem 8.2}, the set \(\X(\BB_s)\) is open in \(\sed\), and therefore \(\Lambda_i \in \X(\BB_s)\) for all sufficiently large \(i\).

Moreover, since \(U_0 \subset \ssed\) and \(\ssed\) is open in \(\sed\) by Result~\ref{res 4}, we have \(\Lambda_i \in \ssed\) for all large enough \(i\). Hence by Lemma \ref{SW22 Lem 8.1}, we have \(\radius(\Lambda_i) \to \radius(\Lambda)\). In particular, \(s < \radius(\Lambda_i)\) for all large \(i\), and thus
\[
\Lambda_i \in \X(\BB_s) \subset \X(\BB_{\radius(\Lambda_i)}^\circ).
\]
This means that \((\Lambda_i)_\prim \cap \BB_{\radius(\Lambda_i)}^\circ \neq \emptyset\), i.e., \(\Lambda_i \in U_0\) for all sufficiently large \(i\). We conclude that \(U_0\) is open in \(\sed\), as claimed. \\

\medskip
To prove that \(\B\) is \(\mu_{\sed}\)-JM, observe that
\[
\partial_{\sed}(\B) \subset \left((\sed \setminus \ssed) \cup K\right) \setminus U_0.
\]
By Result~\ref{res 4}, we have \(\mu_{\sed}(\sed \setminus \ssed) = 0\). Therefore, it suffices to show that \(K \setminus U_0\) is \(\mu_{\sed}\)-null. \\

By Result~\ref{res 2:measure}, we have
\[
\mu_{\sed}(K \setminus U_0) = 0 \quad \Longleftrightarrow \quad \mu_{\X}((K \setminus U_0)^{\R^{k+r-1}}) = 0.
\]
Also we have
\[
(K \setminus U_0)^{\R^{k+r-1}} \subset \{g \cdot \Z^d : g \in X\},
\]
where
\begin{align*}
X 
=\, & \bigcup_{i=1}^k \left\{ M \in \SL_d(\R) : \|\rho_i(\varrho_1(M_{,1}))\| = \|\rho_i(\varrho_1(M_{,d}))\| \right\} \\
& \cup 
\bigcup_{j=1}^r \left\{ M \in \SL_d(\R) : \|\rho_j'(\varrho_2(M_{,1}))\| = \|\rho_j'(\varrho_2(M_{,d}))\| \right\},
\end{align*}
where for a matrix \(M \in \SL_d(\R)\), we denote by \(M_{,i}\) its \(i\)th column. \\

Since each condition above defines a proper submanifold of codimension at least one, the set \(X\) has Haar measure zero in \(\SL_d(\R)\). It follows that
\[
\mu_{\X}((K \setminus U_0)^{\R^{k+r-1}}) = 0,
\]
and hence \(\mu_{\sed}(K \setminus U_0) = 0\). This completes the proof that \(\B\) is \(\mu_{\sed}\)-JM.
\end{proof}

\medskip

\begin{lem}
    A Borel subset \( E \subset \BA \) is $\mu_{\BA}$-JM if and only if it is $\mu_{\seda}$-JM when considered as a subset of $\seda$.
\end{lem}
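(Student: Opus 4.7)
The plan is to show directly that the two boundary conditions differ only by a $\mu_{\seda}$–null set. Recall that, by construction, $\mu_{\BA}$ is simply the restriction of $\mu_{\seda}$ to the Borel subset $\BA$; hence for any $A\subset \BA$ one has $\mu_{\BA}(A)=\mu_{\seda}(A)$. The conclusion will therefore reduce to comparing $\partial_{\BA} E$ and $\partial_{\seda} E$, where the subscripts indicate the ambient space in which the boundary is taken.

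The key observation is the set-theoretic inclusion
\begin{equation*}
    \partial_{\BA} E \;\triangle\; \partial_{\seda} E \;\subset\; \partial_{\seda}(\BA).
\end{equation*}
To verify this, first note that for any point $x$ in the $\seda$-interior of $\BA$, a neighbourhood basis of $x$ in $\BA$ is obtained by intersecting a $\seda$-neighbourhood basis with $\BA$, and all sufficiently small such neighbourhoods are already contained in $\BA$. Since $E\subset\BA$, this gives
\begin{equation*}
    \partial_{\BA} E \cap \mathrm{int}_{\seda}(\BA) \;=\; \partial_{\seda} E \cap \mathrm{int}_{\seda}(\BA).
\end{equation*}
On the other hand, $\partial_{\BA} E \subset \BA$ and $\partial_{\seda} E \subset \overline{E}^{\seda} \subset \overline{\BA}^{\seda}$, so the part of either boundary that is \emph{not} in $\mathrm{int}_{\seda}(\BA)$ lies inside $\overline{\BA}^{\seda}\setminus \mathrm{int}_{\seda}(\BA) = \partial_{\seda}(\BA)$. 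The claimed inclusion follows.

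With this inclusion in hand, the proof concludes quickly. By Lemma~\ref{lem: BA is JM} the set $\BA$ is $\mu_{\seda}$–JM, so $\mu_{\seda}(\partial_{\seda}(\BA))=0$. Consequently,
\begin{equation*}
    \mu_{\BA}(\partial_{\BA} E) \;=\; \mu_{\seda}(\partial_{\BA} E) \;\leq\; \mu_{\seda}(\partial_{\seda} E) + \mu_{\seda}(\partial_{\seda}(\BA)) \;=\; \mu_{\seda}(\partial_{\seda} E),
\end{equation*}
and symmetrically $\mu_{\seda}(\partial_{\seda} E) \leq \mu_{\BA}(\partial_{\BA} E)$. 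This gives equality of the two boundary measures and, in particular, the equivalence of their vanishing, which is the desired statement.

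The only point that requires a little care is the topological comparison of the two boundaries; once that is set up, the measure-theoretic step is immediate from the already established fact that $\BA$ itself is $\mu_{\seda}$–JM. There is no serious analytic obstacle: this is essentially a compatibility lemma between the subspace and ambient notions of JM.
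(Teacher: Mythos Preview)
Your proof is correct and follows essentially the same approach as the paper: both arguments compare $\partial_{\BA}E$ and $\partial_{\seda}E$, observe that they can differ only inside $\partial_{\seda}(\BA)$, and then invoke Lemma~\ref{lem: BA is JM} to conclude. The paper phrases this via the chain of inclusions $\partial_{\BA}(E)\subset\partial_{\seda}(E)\subset\partial_{\BA}(E)\cup\partial_{\seda}(\BA)$, while you use the equivalent symmetric-difference formulation; the content is the same.
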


\begin{proof}
    Note that the boundary of $E$ relative to $\BA$ satisfies
    \[
    \partial_{\BA}(E) \subset \partial_{\seda}(E) \subset \partial_{\BA}(E) \cup \partial_{\seda}(\BA).
    \]
    Since $\BA$ is a $\mu_{\seda}$-JM subset of $\seda$ (by Lemma~\ref{lem: BA is JM}), the $\mu_{\seda}$-measure of $\partial_{\seda}(\BA)$ is zero. Therefore, $\mu_{\seda}(\partial_{\seda}(E)) = 0$ if and only if $\mu_{\seda}(\partial_{\BA}(E)) = 0$. The result follows.
\end{proof}

\section{Cross-section Correspondence}
\label{sec: Cross-section Correspondence}
This section is devoted to connecting dynamics on the cross-section we have constructed in the previous sections, with Diophantine approximation. More concretely, we relate successive time tuples $t_i$ for which $a_{t_i}\tilde \Lambda_\theta \in \seda,$ to the best approximates of $\theta$.

\begin{lem}
\label{lem: Cross-section correspondence}
Fix \( \theta \in \Mat \). Let \( (p_l, q_l) \) denote the best approximations to \( \theta \), ordered by increasing \( \|q_l\| \), and satisfying the following conditions:
\begin{align}
    \|\rho_i(p_l + \theta q_l)\| &\neq 0 && \text{for all } 2 \leq i \leq k, \label{eq: con 1} \\
    \|\rho_j'(q_l)\| &\neq 0 && \text{for all } 1 \leq j \leq r, \label{eq: con 2} \\
    \disp(\theta, p_l, q_l) &\geq \|\rho_1(p_l + \theta q_l)\|^{m_1}. \label{eq: con 3}
\end{align}

Define \( t_l = (t_{1,l}, \ldots, t_{k+r-1,l}) \in \R^{k+r-1} \) by
\begin{align}
\label{eq: def t l 1}
    t_{i,l} = 
    \begin{cases}
        -\log \|\rho_i(p_l + \theta q_l)\| & \text{for } 2 \leq i \leq k, \\
        \log \|\rho_{i-k}'(q_l)\| & \text{for } k+1 \leq i \leq k+r-1,
    \end{cases}
\end{align}
and require that
\begin{align}
\label{eq: def t l 2}
    m_1 t_{1,l} + \cdots + m_k t_{k,l} 
    - n_1 t_{k+1,l} - \cdots - n_r t_{k+r-1,l}
    = n_r \log \|\rho_r'(q_l)\|.
\end{align}

Then the map \( (p_l, q_l) \mapsto t_l \) is two-to-one. In fact, if \( (p_l, q_l) \mapsto t_l \), then so does \( (-p_l, -q_l) \). Moreover, for each \( l \), if \( a_{t_l} \tilde \Lambda_\theta \in \sseda \), and $(p_l+\theta q_l, q_l)= \tvector(\pi(a_{t_l} \tilde \Lambda_\theta))$ then
\begin{align}
    \tilde \psi(a_{t_l} \tilde \Lambda_\theta) &= \Theta_1(\theta, p_l, q_l), \label{eq: Image under psi} \\
    \tilde \psi^-(a_{t_l} \tilde \Lambda_\theta) &= \Theta_1(\theta, -p_l, -q_l). \label{eq: Image under psi -}
\end{align}

Define
\[
\Y_T(\theta) := \{ t_l : \|q_l\| \leq e^T \}.
\]
Then we have the equality
\[
N(\tilde \Lambda_\theta, T, \BA) = \Y_T(\theta),
\]
where for a Borel subset \( E \subset \seda \), we define
\begin{align}
\label{def:N(x,T,E)}
    N(x, T, E) := \{ t \in J^T : a_t x \in E \}.
\end{align}
Here, the set \( J^T \) is defined as in~\eqref{eq: def J^T}.

\end{lem}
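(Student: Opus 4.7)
The strategy is a direct geometric computation: the flow $a_{t_l}$ is chosen precisely so as to rescale the anisotropic hypercuboid associated to a best approximation $(p_l, q_l)$ into the isotropic region $\BB_{\disp(\theta, p_l, q_l)}$, after which the best-approximation condition translates exactly into the cross-section condition defining $\B$. I would first verify by direct computation that $a_{t_l}(p_l+\theta q_l, q_l) \in \Le$: using \eqref{eq: def t l 1}, the $i$-th block of this image (for $2 \leq i \leq k$) has norm $e^{t_{i,l}}\|\rho_i(p_l+\theta q_l)\| = 1$, and the $j$-th $\R^n$-block has norm $e^{-t_{k+j,l}}\|\rho_j'(q_l)\| = 1$ for $1 \leq j \leq r-1$; the constraint \eqref{eq: def t l 2} ensures this holds also at $j=r$ via the determinant-one condition defining $a_{t_l}$. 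The first block then has norm exactly $\disp(\theta, p_l, q_l)^{1/m_1} \leq \e^{1/m_1}$, confirming $a_{t_l}(p_l+\theta q_l, q_l) \in \Le$. The two-to-one property is immediate since only the block norms enter \eqref{eq: def t l 1}--\eqref{eq: def t l 2}.

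For the cross-section correspondence, the best-approximation condition says that the hypercuboid $\prod_i B^{m_i}_{\|\rho_i(p_l+\theta q_l)\|} \times \prod_j B^{n_j}_{\|\rho_j'(q_l)\|}$ contains no primitive vectors of $\Lambda_\theta$ other than $\pm(p_l+\theta q_l, q_l)$. Applying $a_{t_l}$ sends this hypercuboid precisely onto $\BB_{\disp(\theta, p_l, q_l)}$, so the two vectors $\pm a_{t_l}(p_l+\theta q_l, q_l)$ are the only primitives of $a_{t_l}\Lambda_\theta$ in that region. As these both lie in $\Le$ with first-block norm $\disp^{1/m_1}$, we conclude $\radius(a_{t_l}\Lambda_\theta) = \disp(\theta, p_l, q_l)$ and $\#\bigl(\BB_{\radius}\cap(a_{t_l}\Lambda_\theta)_\prim\bigr) = 2$, placing $a_{t_l}\tilde\Lambda_\theta$ in $\BA$. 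Conversely, any $t$ with $a_t\tilde\Lambda_\theta \in \BA$ arises in this fashion: the unique pair of primitives $\pm w \in \BB_{\radius}\cap \Le$ have later blocks on unit spheres, so on setting $(p+\theta q, q) := a_t^{-1}w$, the coordinates of $t$ are forced by \eqref{eq: def t l 1}--\eqref{eq: def t l 2}; reversing the scaling shows $(p, q)$ satisfies Definition \ref{def: best general case} together with \eqref{eq: con 1}--\eqref{eq: con 3}.

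For the equality $N(\tilde\Lambda_\theta, T, \BA) = \Y_T(\theta)$, I would translate the defining constraints of $J^T$ in \eqref{eq: def J^T} into norm bounds: the upper bounds $t_{k+j,l} \leq T$ for $1 \leq j \leq r-1$ together with the weighted-sum inequality are equivalent, after exponentiation, to $\|\rho_j'(q_l)\| \leq e^T$ for every $j$, i.e., $\|q_l\| \leq e^T$. Positivity $t_{i,l} \geq 0$ is forced by the best-approximation property applied to the primitive lattice vectors $(e_i, 0)$ and $(0, e_j) \in \Lambda_\theta$: by the norm conventions of Section \ref{sec: Norms} ($\|e_i\|\leq 1$ on each $\R^{m_i}$-factor, and $\min_{x\in\Z^{n_j}\setminus\{0\}}\|x\|>1$), these standard primitives would sit inside the best-approximation hypercuboid unless $\|\rho_i(p_l+\theta q_l)\|<1$ and $\|\rho_j'(q_l)\|>1$, which translate precisely into $t_{i,l}>0$. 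The identities $\tilde\psi(a_{t_l}\tilde\Lambda_\theta) = \Theta_1(\theta, p_l, q_l)$ and its $\tilde\psi^-$ counterpart then follow by unpacking the component maps in \eqref{defpsia}: $\psi_f$ computes to $(p_l, q_l)\in\hZp^d$ after choosing a representative $\gamma\in\SL_d(\Z)$ with $\gamma\bfe_d = (p_l,q_l)^T$; $\psi'$ reads off $\proj(\theta, p_l, q_l)$ together with $\disp(\theta, p_l, q_l)$; and $\psi_{\E_d^1}$ matches $\lambda_1(\theta, p_l, q_l)$ by comparing the unimodularizing rescalings sending $\vector$ to $\bfe_1$. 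The $\tilde\psi^-$ identity encodes the antipodal sign flip corresponding to $(p_l, q_l)\mapsto (-p_l, -q_l)$.

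The main obstacle I anticipate is the careful bookkeeping of signs and of positivity of the $t_{i,l}$: without the norm conventions from Section \ref{sec: Norms} the integer vectors $(e_i, 0)$ and $(0, e_j)$ could slip inside the best-approximation hypercuboid, which would break $t_l\in J^T$. A related subtlety is that condition \eqref{eq: con 3} is used precisely to ensure $t_{1,l}\geq 0$, so that after selecting the correct sign in the pair $\pm(p_l, q_l)$ the image lies in $\Le^+$ rather than merely in $\Le$, matching the convention implicit in $\tvector$.
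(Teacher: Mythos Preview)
Your approach is correct and coincides with the paper's own proof: both proceed by the direct computation showing that $a_{t_l}$ rescales the anisotropic hypercuboid associated to $(p_l,q_l)$ onto $\BB_{\disp(\theta,p_l,q_l)}$, so that the best-approximation condition becomes exactly the defining condition of $\B$, and both read off the constraints of $J^T$ (positivity from the norm conventions of Section~\ref{sec: Norms}, upper bounds from $\|q_l\|\le e^T$, and $t_{1,l}\ge 0$ from \eqref{eq: con 3}).

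One small gap: your claim that the two-to-one property is ``immediate since only the block norms enter'' only establishes that $(p_l,q_l)$ and $(-p_l,-q_l)$ give the same $t_l$; it does not rule out a third preimage. The paper closes this by observing that if a second best approximation $(p',q')$ yields the same $t$, then all block norms $\|\rho_i(p'+\theta q')\|$, $\|\rho_j'(q')\|$ agree with those of $(p,q)$, so $(p'+\theta q',q')$ lies in the hypercuboid for $(p,q)$, and the best-approximation property forces $(p',q')=\pm(p,q)$. This is an easy patch and your overall argument stands.
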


\begin{proof}
Suppose $(p, q)$ and $(p', q')$ are best approximates to $\theta$ satisfying conditions~\eqref{eq: con 1}--\eqref{eq: con 3}, and both map to the same vector $t \in \R^{k+r-1}$. Without loss of generality, assume
\[
\|\rho_1(p' + \theta q')\| \leq \|\rho_1(p + \theta q)\|.
\]
Then, using~\eqref{eq: def t l 1} and~\eqref{eq: def t l 2}, we have
\begin{align*}
\|\rho_i(p + \theta q)\| &= \|\rho_i(p' + \theta q')\| = e^{-t_i} \quad \text{for } 2 \leq i \leq k, \\
\|\rho_j'(q)\| &= \|\rho_j'(q')\| = e^{t_{k+j}} \quad \text{for } 1 \leq j \leq r, \\
\|\rho_r'(q)\| &= \|\rho_r'(q')\| = e^{\frac{m_1 t_1 + \cdots + m_k t_k - n_1 t_{k+1} - \cdots - n_r t_{k+r-1}}{n_r}}.
\end{align*}
Therefore, the point $(p' + \theta q', q')$ lies in the box
\[
B_{\|\rho_1(p + \theta q)\|}^{m_1} \times \cdots \times B_{\|\rho_k(p + \theta q)\|}^{m_k} \times B_{\|\rho_1'(q)\|}^{n_1} \times \cdots \times B_{\|\rho_r'(q)\|}^{n_r}.
\]
Since $(p, q)$ is a best approximation and $(p',q') \neq (0,0)$, this implies $(p', q') \in \{(p, q), (-p, -q)\}$. This proves the first part of the lemma.

The identities in~\eqref{eq: Image under psi} follow directly from the definitions of $\tilde \psi$, $\tilde \psi^-$, and $\Theta_1$.

We now show that $N(\tilde \Lambda_\theta, T, \BA) \subset \Y_T(\theta)$. Let $t = (t_1, \ldots, t_{k+r-1}) \in N(\tilde \Lambda_\theta, T, \BA)$. By definition of $\BA$, there exists $(p, q) \in \Z^m \times \Z^n$ such that
\[
a_t(p + \theta q, q) = \tvector(\pi(a_t \tilde \Lambda_\theta)) \in \Le^+.
\]
Then, by the definition of $\Le^+$, the vector $t$ and the pair $(p, q)$ satisfy~\eqref{eq: def t l 1} and~\eqref{eq: def t l 2}, and conditions~\eqref{eq: con 1} and~\eqref{eq: con 2} are clearly satisfied.

To verify~\eqref{eq: con 3}, note that $t \in (\R_{\geq 0})^k \times [0, T]^{r-1}$ implies $t_1 \geq 0$, hence
\[
\disp(\theta, p, q) \geq \|\rho_1(p + \theta q)\|^{m_1},
\]
so condition~\eqref{eq: con 3} holds.

Now observe that
\[
\radius(\pi(\tilde \Lambda_\theta)) = \|\rho_1(\varrho_1(a_t(p + \theta q)))\|^{m_1},
\]
and since $a_t \tilde \Lambda_\theta \in \BA$, we have
\[
\BB_{\radius(\pi(\tilde \Lambda_\theta))} \cap \Lambda_\prim = \{\pm \tvector(\pi(\tilde \Lambda_\theta))\}.
\]
Applying $a_{-t}$ to both sides implies that the box
\[
B_{\|\rho_1(p + \theta q)\|}^{m_1} \times \cdots \times B_{\|\rho_k(p + \theta q)\|}^{m_k} 
\times B_{\|\rho_1'(q)\|}^{n_1} \times \cdots \times B_{\|\rho_r'(q)\|}^{n_r}
\]
contains no nonzero point of $\Lambda_\theta = \pi(\tilde \Lambda_\theta)$. Thus $(p, q)$ is a best approximation, and hence $t \in \Y_T(\theta)$. This proves $N(\tilde \Lambda_\theta, T, \BA) \subset \Y_T(\theta)$. The reverse inclusion $ \Y_T(\theta) \subset N(\tilde \Lambda_\theta, T, \BA) $ follows by retracing the above steps, along with the assumption on the choice of norms fixed in Section~\ref{sec: Norms}.

This completes the proof.
\end{proof}


\section{Cross-section Genericity}
\label{sec: Cross-section Genericity}

In the previous section, we explored the connection between the distribution of best approximates and the distribution of visits of the trajectory $\{a_t\tilde{\Lambda}_\theta : t \in \R^{k+r-1}\}$ to the set $\BA$. By Theorem~\ref{muJM}, the latter requires studying the quantity
\begin{align}
    \label{eq: abcdef 1}
    \frac{1}{m_{\R^{k+r-1}}(J^T)} \# N(x, T, E),
\end{align}
as $T \to \infty$, where $E$ ranges over $\mu_{\BA}$-JM subsets of $\BA$.  
This section aims to show that a point $x \in \XA$ is $(a_t, \mu_{\XA})$-generic if and only if the limit in \eqref{eq: abcdef 1} exists and equals $\mu_{\seda}(E)$—a property we refer to as \emph{cross-section genericity}. \\
\medskip

We start with \textbf{only if} part.

\begin{lem}[Birkhoff Genericity implies Cross-section Genericity]
\label{lem:BG implies CG}
Suppose $x \in \BA$ is $(a_t, \mu_{\XA})$-generic. Then for every $\mu_{\BA}$-JM subset $E \subset \BA$, we have
\begin{align}
    \lim_{T \to \infty} \frac{1}{m_{\R^{k+r-1}}(J^T)} \# N(x, T, E) = \mu_{\BA}(E).
\end{align}
\end{lem}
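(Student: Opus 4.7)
The plan is a multi-parameter Kakutani-style thickening argument: replace the discrete count $\#N(x,T,E)$ by the integral of $\ind_{E^{I_\tau}}$ along the trajectory $\{a_u x\}_{u\in J^T}$, where $E^{I_\tau}$ denotes the $I_\tau$-thickening of $E$ inside $\XA$ with $I_\tau$ as in \eqref{eq: def: I tau}. The $(a_t,\mu_{\XA})$-genericity of $x$ then forces the time average to converge to $\mu_{\XA}(E^{I_\tau})$, which Result~\ref{res: 3} identifies with $\mu_{\seda}(E)\,\tau^{k+r-1}$ when $E$ sits inside the well-separated part $(\seda)_{\geq\delta}$; dividing by $\tau^{k+r-1}$ then recovers the desired cross-section average.

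To carry this out cleanly, I fix $\delta>0$ and decompose $E = E_{\geq\delta} \sqcup E_{<\delta}$ with $E_{\geq\delta} = E \cap (\seda)_{\geq\delta}$; both pieces are $\mu_{\seda}$-JM by Result~\ref{res: 3}. For $\tau<\delta$, the definition of $(\seda)_{\geq\delta}$ forces the translates $t+I_\tau$ and $t'+I_\tau$ for distinct $t,t' \in N(x,T,E_{\geq\delta})$ to be disjoint, so with $N^\infty = \{t\in\R^{k+r-1} : a_t x \in E_{\geq\delta}\}$ one has
\begin{align*}
\int_{J^T} \ind_{E_{\geq\delta}^{I_\tau}}(a_u x)\,du
= \sum_{t \in N^\infty} m_{\R^{k+r-1}}\!\bigl((t+I_\tau)\cap J^T\bigr),
\end{align*}
and the right-hand side agrees with $\tau^{k+r-1}\#N(x,T,E_{\geq\delta})$ up to a boundary contribution from cross-section visits within $\ell^\infty$-distance $\tau$ of $\partial J^T$; Lemma~\ref{lem:Tempered} bounds this contribution by $O(T^{k+r-2}) = o(m_{\R^{k+r-1}}(J^T))$. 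The set $E_{\geq\delta}^{I_\tau}$ is $\mu_{\XA}$-JM (its boundary is contained in the $I_\tau$-translate of $\partial_{\seda}(E_{\geq\delta})$, which is $\mu_{\XA}$-null by a variant of Result~\ref{res 6} combined with Result~\ref{res: 3}), so applying Theorem~\ref{muJM} together with $(a_t,\mu_{\XA})$-genericity of $x$ and the identity $\mu_{\XA}(E_{\geq\delta}^{I_\tau}) = \mu_{\seda}(E_{\geq\delta})\,\tau^{k+r-1}$ from Result~\ref{res: 3} yields
\begin{align*}
\lim_{T\to\infty}\frac{\#N(x,T,E_{\geq\delta})}{m_{\R^{k+r-1}}(J^T)} = \mu_{\seda}(E_{\geq\delta}).
\end{align*}

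For the residual piece $E_{<\delta}$ the disjointness of translates is lost, so instead I would extract a uniform multiplicity bound $M'$ for the cover $\{t+I_\tau\}_{t \in N^\infty}$ from Lemma~\ref{lem:Tempered}, applied after covering $I_\tau$ by finitely many translates of $\Delta$. This upgrades the thickening inequality to
\begin{align*}
\tau^{k+r-1}\#N(x,T,E_{<\delta}) \leq M'\int_{J^T+I_\tau}\ind_{E_{<\delta}^{I_\tau}}(a_u x)\,du,
\end{align*}
and coupled with the analogous multiplicity bound $\mu_{\XA}(E_{<\delta}^{I_\tau}) \leq M'\mu_{\seda}(E_{<\delta})\tau^{k+r-1}$ gives $\limsup_T \#N(x,T,E_{<\delta})/m_{\R^{k+r-1}}(J^T) \leq C\,\mu_{\seda}(E_{<\delta})$. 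Since $\bigcap_{\delta>0}(\seda)_{<\delta}$ consists of points where $0$ is an accumulation point of $\tilde T_\e$—a $\mu_{\seda}$-null set by Result~\ref{res 1:discreteness}—one has $\mu_{\seda}(E_{<\delta}) \downarrow 0$ as $\delta\to 0$, and taking the limits $T\to\infty$ then $\delta\to 0$ in that order completes the proof, using $\mu_{\seda}(E) = \mu_{\BA}(E)$. The main obstacle is precisely this last step: the absence of a disjointness property on $(\seda)_{<\delta}$ forces reliance on the multi-parameter tempered bound of Lemma~\ref{lem:Tempered} rather than the clean first-return/Kac picture available in one parameter, which is why the cross-section framework developed in~\cite{aggarwalghosh2024joint} is essential here.
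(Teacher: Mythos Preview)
Your proposal is correct and follows essentially the same approach as the paper: decompose $E$ into its well-separated part $E_{\geq\delta}$ and the remainder $E_{<\delta}$, handle the former by an exact thickening argument (the paper thickens by $J^\alpha$ and invokes Result~\ref{res 6} directly rather than your $I_\tau$, but this is cosmetic), control the latter by a $\limsup$ bound extracted from the tempered property of Lemma~\ref{lem:Tempered} (the paper packages this as a separate auxiliary lemma using $\Delta$-thickening directly rather than covering $I_\tau$ by translates of $\Delta$), and then let $\delta\to 0$. One small imprecision: your description of $\partial_{\XA}(E_{\geq\delta}^{I_\tau})$ as lying in the $I_\tau$-translate of $\partial_{\seda}(E_{\geq\delta})$ omits the contribution from $\partial I_\tau$, but the JM conclusion is nonetheless correct and is exactly what Result~\ref{res 6} (or its obvious variant with $I_\tau$ in place of $J^\tau$) provides.
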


We first prove an auxiliary estimate:

\begin{lem}
\label{lem: tempered implies bound}
Suppose $x \in \BA$ is $(a_t, \mu_{\XA})$-generic. Then for every $\mu_{\BA}$-JM subset $E \subset \BA$, we have
\begin{align}
    \limsup_{T \to \infty} \frac{1}{m_{\R^{k+r-1}}(J^T)} \# N(x, T, E) \leq \frac{M}{m_{\R^{k+r-1}}(\Delta)} \mu_{\XA}(E^\Delta),
\end{align}
where $\Delta$ is as defined in~\eqref{eq: def L}.
\end{lem}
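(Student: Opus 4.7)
The plan is to prove the bound via a double-counting (or ``transfer'') argument relating visits to the cross-section $\BA$ to ambient Birkhoff averages on $\XA$. First I would observe that for every $t \in N(x,T,E)$ and every $\sigma \in \Delta$, the translate $a_{t+\sigma} x = a_\sigma (a_t x)$ lies in $E^\Delta$ (since $a_t x \in E$) and $t+\sigma \in J^T + \Delta$. Computing $\int_\Delta d\sigma$ against the counting measure on $N(x,T,E)$ therefore gives the identity
\[
m_{\R^{k+r-1}}(\Delta)\cdot\#N(x,T,E) \;=\; \int_{J^T+\Delta} \#\bigl\{t \in N(x,T,E): s-t \in \Delta\bigr\}\cdot\mathbf{1}_{E^\Delta}(a_s x)\,ds,
\]
because the integrand vanishes on the complement of $\{s\in J^T+\Delta : a_s x \in E^\Delta\}$.

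The crucial step is to bound the integer multiplicity $\#\{t \in N(x,T,E): s-t \in \Delta\}$ uniformly in $s$ by the constant $M$ of Lemma~\ref{lem:Tempered}. For fixed $s$, any two distinct elements $t_1,t_2$ of this set both satisfy $a_{t_i} x \in E \subset \BA$, and their difference $t_2-t_1 = (s-t_1)-(s-t_2)$ lies in $\Delta - \Delta$. Setting $z = a_{t_1} x \in \BA$, we have $a_{t_2-t_1} z \in \BA$. The pigeonhole-plus-Minkowski argument used to prove Lemma~\ref{lem:Tempered}---placing the short primitive representatives $\tvector(a_{t_i} z)$ into a bounded hyper-box and producing a pair whose difference lies in $\BB_{\radius(z)}^\circ$---applies essentially verbatim on $\Delta-\Delta$, since the box dimensions only change by a bounded factor absorbable into the choice of $M$. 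This yields the pointwise bound and hence
\[
m_{\R^{k+r-1}}(\Delta)\cdot\#N(x,T,E) \;\leq\; M\int_{J^T+\Delta}\mathbf{1}_{E^\Delta}(a_s x)\,ds.
\]

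Finally I would divide by $m_{\R^{k+r-1}}(J^T)$ and take $T \to \infty$. Since $\Delta$ is bounded while $m_{\R^{k+r-1}}(J^T)$ grows polynomially of order $T^{k+r-1}$, we have $m_{\R^{k+r-1}}(J^T+\Delta)/m_{\R^{k+r-1}}(J^T) \to 1$. By Result~\ref{res 6} (or a direct adaptation of its proof to the bounded region $\Delta$, whose boundary has Lebesgue measure zero), the set $E^\Delta$ is $\mu_{\XA}$-JM given that $E$ is $\mu_{\seda}$-JM. The Birkhoff genericity of $x$ (Definition~\ref{def: generic flow}) together with Theorem~\ref{muJM} then gives
\[
\lim_{T\to\infty}\frac{1}{m_{\R^{k+r-1}}(J^T)}\int_{J^T+\Delta}\mathbf{1}_{E^\Delta}(a_s x)\,ds \;=\; \mu_{\XA}(E^\Delta),
\]
from which the claimed $\limsup$ bound follows. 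The principal obstacle is the multiplicity bound: Lemma~\ref{lem:Tempered} controls returns to $\BA$ within $\Delta$, whereas here one needs the analogous control within the larger difference set $\Delta-\Delta$. The geometric packing argument of that lemma does extend to this larger bounded region with the same constant $M$ (or at worst an explicit multiple, harmless for a $\limsup$ inequality); all other steps are standard within the cross-section framework already set up in Section~\ref{sec: Results borrowed}.
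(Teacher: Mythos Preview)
Your proposal is correct and follows essentially the same transfer argument as the paper: the paper writes
\[
\#N(x,T,E)=\frac{1}{m_{\R^{k+r-1}}(\Delta)}\sum_{t\in N(x,T,E)} m_{\R^{k+r-1}}(t+\Delta)\;\le\;\frac{M}{m_{\R^{k+r-1}}(\Delta)}\,m_{\R^{k+r-1}}\Bigl(\bigcup_{t\in N(x,T,E)}(t+\Delta)\Bigr),
\]
bounds the union by $\{s\in J^T:a_s x\in E^\Delta\}$ plus a boundary remainder $o(T^{k+r-1})$, checks that $E^\Delta$ is $\mu_{\XA}$-JM (citing an adaptation of the relevant lemma from the companion paper, just as you invoke an adaptation of Result~\ref{res 6}), and applies genericity together with Theorem~\ref{muJM}. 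Your integral formulation with the multiplicity weight is the same computation rewritten via Fubini, and your use of $J^T+\Delta$ in place of $J^T$ plus a remainder is an equivalent way to handle the boundary.

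Your diagnosis of the ``principal obstacle'' is astute: the covering inequality above requires that each $s$ lie in at most $M$ of the translates $t+\Delta$, i.e.\ a bound on $\#\{t\in N(x,T,E):s-t\in\Delta\}$, and after choosing a base point among these $t$'s the relative displacements lie in a difference set like $\Delta-\Delta$ rather than $\Delta$ itself. The paper simply cites Lemma~\ref{lem:Tempered} for this step without comment; as you say, the pigeonhole-in-a-box argument of that lemma goes through on the enlarged bounded region with at worst a larger constant, which is harmless for the $\limsup$ inequality (and for the only application, Lemma~\ref{lem:BG implies CG}). So you have in fact been slightly more careful than the paper on this point.
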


\begin{proof}
Let \(E\subset\BA\) be a \(\mu_{\BA}\)\nobreakdash–JM subset. By definition of \(N(x,T,E)\) we have
\begin{align*}
    \#N(x,T,E)
&= \frac{1}{m_{\R^{k+r-1}}(L)}
  \sum_{t\in N(x,T,E)} m_{\R^{k+r-1}}\bigl(\{\,t + \ell : \ell\in\Delta\}\bigr) \\
  &\leq \frac{M}{m_{\R^{k+r-1}}(L)}\;
    m_{\R^{k+r-1}}\Bigl(\bigcup_{t\in N(x,T,E)}\{t+\ell:\ell\in\Delta\}\Bigr),
\end{align*}
where last inequality follows from Lemma~\ref{lem:Tempered}, and \(\Delta\) is defined as in \eqref{eq: def L}. Observe that
\[
\bigcup_{t\in N(x,T,E)}\{t+\ell:\ell\in\Delta\}
= \bigl(J^T\cap\bigcup_{t\in N(x,T,E)}\{t+\ell:\ell\in\Delta\}\bigr)
  \;\cup\;R_T,
\]
where the remainder \(R_T\) has measure \(o(T^{k+r-1})\) as \(T\to\infty\).  But
\[
J^T\cap\bigcup_{t\in N(x,T,E)}\{t+\ell:\ell\in\Delta\}
= \{\,t\in J^T: a_t x\in E^\Delta\},
\]
so altogether
\begin{align}
\label{eq: aa 1}
    \#N(x,T,E)
\le \frac{M}{m_{\R^{k+r-1}}(L)}\;
    \Bigl(\;
      m_{\R^{k+r-1}}\{\,t\in J^T: a_t x\in E^\Delta\}
      +o(T^{k+r-1})
    \Bigr).
\end{align}
Since \(E\subset\BA\) is \(\mu_{\seda}\)\nobreakdash–JM, using steps similar to \cite[Lemma~9.3]{aggarwalghosh2024joint}, one can show that \(E^\Delta\subset\XA\) is \(\mu_{\XA}\)\nobreakdash–JM.  Hence, by the \((a_t,\mu_{\XA})\)\nobreakdash–genericity of \(x\) and Theorem~\ref{muJM}, the equation \ref{eq: aa 1} implies that
\[
\limsup_{T\to\infty}
\frac{\#N(x,T,E)}{m_{\R^{k+r-1}}(J^T)}
\;\le\;
\frac{M}{m_{\R^{k+r-1}}(L)}
\;\mu_{\XA}(E^\Delta),
\]
as claimed.
\end{proof}

\begin{proof}[Proof of Lemma~\ref{lem:BG implies CG}]
Fix a $\mu_{\seda}$-JM subset $E \subset \seda$. By Result \ref{res: 3} and Lemma~\ref{JMintersect}, for any small enough $\alpha > 0$, the set $E \cap (\seda)_{\geq \alpha}$ is $\mu_{\seda}$-JM. By Result \ref{res 6}, the set $(E \cap (\seda)_{\geq \alpha})^{J^\alpha}$ is $\mu_{\XA}$-JM. Therefore by Theorem~\ref{muJM} and Birkhoff genericity of $x$, we have
\begin{align}
\label{eq: bb 1}
    \lim_{T \to \infty} \frac{1}{m_{\R^{k+r-1}}(J^T)} \int_{J^T} \ind_{(E \cap (\seda)_{\geq \alpha})^{J^\alpha}}(a_t x) \, dt
    = \mu_{\XA}\left((E \cap (\seda)_{\geq \alpha})^{J^\alpha}\right).
\end{align}
Note that
\begin{align}
    \label{eq: bb 2}
    \int_{J^T} \ind_{(E \cap (\seda)_{\geq \alpha})^{J^\alpha}}(a_t x) \, dt
= m_{\R^{k+r-1}}(J^\alpha) \cdot \# N(x, T, E \cap (\seda)_{\geq \alpha}) + O(T^{k+r-2}).
\end{align}
Therefore equations \eqref{eq: bb 1} and \eqref{eq: bb 2} together imply that
\begin{align}
\label{eq: waste w 1}
    \lim_{T \to \infty} \frac{1}{m_{\R^{k+r-1}}(J^T)} \# N(x, T, E \cap (\seda)_{\geq \alpha})
    = \mu_{\seda}(E \cap (\seda)_{\geq \alpha}).
\end{align}
Now consider:
\begin{align*}
    \mu_{\seda}(E \cap (\seda)_{\geq \alpha})
    &= \lim_{T \to \infty} \frac{1}{m_{\R^{k+r-1}}(J^T)} \# N(x, T, E \cap (\seda)_{\geq \alpha}) \\
    &\leq \liminf_{T \to \infty} \frac{1}{m_{\R^{k+r-1}}(J^T)} \# N(x, T, E) \\
    &\leq \limsup_{T \to \infty} \frac{1}{m_{\R^{k+r-1}}(J^T)} \# N(x, T, E) \\
    &\leq \mu_{\seda}(E \cap (\seda)_{\geq \alpha}) + \frac{M}{m_{\R^{k+r-1}}(L)} \mu_{\XA}((E \cap (\seda)_{< \alpha})^\Delta),
\end{align*}
where the last inequality uses \eqref{eq: waste w 1} and Lemma~\ref{lem: tempered implies bound}. Since \(\alpha>0\) was arbitrary and the error term 
\[
 \frac{M}{m_{\R^{k+r-1}}(L)} \mu_{\XA}((E \cap (\seda)_{< \alpha})^\Delta)
\]
tends to zero as \(\alpha\to 0\), we deduce
\[
\lim_{T \to \infty}
\frac{1}{m_{\R^{k+r-1}}(J^T)}\,\#N(x,T,E)
\;=\;\mu_{\seda}(E).
\]
Hence proved.
\end{proof}

\medskip

We now prove the \textbf{if} part. We note that this result is not required for the proofs of the main theorems, but we include it here for completeness.
\begin{lem}[Cross-section Genericity implies Birkhoff Genericity]
\label{lem:CG implies BG}
Suppose $x \in \BA$ is such that for every $\mu_{\BA}$-JM subset $E \subset \BA$, we have
\begin{align*}
    \lim_{T \to \infty} \frac{1}{m_{\R^{k+r-1}}(J^T)} \# N(x, T, E) = \mu_{\BA}(E).
\end{align*} 
Then $x$ is $(a_t, \mu_{\XA})$-generic.
\end{lem}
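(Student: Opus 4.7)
The plan is to reverse the argument of Lemma~\ref{lem:BG implies CG}: instead of using Birkhoff genericity to control visits to a cross-section, we use control of cross-section visits to deduce Birkhoff equidistribution. The first step would be to establish Birkhoff equidistribution for indicator functions of \emph{flow boxes} over $\BA$, and the second to approximate arbitrary $\mu_{\XA}$-JM sets by finite unions of such flow boxes.

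For the first step, fix $\alpha > 0$ and a $\mu_{\BA}$-JM subset $E \subset \BA \cap (\seda)_{\geq \alpha}$, and set $I_\alpha := [0,\alpha]^{k+r-1}$. Lemma~\ref{JMintersect}, Result~\ref{res: 3}, and Lemma~\ref{lem: BA is JM} together imply that $E$ is $\mu_{\seda}$-JM, so by Result~\ref{res 6} the flow box $E^{I_\alpha}$ is $\mu_{\XA}$-JM, and by Result~\ref{res: 3} has measure $\alpha^{k+r-1}\mu_{\BA}(E)$. Since any two distinct visits of the trajectory to $E$ are separated by at least $2\alpha$ in the $\ell^\infty$-norm, the flow-box parametrisation $(s,u) \mapsto a_{s+u}x$ is injective on $N(x,T,E) \times I_\alpha$, and so
\begin{align*}
    \int_{J^T} \ind_{E^{I_\alpha}}(a_t x) \, dt
    \;=\; \sum_{s \in N(x,T,E)} m_{\R^{k+r-1}}\bigl((s+I_\alpha)\cap J^T\bigr)
    \;=\; \alpha^{k+r-1}\,\#N(x,T,E) + R_T,
\end{align*}
where the error $R_T$ only counts visits lying within distance $\alpha$ of $\partial J^T$ together with boundary overflows, and is $o(m_{\R^{k+r-1}}(J^T))$ by the cross-section genericity hypothesis applied to $E$ (comparing $\#N(x,T\pm\alpha,E)$). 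Dividing by $m_{\R^{k+r-1}}(J^T)$ and invoking the hypothesis yields Birkhoff equidistribution for $\ind_{E^{I_\alpha}}$.

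For the second step, I would use that $\BA$ is a cross-section for the $(a_t)$-flow on $\XA$ (Result~\ref{res 1:discreteness} combined with Lemma~\ref{lem: BA is JM}) to realise $\XA$, up to a $\mu_{\XA}$-null set, as a multi-parameter suspension over $\BA$: for every $\epsilon > 0$ there exist $\alpha > 0$ and a countable Borel partition of an open subset $\XA' \subset \XA$ with $\mu_{\XA}(\XA \setminus \XA') < \epsilon$ into pieces of the form $a_{s_i}\bigl(E_i^{I_\alpha}\bigr)$, where each $E_i \subset \BA \cap (\seda)_{\geq \alpha}$ is $\mu_{\BA}$-JM and $s_i \in \R^{k+r-1}$. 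Since the convergence established in step one is translation-invariant, it extends to each such translated piece, and a standard tight-convergence approximation (Theorem~\ref{muJM}) then delivers Birkhoff equidistribution for every bounded continuous function on $\XA$, which is precisely the $(a_t,\mu_{\XA})$-genericity of $x$. The principal obstacle is the suspension construction in the multi-parameter setting: the classical one-parameter Ambrose--Kakutani argument using a first-return map is unavailable, and the flow-box partition must be assembled by hand using the multi-parameter cross-section machinery of~\cite{aggarwalghosh2024joint} together with the local product structure provided by Result~\ref{res: 3}.
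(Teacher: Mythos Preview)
Your Step~1 is correct and closely mirrors the paper's core flow-box calculation. The difference lies in Step~2: you aim to prove \emph{exact} Birkhoff limits for all $\mu_{\XA}$-JM sets by constructing an explicit countable partition of (almost all of) $\XA$ into translated flow boxes, and you rightly flag this multi-parameter suspension construction as the principal obstacle. The paper sidesteps this obstacle entirely by switching to the Portmanteau characterisation of tight convergence: since the averaging measures and $\mu_{\XA}$ are all probability measures, it suffices to show
\[
\liminf_{T\to\infty}\frac{1}{m_{\R^{k+r-1}}(J^T)}\int_{J^T}\ind_U(a_t x)\,dt \;\ge\;\mu_{\XA}(U)
\]
for every \emph{open} $U\subset\XA$. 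The hypothesis combined with Portmanteau gives the corresponding lower bound $\liminf \#N(x,T,E)/m_{\R^{k+r-1}}(J^T)\ge\mu_{\BA}(E)$ for every open $E\subset\BA$, and then Result~\ref{res 5} (openness of the flow-box map over $\ueda$) furnishes a topological basis of translated flow boxes $a_s(E^I)$ with $E,I$ open, on which the same computation as your Step~1 yields the lower bound. Passing from basis elements to arbitrary open $U$ then requires only monotonicity and countable unions, not a disjoint partition.

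In short: your route would work but forces you to build the multi-parameter Ambrose--Kakutani partition by hand; the paper's route trades exact limits for one-sided lower bounds and thereby reduces the passage from flow boxes to general sets to a topological-basis argument, avoiding the partition construction altogether. The key extra ingredient the paper exploits, which your proposal does not use, is the openness statement in Result~\ref{res 5}(2).
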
\begin{proof}
Let \( x \in \XA \) satisfy the hypothesis of the lemma. By Theorem~\ref{muJM}, we have
\[
\lim_{T \to \infty} \frac{1}{m_{\R^{k+r-1}}(J^T)} \sum_{t \in N(x, T, \BA)} \delta_{a_t x} = \mu_{\BA},
\]
which implies that for any open subset \( E \subset \BA \), 
\begin{equation} \label{eq:CSG}
\liminf_{T \to \infty} \frac{1}{m_{\R^{k+r-1}}(J^T)} \# N(x, T, E) \geq \mu_{\BA}(E).
\end{equation} 
\medskip

To deduce that \( x \) is \( (a_t, \mu_{\XA}) \)\nobreakdash–generic, we will show that for any open subset \( U \subset \XA \),
\begin{equation} \label{eq:flow-lower}
\liminf_{T \to \infty} \frac{1}{m_{\R^{k+r-1}}(J^T)} \int_{J^T} \delta_{a_t x}(U) \, dt \geq \mu_{\XA}(U).
\end{equation}
\medskip

\textbf{Step 1: Reduction to open sets in the saturated cross-section.}
By ergodicity of the \( a_t \)\nobreakdash–action on \( \XA \), and using Result~\ref{res 2:measure}, Result~\ref{res 5}, and the fact that \( \mu_{\seda}(\BA) > 0 \) (Lemma~\ref{lem: BA is JM}), the set
\[
(\BA \cap \ueda)^{\R^{k+r-1}} = \left\{ a_s y : y \in \BA \cap \ueda,\, s \in \R^{k+r-1} \right\}
\]
has full \( \mu_{\XA} \)\nobreakdash–measure. Hence, it suffices to verify~\eqref{eq:flow-lower} for every open set \( U \subset (\BA \cap \ueda)^{\R^{k+r-1}} \). \medskip

\textbf{Step 2: A basis of open sets.}
By Result~\ref{res 5} and fact that $\BA \cap \sseda$ is open in $\seda$ (proof of Lemma \ref{lem: BA is JM}), we have $\BA \cap \ueda$ is open in $\seda$. Therefore again by Result~\ref{res 5}, a basis for the topology on \( (\BA \cap \ueda)^{\R^{k+r-1}} \) is given by sets of the form
\[
a_s(E^I),
\]
where \( s \in \R^{k+r-1} \), \( I \subset [-\alpha, \alpha]^{k+r-1} \) is open, and \( E \subset \BA \cap \ueda \cap (\seda)_{\ge \alpha} \) is open in \( \seda \), for some $\alpha>0$. Moreover again by Result~\ref{res 5}, each such \( E \) is open in \( \BA \) and hence satisfies~\eqref{eq:CSG}.

\medskip

\textbf{Step 3: Verifying~\eqref{eq:flow-lower} on basis elements.}
Fix \( s, \alpha, E, I \) as above, and set \( U = a_s(E^I) \). Then,
\[
\int_{J^T} \delta_{a_t x}(U)\, dt
= \int_{J^T} \ind_{E^I}(a_{t-s} x)\, dt
= \int_{J^T - s} \ind_{E^I}(a_t x)\, dt.
\]
Since $$ \mathrm{vol}(J^T \setminus (J^T - s)) + \mathrm{vol}( (J^T - s) \setminus J^T )  = o(T^{k+r-1}), $$ it follows that
\[
\int_{J^T} \delta_{a_t x}(U)\, dt
= \int_{J^T} \ind_{E^I}(a_t x)\, dt + o(T^{k+r-1})
= m_{\R^{k+r-1}}(I) \cdot \# N(x, T, E) + o(T^{k+r-1}).
\]
Dividing both sides by \( m_{\R^{k+r-1}}(J^T) \), taking the \( \liminf \), and invoking~\eqref{eq:CSG}, we obtain
\[
\liminf_{T \to \infty} \frac{1}{m_{\R^{k+r-1}}(J^T)} \int_{J^T} \delta_{a_t x}(U)\, dt 
\ge m_{\R^{k+r-1}}(I) \cdot \mu_{\BA}(E)
= \mu_{\XA}(a_s(E^I)),
\]
where the final equality uses Result~\ref{res 2:measure} and \( a_t \)\nobreakdash–invariance of \( \mu_{\XA} \).

\medskip

Since any open \( U \subset (\BA \cap \ueda)^{\R^{k+r-1}} \) is a countable disjoint union of such basis elements, the lower bound~\eqref{eq:flow-lower} holds for all such \( U \). Hence, \( x \) is \( (a_t, \mu_{\XA}) \)\nobreakdash–generic, as required.
\end{proof}


\section{Final Proofs}
\label{sec: Final proofs}

Before proceeding with the final proofs, we record the following observation. 

It is sufficient to prove Theorems \ref{main thm best}, \ref{Main thm time visits}, and Corollaries \ref{thm:intro 1}, \ref{cor: determinant} for the case $j = 1$. Specifically, for Theorem \ref{main thm best}, consider the map
\[
    \phi_{j1} \colon \ZZ_1 \longrightarrow \ZZ_j
\]
defined by
\[
    \phi_{j1}(\Lambda, x, \gamma, v) = \left( A^j(x, \gamma) A^1(x, \gamma)^{-1} \Lambda,\, x,\, \gamma,\, v \right),
\]
where $\Lambda \in \E_d^1$, $x \in \Sphere^{m_1} \times \cdots \times \Sphere^{n_r}$, $\gamma \in [0, \varepsilon]$, and $v \in \hZp^d$. Here, $A^j$ and $A^1$ are as defined in Section~\ref{sec: The Relative Size}.

Observe that this map transforms the measure $\tmu^d$ to the measure $\tmu^j$ and satisfies
\[
    \phi_{j1} \circ \Theta_1 = \Theta_j.
\]
Therefore, taking $\tnu^j = (\phi_{j1})_* \tnu^1$ completes the reduction of Theorem~\ref{main thm best} to the case $j=1$.

A similar argument applies to Theorem~\ref{Main thm time visits} and Corollaries~\ref{thm:intro 1} and~\ref{cor: determinant}. Hence, in the upcoming proofs, we will restrict ourselves to the case $j = 1$.

\begin{proof}[Proof of Theorem \ref{main thm best}]
    \noindent\textbf{Claim.} For Lebesgue–almost every $\theta$, the best approximates $(p_l, q_l)$ appearing in the sum on the left-hand side of \eqref{eq: main thm general 2} that do not satisfy \eqref{eq: con 1}, \eqref{eq: con 2}, or \eqref{eq: con 3} contribute negligibly. To see this, we proceed in three steps:

\medskip

\noindent\textbf{1. Condition \eqref{eq: con 1} holds almost surely.}  
The set of $\theta \in \Mat$ for which \eqref{eq: con 1} fails is contained in a countable union of rational hyperplanes. Since each such hyperplane has Lebesgue measure zero, it follows that condition \eqref{eq: con 1} holds for Lebesgue–almost every $\theta$.

\medskip

\noindent\textbf{2. Negligible contribution from approximates failing \eqref{eq: con 2}.}  
Suppose $(p,q)$ is a best approximation of $\theta$ satisfying \eqref{eq: con 1} but violating \eqref{eq: con 2}. Let $I$ denote the set of indices $j$ for which \eqref{eq: con 2} fails. Without loss of generality, assume $I = \{s, s+1, \dots, r\}$. Write $q' = (q_1, \dots, q_{s-1})$, where $q_j = \rho_j'(q)$, and let $\theta'$ be the $m \times (n_1 + \cdots + n_{s-1})$ matrix obtained by removing the last $n_s + \cdots + n_r$ columns of $\theta$.

By the definition of best approximation, the lattice
\[
\begin{pmatrix}
I_m & \theta' \\ & I_{n_1 + \cdots + n_{s-1}}
\end{pmatrix}
\Z^{m + n_1 + \cdots + n_{s-1}}
\]
contains no nonzero point in the region
\[
B_{\|\rho_1(p + \theta' q')\|}^{m_1} \times \cdots \times B_{\|\rho_k(p + \theta' q')\|}^{m_k}
\times B_{\|q_1\|}^{n_1} \times \cdots \times B_{\|q_{s-1}\|}^{n_{s-1}}.
\]
By Minkowski’s second theorem, this implies
\begin{equation}
\label{123}
\left( \prod_{i=1}^k \|\rho_i(p + \theta' q')\|^{m_i} \right)
\left( \prod_{j=1}^{s-1} \|q_j\|^{n_j} \right) \leq c,
\end{equation}
for some constant $c > 0$ depending only on the choice of norms.

Again, from the definition of best approximation and the definition of $I$, we have
\begin{align}
\label{124}
\|\rho_i(p + \theta' q')\| &\leq 1 \quad \text{for all } 1 \leq i \leq k, \\
\label{125}
\|q_j\| &\neq 0 \quad \text{for all } 1 \leq j < s.
\end{align}
But the number of such approximates $(p, q')$ satisfying \eqref{123}--\eqref{125} is of order $o(T^{k + r - 1})$ for Lebesgue–almost every $\theta$ (see for e.g. \cite[Thm.~1.3]{aggarwalghosh2024joint}). Hence, these may be ignored in the asymptotic count in \eqref{eq: main thm general 2}.

\medskip

\noindent\textbf{3. Negligible contribution from approximates failing \eqref{eq: con 3}.}  
Write
\[
\theta =
\begin{pmatrix}
\theta_1 \\ \theta_2
\end{pmatrix}, \qquad
\theta_1 \in \M_{m_1 \times n}(\R), \quad
\theta_2 \in \M_{(m - m_1) \times n}(\R).
\]
Suppose $(p, q)$ is a best approximation satisfying \eqref{eq: con 1} and \eqref{eq: con 2}, but violating \eqref{eq: con 3}. Write $p = (p', p'')$, where $p' = \rho_1(p)$ and $p''$ denotes the remaining components. Then, from the assumptions, we have
\begin{align*}
\|(p'' + \theta_2 q)_i\| &\leq 1 \quad \text{for all } 2 \leq i \leq k, \\
0< \left( \prod_{i=2}^k \|(p'' + \theta_2 q)_i\|^{m_i} \right)
\left( \prod_{j=1}^r \|\rho_j'(q)\| \right) &< 1,
\end{align*}
where $(p'' + \theta_2 q)_i$ denotes the projection onto the $\R^{m_i}$ component of $\R^{m - m_1} = \R^{m_2} \times \cdots \times \R^{m_k}$.

Again the number of such approximates is of order $o(T^{k + r - 1})$ for Lebesgue–almost every $\theta$ (see for e.g. \cite[Thm.~1.3]{aggarwalghosh2024joint}). Hence, these may also be ignored in \eqref{eq: main thm general 2}.

\medskip

\noindent Fix such a $\theta$ as in the claim above—namely, one for which it suffices to consider only the best approximates $(p_l, q_l)$ satisfying \eqref{eq: con 1}, \eqref{eq: con 2}, and \eqref{eq: con 3}. Moreover, assume that $\theta$ is of generic type with respect to the decomposition $m = m_1 + \cdots + m_k$, $n = n_1 + \cdots + n_r$; this again holds for Lebesgue–almost every $\theta$ by Corollary~\ref{lem: Birkhoff genericity}. We now claim that \eqref{eq: main thm general 2} holds for such $\theta$.

    To see this, note that by Lemmas \ref{lem: Generic iff adele generic }, and \ref{lem:BG implies CG}, along with Theorem \ref{muJM}, we obtain
    \begin{align}
        \label{eq:waste w 3}
        \lim_{T \rightarrow \infty} \frac{1}{m_{\R^{k+r-1}}(J^T)} \sum_{t \in N(\Tilde\Lambda_\theta, T, \BA)} \delta_{a_t \Tilde\Lambda_\theta} = \mu_{\BA},
    \end{align}
    with respect to the weak-* topology. This implies that the number of best approximates $(p, q)$ for which the corresponding $t$ satisfies $a_t \Tilde\Lambda_\theta \in \BA \setminus \ssed$ is also of order $o(T^{k + r - 1})$, and hence can also be ignored in \eqref{eq: main thm general 2}.

    Now, push forward the measures on both sides of \eqref{eq:waste w 3} under the maps $\Tilde{\psi}$ and $\Tilde{\psi}^-$, and using Lemma \ref{lem: Cross-section correspondence} along with equation \eqref{eq: measure of J T}, we obtain
    \[
        \lim_{T \rightarrow \infty} \frac{1}{T^{k + r - 1}} \sum_{\|q\| \leq e^T} \delta_{\Theta_1(\theta, p, q)} = \frac{c_{k + r - 1}(n_1, \ldots, n_r)}{m_1 \cdots n_{r-1} \cdot (k + r - 1)!} \left( \Tilde{\psi}_* \mu_{\BA} + \Tilde{\psi}^-_* \mu_{\BA} \right).
    \]
    This proves \eqref{eq: main thm general 2} for 
    \[
        \tnu^1 = \frac{\zeta(d)}{m_1 \cdots n_{r - 1}} \left( \Tilde{\psi}_* \mu_{\BA} + \Tilde{\psi}^-_* \mu_{\BA} \right).
    \]

    Finally, we verify the stated properties of $\tnu^1$. The first property follows from Result \ref{res 7} and the fact that $\mu_{\BA}$ is the restriction of $\mu_{\seda}$ to $\BA$. The second property follows from the $K_f$-invariance of $\BA$ (by definition), $\mu_{\seda}$ (by Result \ref{res 2:measure}), and the maps $\Tilde{\psi}, \Tilde{\psi}^-$ (by Result \ref{res 7}). The third property of $\tnu^1$ follows from the fact that both $\mu_{\seda}$ and $\BA$ are invariant under the action of the group
    \begin{align}
    \label{eq: def compact group}
        \left\{ 
        \begin{pmatrix}
            O_1 \\ & \ddots \\ && O_{k + r} 
        \end{pmatrix} \;\middle|\; 
        \begin{aligned}
            &O_i \in \Ortho(m_i) \text{ for } 1 \leq i \leq k, \\
            &O_{k + j} \in \Ortho(n_j) \text{ for } 1 \leq j \leq r
        \end{aligned} 
        \right\},
    \end{align}
    and from the fact that the composition of the maps $\Tilde{\psi}, \Tilde{\psi}^-$ with the natural projection $\ZZ_1 \rightarrow \Sphere^{m_1} \times \cdots \times \Sphere^{m_k} \times \Sphere^{n_1} \times \cdots \times \Sphere^{n_r} \times \R$ is also invariant under the action of the compact group in \eqref{eq: def compact group}. This completes the proof.
\end{proof}

\begin{proof}[Proof of Theorem \ref{Main thm time visits}]
Fix a $\tnu^{1}$-JM subset $A$ of $\ZZ_1$ satisfying \eqref{eq: condition Main thm time visits}. Fix $\theta \in \Mat$ of generic type. Then, by Lemma \ref{lem:BG implies CG}, we have
\begin{align}
    \label{eq:waste w 3 1}
    \lim_{T \rightarrow \infty} \frac{N(\Tilde\Lambda_\theta, T, E)}{T} = \mu_{\BA}(E),
\end{align}
for every $\mu_{\BA}$-JM subset $E \subset \BA$.

Let $(p_l, q_l) \in \Z^m \times \Z^n$ denote the sequence of $\e$-approximations to $\theta$, ordered by increasing $\|q_l\|$, such that $\Theta_1(\theta, p_l, q_l) \in A$. Let $t_l = \log \|q_l\|$, and define $\Y_T(\theta, A) = \{t_l : t_l \leq T\}$.

By Theorem \ref{muJM}, it suffices to show that for all $j \in \N$, there exists a probability measure $\mu^{A,j}$ on $(\ZZ_1 \times \R)^j$ such that
\begin{align}
    \label{eq: main thm time visits waste}
    \lim_{T \rightarrow \infty} \frac{1}{T} \sum_{\|q_l\| \leq e^T} \delta_{\left( \Theta_1(\theta, p_{l+i}, q_{l+i}), \log \left( \frac{\|q_{l+i}\|}{\|q_{l+i-1}\|} \right) \right)_{i=0}^{j-1}} = \mu^{A,j}.
\end{align}
The existence of a measure $\mu^A$ with the desired properties then follows from the Kolmogorov extension theorem applied to the family $(\mu^{A,j})_{j \in \N}$.

To prove \eqref{eq: main thm time visits waste}, let $\Tilde{A} = \Tilde{\psi}^{-1}(A)$. Then, using Theorem \ref{muJM}, Lemma \ref{JMintersect}, and equation \eqref{eq:waste w 3 1}, we have
\begin{align}
    \label{eq:waste w 3 2}
    \lim_{T \rightarrow \infty} \frac{1}{T} \sum_{t \in N(\Tilde\Lambda_\theta, T, \Tilde{A})} \delta_{a_t \Tilde\Lambda_\theta} = \mu_{\BA}|_{\Tilde{A}}.
\end{align}

Arguing as in the proof of Lemma \ref{lem: Cross-section correspondence}, we see that for all $T$, 
\begin{equation*}
    N(\Tilde\Lambda_\theta, T, \Tilde{A}) \subset \Y_T(\theta, A), \quad
    \Y_T(\theta, A) \setminus N(\Tilde\Lambda_\theta, T, \Tilde{A}) \subset N(\Tilde\Lambda_\theta, T, \BA \cap \sseda).
\end{equation*}
Moreover, using \eqref{eq:waste w 3 1}, we get
\[
\#N(\Tilde\Lambda_\theta, T, \BA \cap \sseda) = o(T).
\]
Therefore, we may restrict the sum in \eqref{eq: main thm time visits waste} to those $(p_l, q_l)$ for which
\begin{align}
    \label{eq: abced 1}
    \{t_l, \ldots, t_{l+j-1}\} \cap N(\Tilde\Lambda_\theta, T, \BA \cap \sseda) = \emptyset.
\end{align}

For each such $(p_l, q_l)$, we have
\begin{align}
    \label{eq:temp11}
    \tau_A\left( T_A^s(a_{t_l} \Tilde\Lambda_\theta) \right) = \log \|q_{l+s+1}\| - \log \|q_{l+s}\|,
\end{align}
for all $0 \leq s \leq j-1$. The maps $\tau_{\Tilde{A}}, T_{\Tilde{A}}^1, \ldots, T_{\Tilde{A}}^{j-1}$ (Result \ref{res 9}) and $\Tilde{\psi}, \Tilde{\psi}^{-1}$ (Result \ref{res 7}) are continuous $\mu_{\sseda}|_{\Tilde{A}}$-almost everywhere. Therefore, pushing forward both sides of \eqref{eq:waste w 3 2} under the map
\[
\left( (\Tilde{\psi}, \tau_{\Tilde{A}}), \ldots, (\Tilde{\psi}, \tau_{\Tilde{A}}) \circ T_{\Tilde{A}}^{j-1} \right),
\]
and using \eqref{eq:temp11} and Lemma \ref{lem: Cross-section correspondence}, we obtain the desired convergence in \eqref{eq: main thm time visits waste}. This completes the first part of the proof.

For the second part, note that the pushforward measure $\mu^{A*}$ in the theorem equals $\mu^{A,1}$ as defined above. The first property of $\mu^{A,1}$ then follows directly. For the second property, observe that the expectation of the pushforward of $\mu^{A*}$ under the projection map $\ZZ_1 \times \R \rightarrow \R$ equals
\[
\int_{\Tilde{A}} \tau_A(x) \, d\mu_{\sseda}|_{\Tilde{A}},
\]
which equals $1$ by the Kac formula \eqref{Kac formula}. Hence, the theorem follows.
\end{proof}

\begin{proof}[Proof of Corollary \ref{thm:intro 1}]
Note that using Remark \ref{rem: Generic has full measure}, it is enough to show that the limits in \eqref{eq: thm:intro 1 1}, \eqref{eq: thm:intro 1 2}, and \eqref{eq: thm:intro 1 3} exist for every $\theta$ of generic type, and are independent of $\theta$.

\smallskip

Equation \eqref{eq: thm:intro 1 1} follows from Theorem \ref{Main thm time visits} in the same way that Corollary \ref{thm: cor 3 to main thm} follows from Theorem \ref{main thm best}.

\smallskip

For equation \eqref{eq: thm:intro 1 3}, fix $s \in \Z_{\geq 0}$. If $s = 0$, then \eqref{eq: thm:intro 1 3} amounts to the study of the distribution of the sequence $\{\disp(\theta, p_l, q_l) : l \in \N\}$, which follows directly from Theorem \ref{Main thm time visits}. Hence, we may assume that $s > 0$. Consider the continuous map
\[
F: (\ZZ_1 \times \R)^s \rightarrow \R, \quad (\Lambda_i, w_i, \delta_i, v_i, \gamma_i )_{i=1}^s \longmapsto e^{\gamma_1 + \cdots + \gamma_s} \delta_i,
\]
where $\Lambda_i \in \E_d^1$, $w_i \in \Sphere^{m_1} \times \cdots \times \Sphere^{n_r}$, $\delta_i \in [0,\epsilon]$, $v_i \in \hZp^d$, and $\gamma_i \in \R$ for all $1 \leq i \leq s$. Then, using Theorem \ref{muJM}, the equation \eqref{eq: thm:intro 1 3} follows by pushing forward both sides of equation \eqref{eq: main thm time visits waste} by $F$.

\smallskip

To prove \eqref{eq: thm:intro 1 2}, note that the arguments used so far do not directly yield the result. However, the strategy developed above—the use of cross-sections—can be adapted to prove it.

More precisely, we need to redefine the cross-section to suit the quantity of interest. Let
\[
\Le = \{(x, y) \in \Sphere^m \times \R^n : \|y\|^n \leq \epsilon\},
\]
and consider the new cross-section $\seda = \pi^{-1}(\X(\Le))$. With this choice, the analogues of the results in Sections \ref{sec: Results borrowed}, \ref{sec: Properties of the Cross-section}, and \ref{sec: Cross-section Genericity} still hold.

The main difference lies in the analogue of Lemma \ref{lem: Cross-section correspondence}. In this setting, we obtain a correspondence between best approximates (considered up to a sign) with $\|p + \theta q\| \geq e^{-T}$ and points in the set
\[
N(\tilde{\Lambda}_\theta, T, \BA) = \{t \in [0, T] : a_t \tilde{\Lambda}_\theta \in \BA\}.
\]
This correspondence leads to the identity
\[
\lim_{l \to \infty} \frac{1}{l} \log \|p_l + \theta q_l\| = \left( \lim_{T \to \infty} \frac{N(\tilde{\Lambda}_\theta, T, \Tilde{A}')}{-T} \right)^{-1},
\]
for suitable $ \Tilde{A}'$ depending on $A$, and since the limit on the right exists for every generic $\theta$, it follows that \eqref{eq: thm:intro 1 2} holds.

\smallskip

To prove \eqref{eq: thm:intro 1 4}, note that, since the limits in \eqref{eq: thm:intro 1 1} and \eqref{eq: thm:intro 1 2} exist, it suffices to show that
\begin{align}
\label{eq: disp waste}
   n \lim_{j \rightarrow \infty} \frac{\log \|q_{l_j}\|}{l_j}  + m \lim_{j \rightarrow \infty} \frac{\log\|p_{l_j} +\theta q_{l_j}\|}{l_j}= \lim_{j \rightarrow \infty} \frac{\log \disp(\theta,p_{l_j}, q_{l_j})}{ l_j}  =0,
\end{align}
for some subsequence $(l_j)$. The equation \eqref{eq: disp waste} follows if we can show that there exist constants $0 < \alpha < \beta < \infty$ such that $\disp(\theta, p_l, q_l) \in [\alpha, \beta]$ for infinitely many $l$.

This in turn follows from Theorem \ref{main thm best}, which implies that the error terms $\disp(\theta, p_l, q_l) $ equidistribute with respect to a measure absolutely continuous with respect to Lebesgue measure. Hence, we can choose $0 < \alpha < \beta < \infty$ such that $[\alpha, \beta]$ has positive measure under the limiting distribution, proving \eqref{eq: thm:intro 1 4}.

\smallskip

This completes the proof of the corollary.
\end{proof}

\begin{proof}[Proof of Corollary \ref{cor: determinant}]
Let us define a function $F: (\ZZ_1 \times \R)^{d} \rightarrow \R$ by
\[
    (\Lambda_i, x_i, y_i, \delta_i, v_i, \gamma_i )_{i=1}^d \longmapsto \det \begin{pmatrix}
        \delta_1^{1/m} x_1 & e^{-\tfrac{n}{m} \gamma_1} \delta_2^{1/m} x_2 & \cdots & e^{- \tfrac{n}{m} \left(\gamma_1 + \cdots+ \gamma_{d-1} \right)} \delta_d^{1/m} x_d \\
        y_1 & e^{\gamma_1}y_2 & \cdots & e^{\gamma_1 + \cdots+ \gamma_{d-1}}y_d
    \end{pmatrix}
\]
where $\Lambda_i \in \E_d^1$, $x_i \in \Sphere^{m}$, $y_i \in \Sphere^n$, $\delta_i \in [0,\varepsilon]$, $v_i \in \hZp^d$, and $\gamma_i \in \R$ for all $1 \leq i \leq d$.

Then for all $l$, observe that
\begin{align*}
    \det \begin{pmatrix}
        p_{l} & \cdots & p_{l+m-1} \\
        q_l & \cdots & q_{l+m-1}
    \end{pmatrix}
    &= \det \begin{pmatrix}
        p_{l} + \theta q_l & \cdots & p_{l+m-1} + \theta q_{l+m-1} \\
        q_l & \cdots & q_{l+m-1}
    \end{pmatrix} \\
    &= \det \begin{pmatrix}
        \|q_l\|^{n/m} p_{l} + \theta q_l & \cdots & \|q_l\|^{n/m} p_{l+m-1} + \theta q_{l+m-1} \\
        \|q_l\|^{-1} q_l & \cdots & \|q_l\|^{-1} q_{l+m-1}
    \end{pmatrix} \\
    &= F\left( \left(\Theta_1(\theta,p_{l+j}, q_{l+j}), \log\frac{\|q_{l+j+1}\|}{\|q_{l+j}\|} \right)_{j=0}^{d-1} \right)
\end{align*}

Therefore, using Theorem \ref{muJM}, equation \eqref{eq: cor: determinant} follows by pushing forward both sides of \eqref{eq: main thm time visits waste} for $s = d$, and taking $\mu^{A,\Z} = F_*(\mu^{A,j})$. This completes the proof.
\end{proof}


\appendix

\section{Comparison and Heuristic}
\label{appendix:best-approximation-remarks}

This brief section addresses two common queries raised by readers. The first concerns the distinction between Definition~\ref{def:best approx k=m, n=1} and Definition~\ref{def: best k=r=1} for the $\ell^\infty$-norm. The second concerns the heuristic behind Cheung’s conjecture—specifically, why the expression $(\log q_\ell)^m/\ell$ appears in one setting~\eqref{eq:Cheung Conjecture}, but only $\log q_\ell/\ell$ in the other~\eqref{eq: CC19 1}. For ease of explanation, throughout this section, we assume that $m=2$ and $n=1$.

\subsection*{Comparison of Definitions}

We first explain the difference between Definition~\ref{def:best approx k=m, n=1} and Definition~\ref{def: best k=r=1} in the case of the $\ell^\infty$-norm. To see the distinction, note that a vector $(p,q)$ is a best approximation of $\theta \in \mathbb{R}^2$ in the sense of Definition~\ref{def: best k=r=1} (Cheung--Chevallier) if, for all integers $q' < q$ and all $p' \in \mathbb{Z}^2$, we have
\begin{align}
    \label{eq: xa 1}
    \max\{ |p_1 + q \theta_1| , |p_2 + q \theta_2| \} < \max \{ |p_1'+ q'\theta_1|, |p_2'+ q'\theta_2| \}.
\end{align}
In contrast, $(p,q)$ is a best approximation of $\theta$ in the sense of Definition~\ref{def:best approx k=m, n=1} if for all $q' < q$ and all $p' \in \mathbb{Z}^2$, we have
\begin{align}
\label{eq: xa 2}
    \text{either} \quad |p_1 + q \theta_1| <  |p_1'+ q'\theta_1| \quad \text{or} \quad |p_2 + q \theta_2| <  |p_2'+ q'\theta_2|,
\end{align}
and
\begin{align}
    \label{eq: xa 3}
    \max\{|p_1+ q\theta_1|, |p_2+ q\theta_2|\} \leq \tfrac{1}{2}.
\end{align}

It is evident that equation~\eqref{eq: xa 1} implies both \eqref{eq: xa 2} and \eqref{eq: xa 3}. Thus, any best approximation in the sense of Definition~\ref{def: best k=r=1} is also a best approximation in the sense of Definition~\ref{def:best approx k=m, n=1}. However, the converse does not hold.

Indeed, \eqref{eq: xa 2} only requires that for each $q' < q$, the point $q\theta + p$ is closer to \emph{at least one} coordinate axis than $q'\theta + p'$. This axis may vary with $q'$. That is, $p+ q\theta$ may be closer to the $x$-axis than some earlier approximation, and closer to the $y$-axis than another. In contrast, \eqref{eq: xa 1} demands that $p+ q\theta$ be uniformly closer to the origin in the $\ell^\infty$-norm than every earlier approximation. So \eqref{eq: xa 2} allows $(p,q)$ to be considered a best approximation even if it is not strictly better in all directions at once.

To illustrate this distinction, consider $\theta = (\tfrac{1}{5}, \tfrac{1}{7})$ and let $p = (-1, -1)$, $q = 5$. Then
\begin{align*}
    |p_1 + q \theta_1| = |-1 + 5 \cdot \tfrac{1}{5}| = 0,
\end{align*}
so for all $q' < 5$ and all $p' \in \mathbb{Z}^2$, we have
\begin{align*}
    |p_1 + q \theta_1| = 0 < |p_1' + q' \theta_1|,
\end{align*}
showing that $(p,q)$ satisfies Definition~\ref{def:best approx k=m, n=1}. However, it does not satisfy Definition~\ref{def: best k=r=1}, since
\begin{align*}
    \max\left\{ |-1 + 5 \cdot \tfrac{1}{5}| , |-1 + 5 \cdot \tfrac{1}{7}| \right\} 
    = \max\left\{ 0, \tfrac{2}{7} \right\} = \tfrac{2}{7} 
    > \max\left\{ \tfrac{1}{5}, \tfrac{1}{7} \right\} = \tfrac{1}{5}.
\end{align*}

This example demonstrates that the condition in Definition~\ref{def:best approx k=m, n=1} is strictly weaker than that in Definition~\ref{def: best k=r=1}.

\subsection*{Geometric Interpretation}
A second way to visualize the difference is via the geometric interpretation discussed in Remark~\ref{rem: difference}. To assist the reader, we include a top-down view of the corresponding regions in the figure below.

\begin{figure}[H]
    \centering
    \begin{minipage}{0.45\textwidth}
        \centering
        \includegraphics[width=0.9\textwidth]{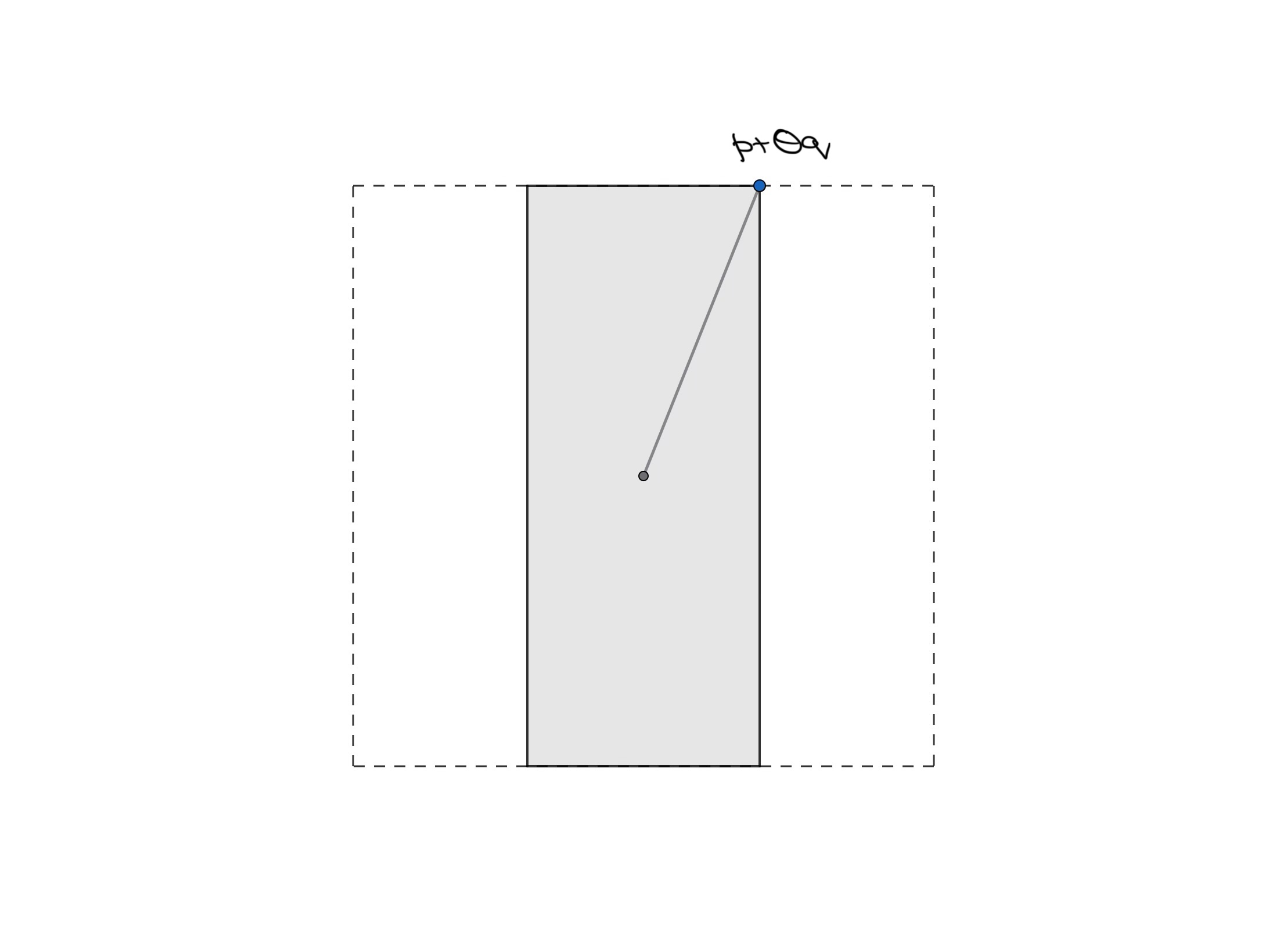}
        \textit{Definition \ref{def:best approx k=m, n=1}}
    \end{minipage}
    \hfill
    \begin{minipage}{0.45\textwidth}
        \centering
        \includegraphics[width=0.9\textwidth]{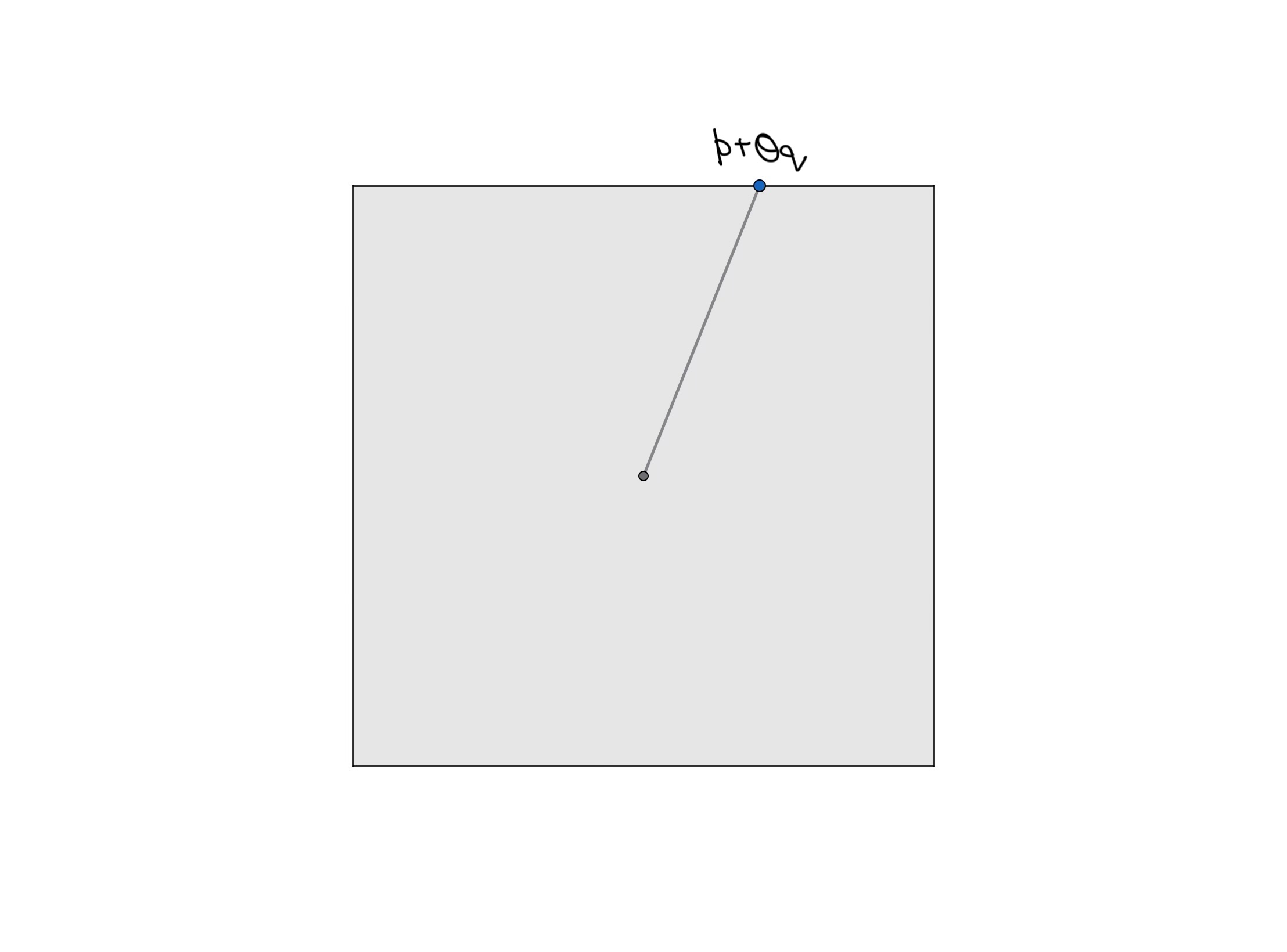}
        \textit{Definition \ref{def: best k=r=1}}
    \end{minipage}
    \caption{Visual comparison between the top view of cylinder (Cheung--Chevallier) and cuboid (Cheung's conjecture) conditions}
    \label{fig:geometric_interpretation_3}
\end{figure}

Since the rectangle with $p + q \theta$ as one of its vertices (used in the Cheung's conjecture setting) is always contained within the square centered at the same point (used in the Cheung--Chevallier setting), this again confirms that the condition in Definition~\ref{def: best k=r=1} is stronger.

\subsection*{Heuristic for Growth Rates in Cheung’s Conjecture}

The asymmetry between Definitions~\ref{def:best approx k=m, n=1} and~\ref{def: best k=r=1} has important implications for the growth of denominators in sequences of best approximations. Since the condition in Definition~\ref{def:best approx k=m, n=1} is weaker, it is typically easier to find the next best approximant $q_{\ell+1}$ once $q_1, \ldots, q_\ell$ are known. In contrast, the stricter condition in Definition~\ref{def: best k=r=1} makes such approximants rarer—and therefore, $q_{\ell+1}$ tends to be significantly larger.

This rough intuition accounts for the difference in growth rates:
\begin{align*}
    \log q_\ell &\sim \ell^{1/2} \quad \text{under Definition~\ref{def:best approx k=m, n=1},} \\
    \log q_\ell &\sim \ell \quad \text{under Definition~\ref{def: best k=r=1}.}
\end{align*}
This, in turn, explains why expressions like $(\log q_\ell)^m / \ell$ and $\log q_\ell / \ell$ naturally arise in the respective formulations of Cheung’s conjecture.

\vspace{1em}
We now provide a more detailed geometric explanation. Suppose that $(p_1,q_1), \ldots, (p_4,q_4)$ are best approximants of $\theta \in \R^2$ in the sense of Definition~\ref{def: best k=r=1} for the $\ell^\infty$ norm. To find the next approximant $(p,q)$, one must ensure that:
\begin{align*}
    \|p_i + \theta q_i\|_{\ell^\infty} > \|p + \theta q\|_{\ell^\infty} \quad \text{for each } i = 1,\dots,4.
\end{align*}
Geometrically, these inequalities define nested exclusion regions in the torus $\T^2 \simeq (-1/2, 1/2]^2$. The point $q\theta \mod 1$ must lie in the intersection of these allowed regions—illustrated in the series of diagrams below.

\begin{figure}[H]
    \centering
    \includegraphics[width=0.3\textwidth]{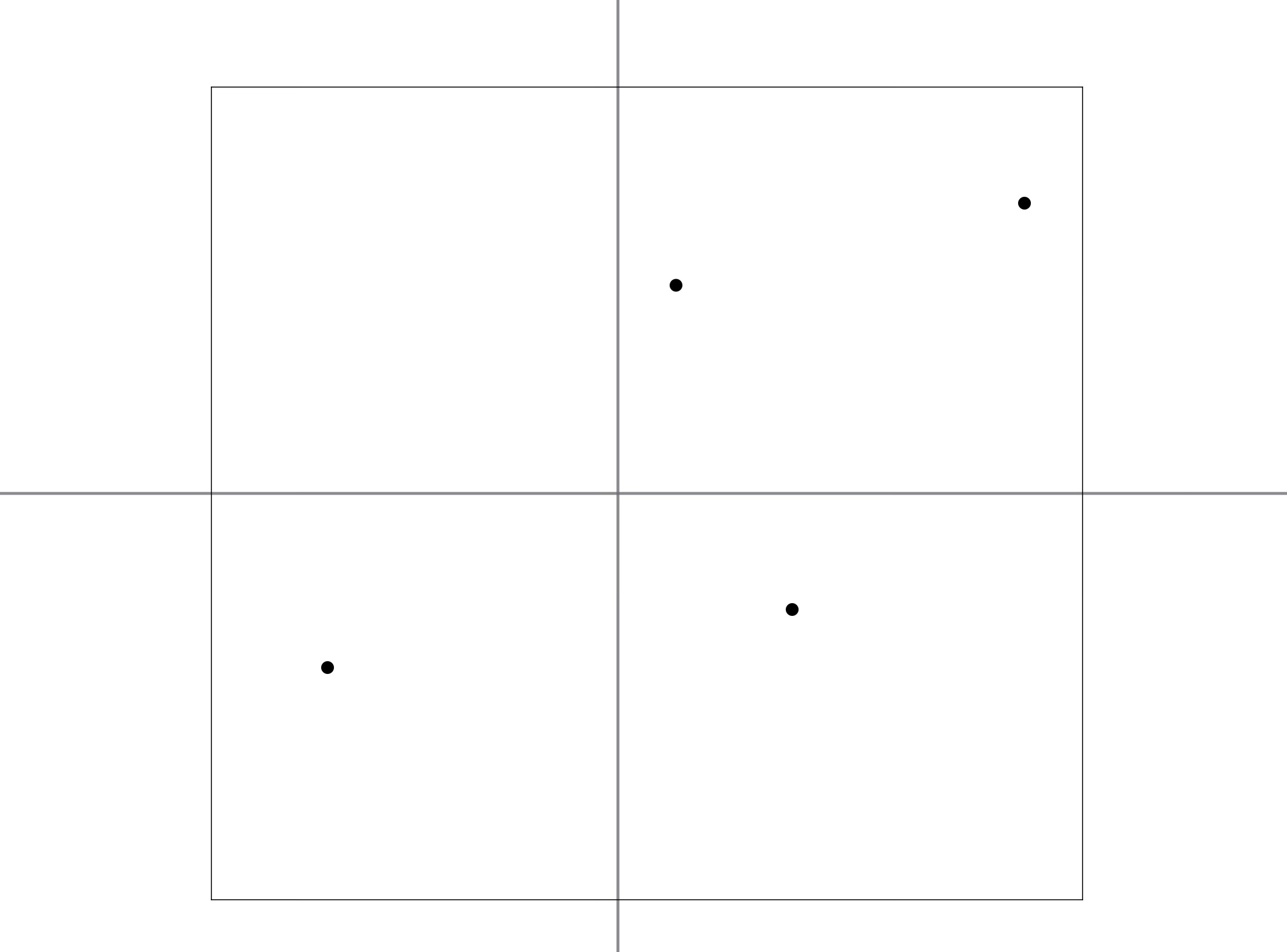}
    \caption{Initial four best approximants: $q_i \theta$ shown in the torus.}
\end{figure}

\begin{figure}[H]
    \centering
    \includegraphics[width=0.3\textwidth]{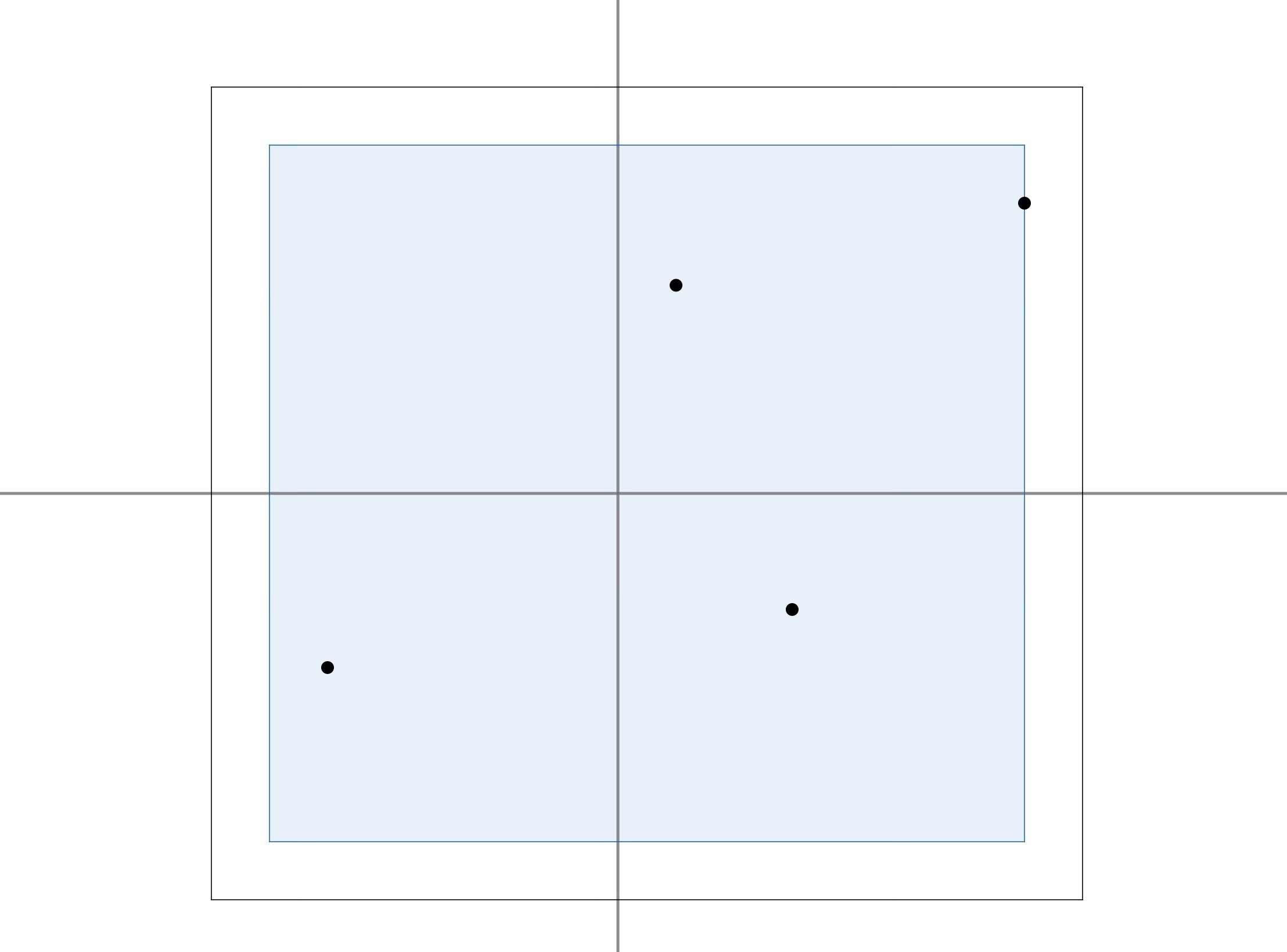}
    \caption{Region (blue) allowed by the first inequality.}
\end{figure}

\begin{figure}[H]
    \centering
    \includegraphics[width=0.3\textwidth]{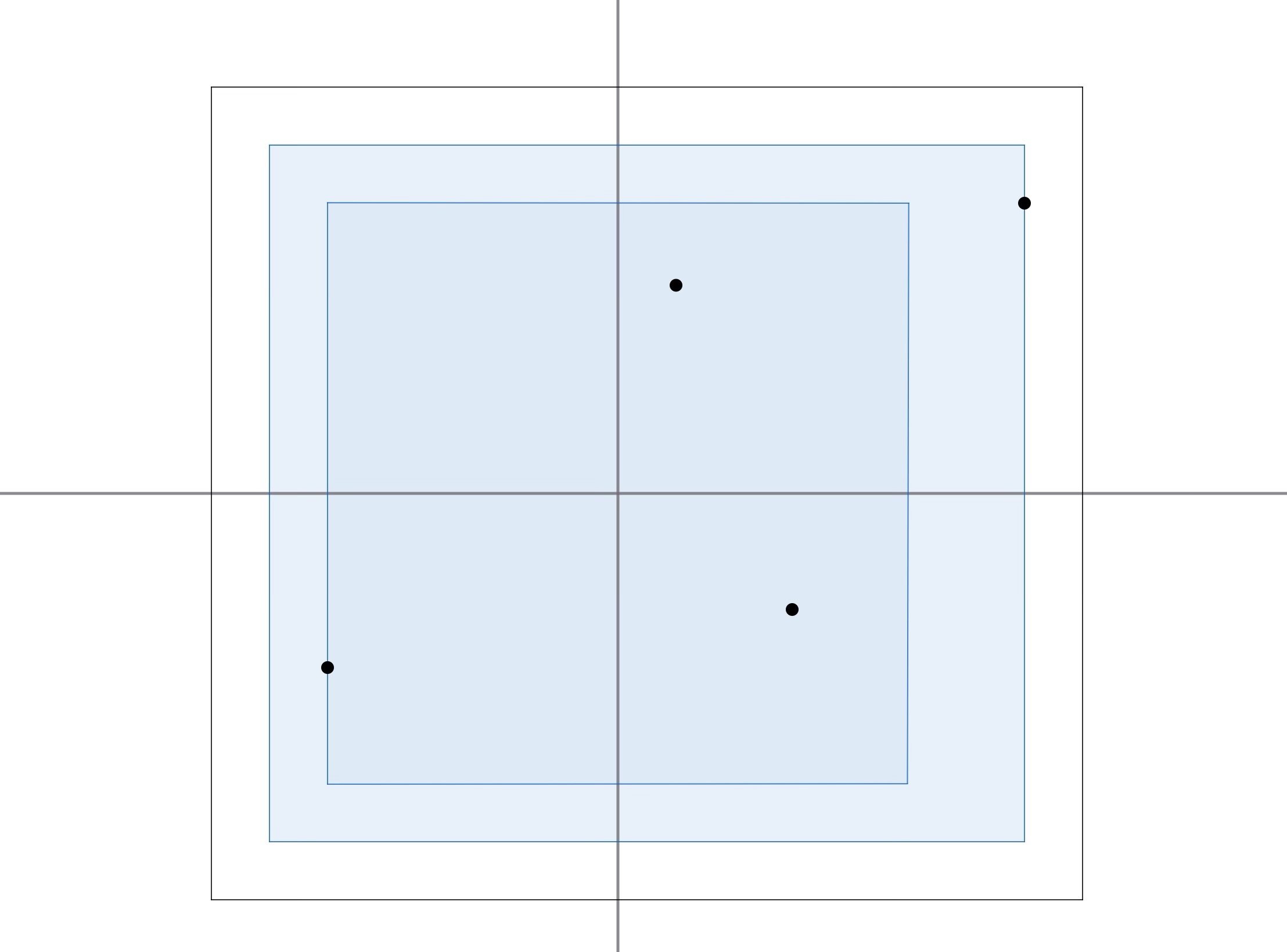}
    \caption{Refined region (blue) from the second inequality.}
\end{figure}

\begin{figure}[H]
    \centering
    \includegraphics[width=0.3\textwidth]{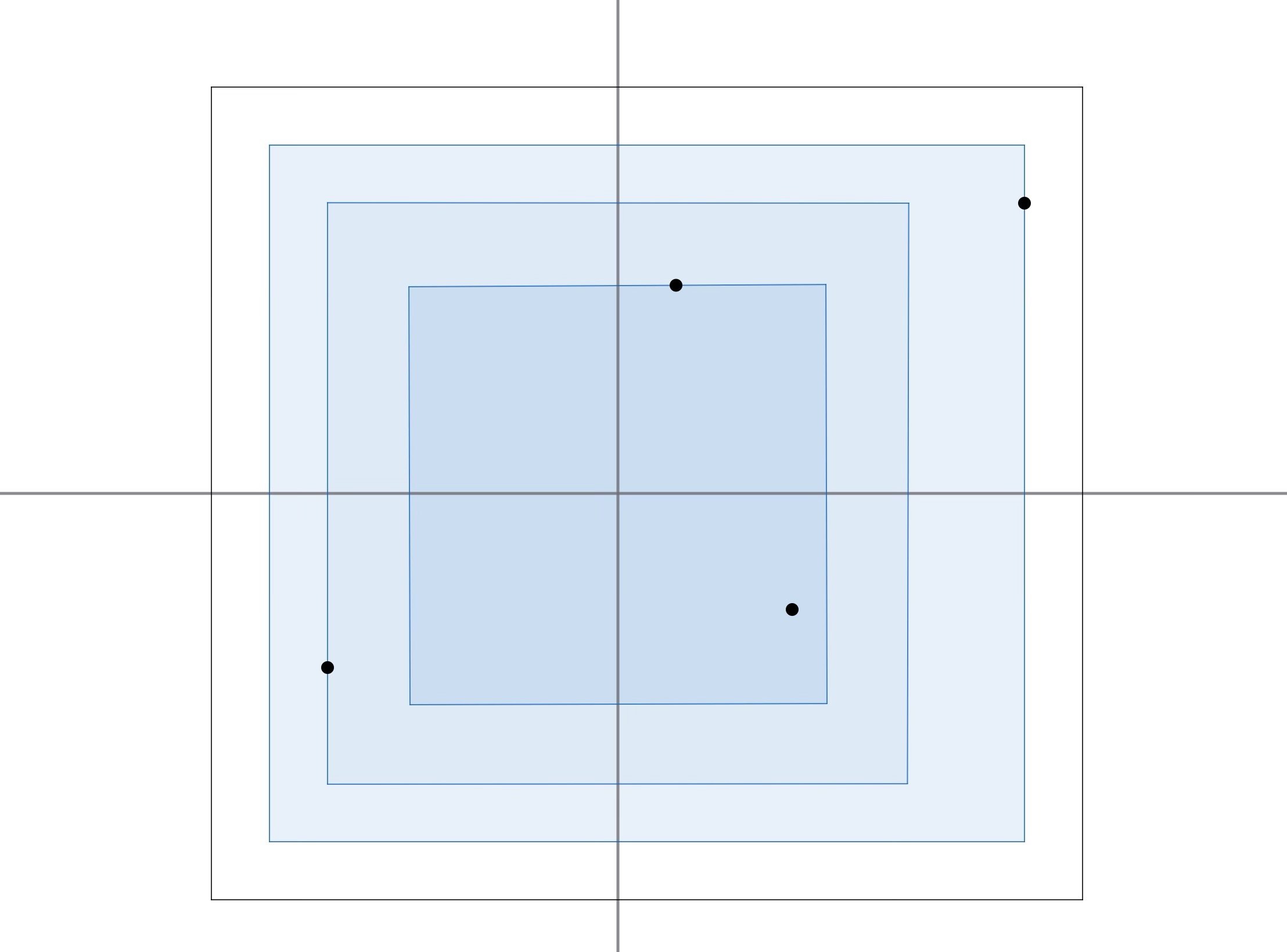}
    \caption{Further refinement after three conditions.}
\end{figure}

\begin{figure}[H]
    \centering
    \includegraphics[width=0.3\textwidth]{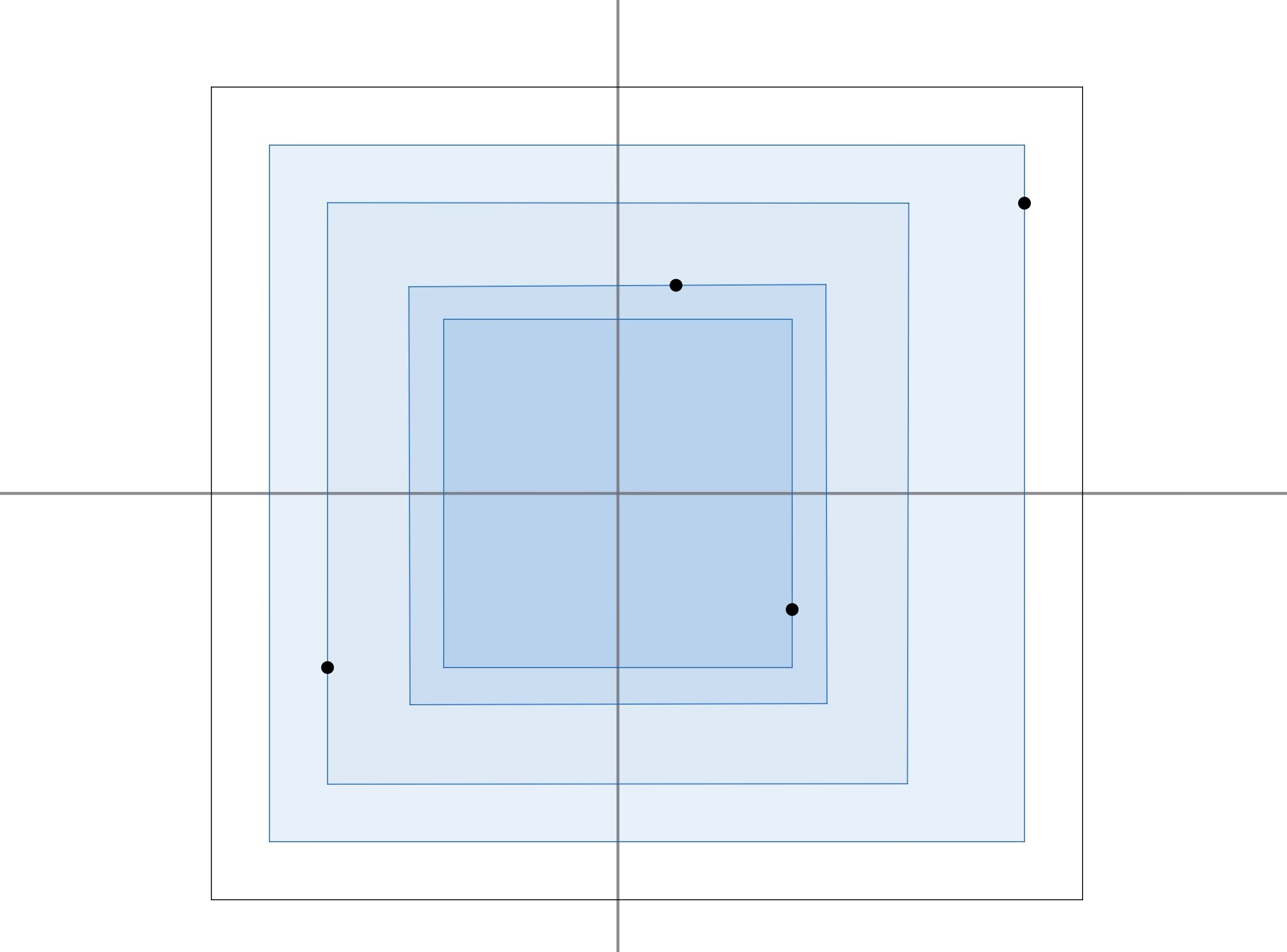}
    \caption{Final feasible region (blue) after all four inequalities.}
\end{figure}

In this setting, finding $(p, q)$ is akin to solving a simultaneous Diophantine inequality, i.e., locating points of the orbit $\{ q\theta + \Z^2 : q \in \N \}  \subset \T^2$ in progressively shrinking rectangles. By Schmidt's counting results, the number of best approximants with $\|q\| \leq e^T$ therefore grows like $T$. Thus, the discussion in Section~\ref{subsec: Generalized Levy-Khintchine Theorem} leads to the heuristic $q_\ell \sim \ell$. This explains the linear growth in $\log q_\ell$.

\vspace{1em}
Now consider the same task under Definition~\ref{def:best approx k=m, n=1}, where the previous approximants are $(p_1,q_1), \ldots, (p_5,q_5)$. The condition for a new approximant $(p,q)$ becomes: for each $i$, the point $q\theta \mod 1$ must avoid a certain blue region in the torus. 

These blue regions correspond to points that would violate the condition
\[
|(p_i)_j + q_i\theta_j| > |x_j| \quad \text{for some } j,
\]
or in other words satisfy
\[
|(p_i)_j + q_i\theta_j| \leq  |x_j| \quad \text{for both } j.
\]
That is, if $q\theta$ enters the blue zone, then there exists a smaller denominator $q_i$ giving a better approximation in both coordinate, making $(p, q)$ inadmissible. These exclusion zones accumulate with each new approximant.

The diagrams below visualize this behavior:

\begin{figure}[H]
    \centering
    \includegraphics[width=0.3\textwidth]{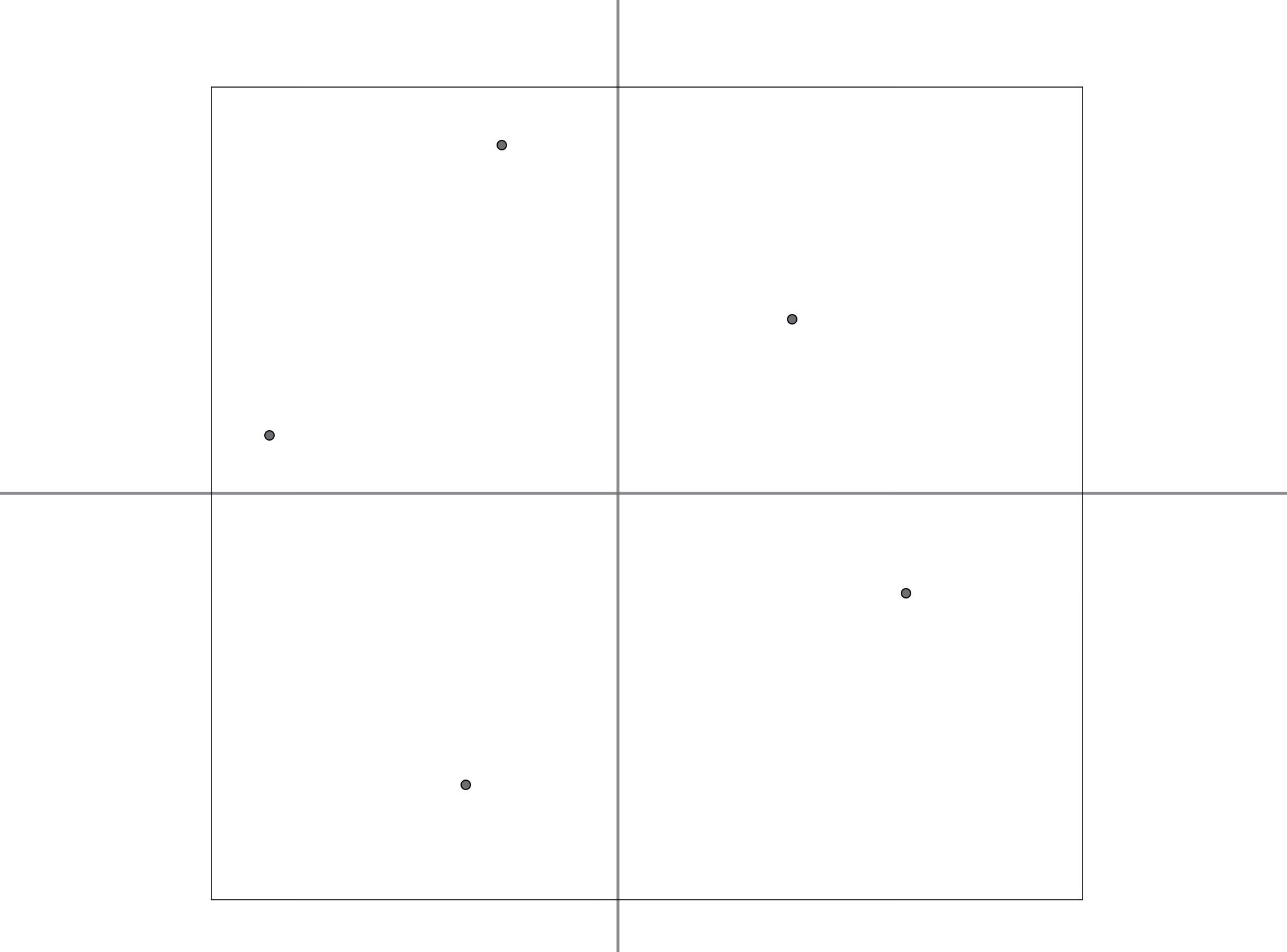}
    \caption{Initial five best approximants: $q_i \theta$ shown.}
\end{figure}

\begin{figure}[H]
    \centering
    \includegraphics[width=0.3\textwidth]{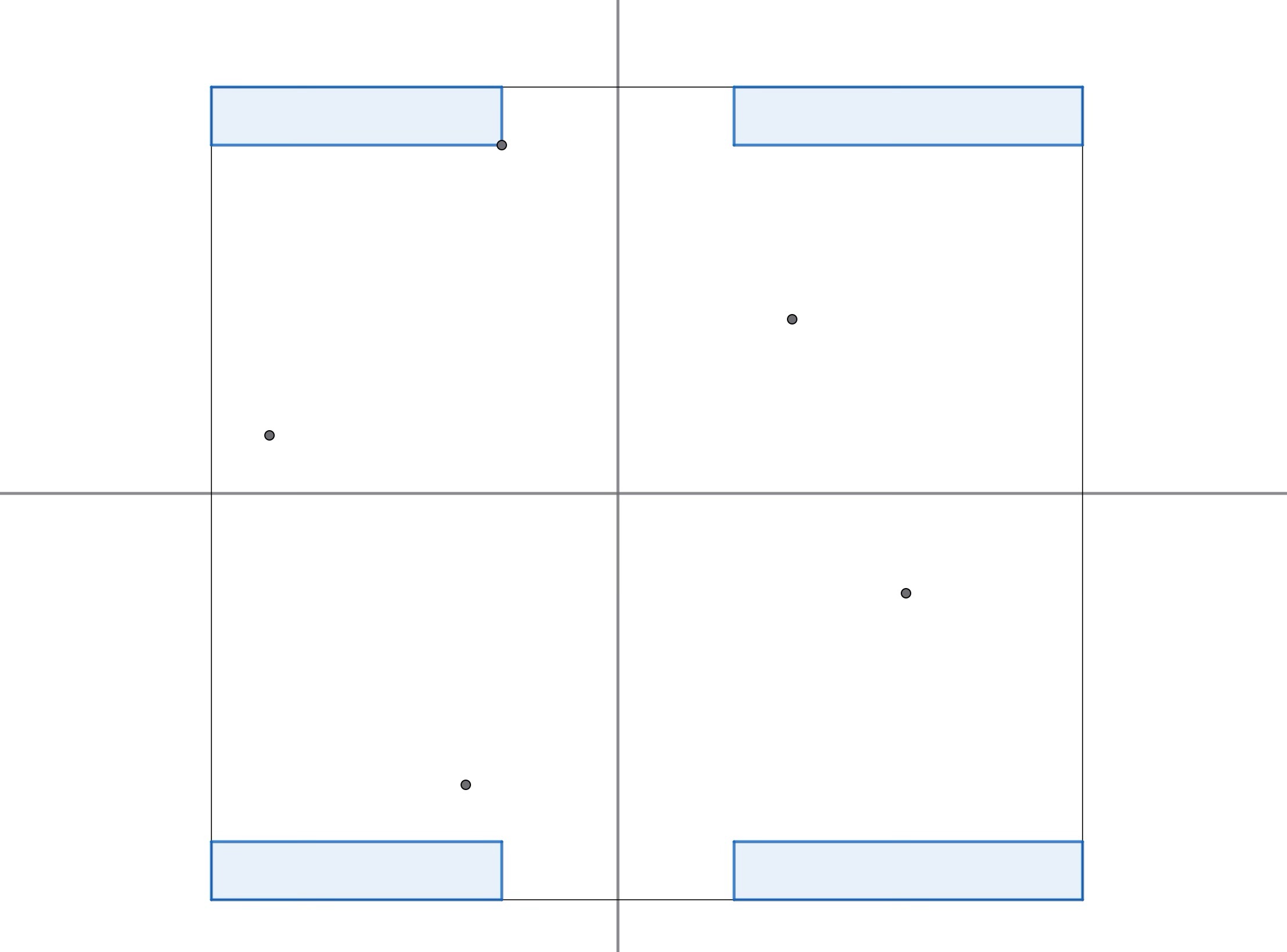}
    \caption{Forbidden region for $(p_1, q_1)$.}
\end{figure}

\begin{figure}[H]
    \centering
    \includegraphics[width=0.3\textwidth]{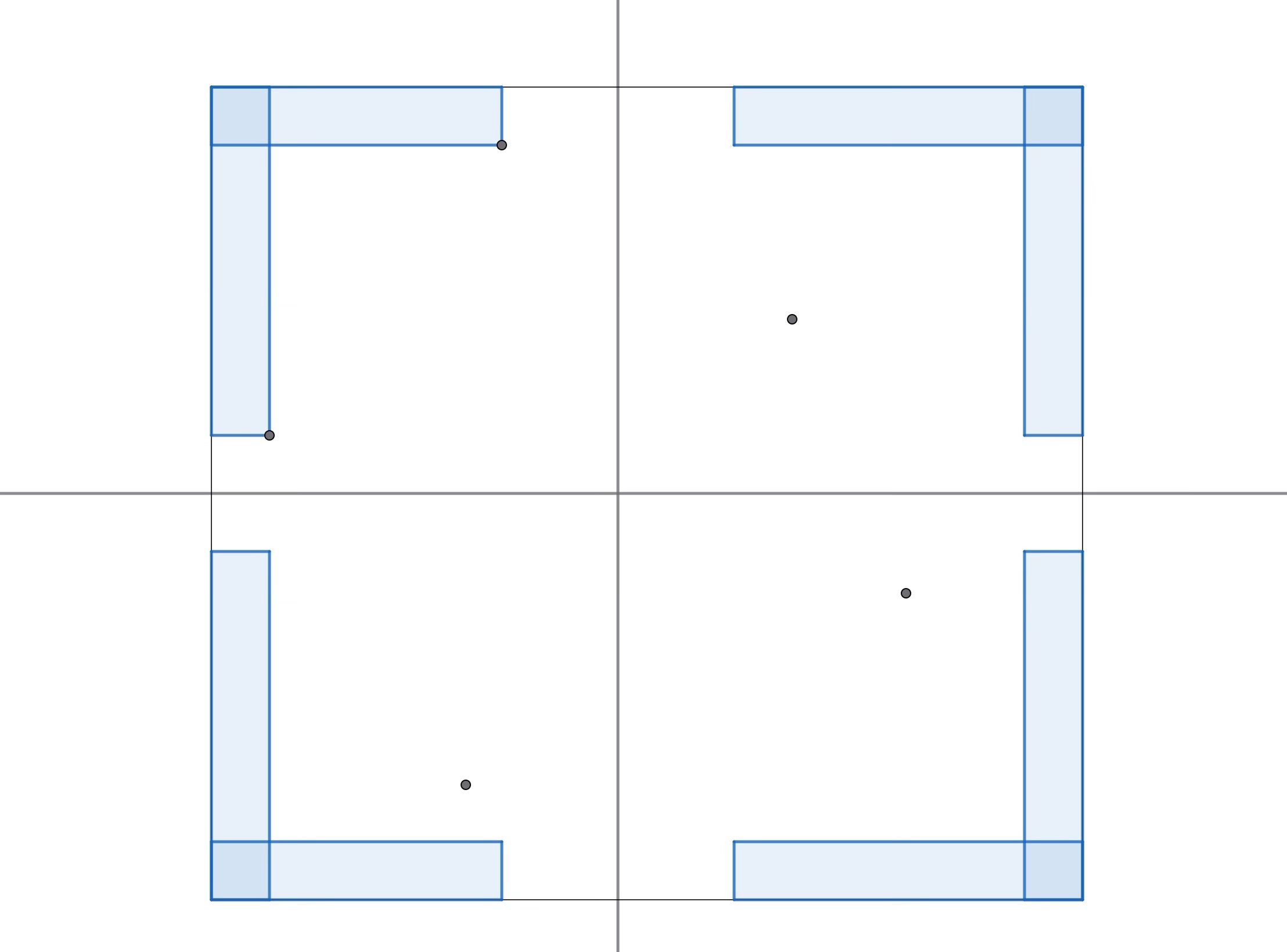}
    \caption{Forbidden region for $(p_2, q_2)$.}
\end{figure}

\begin{figure}[H]
    \centering
    \includegraphics[width=0.3\textwidth]{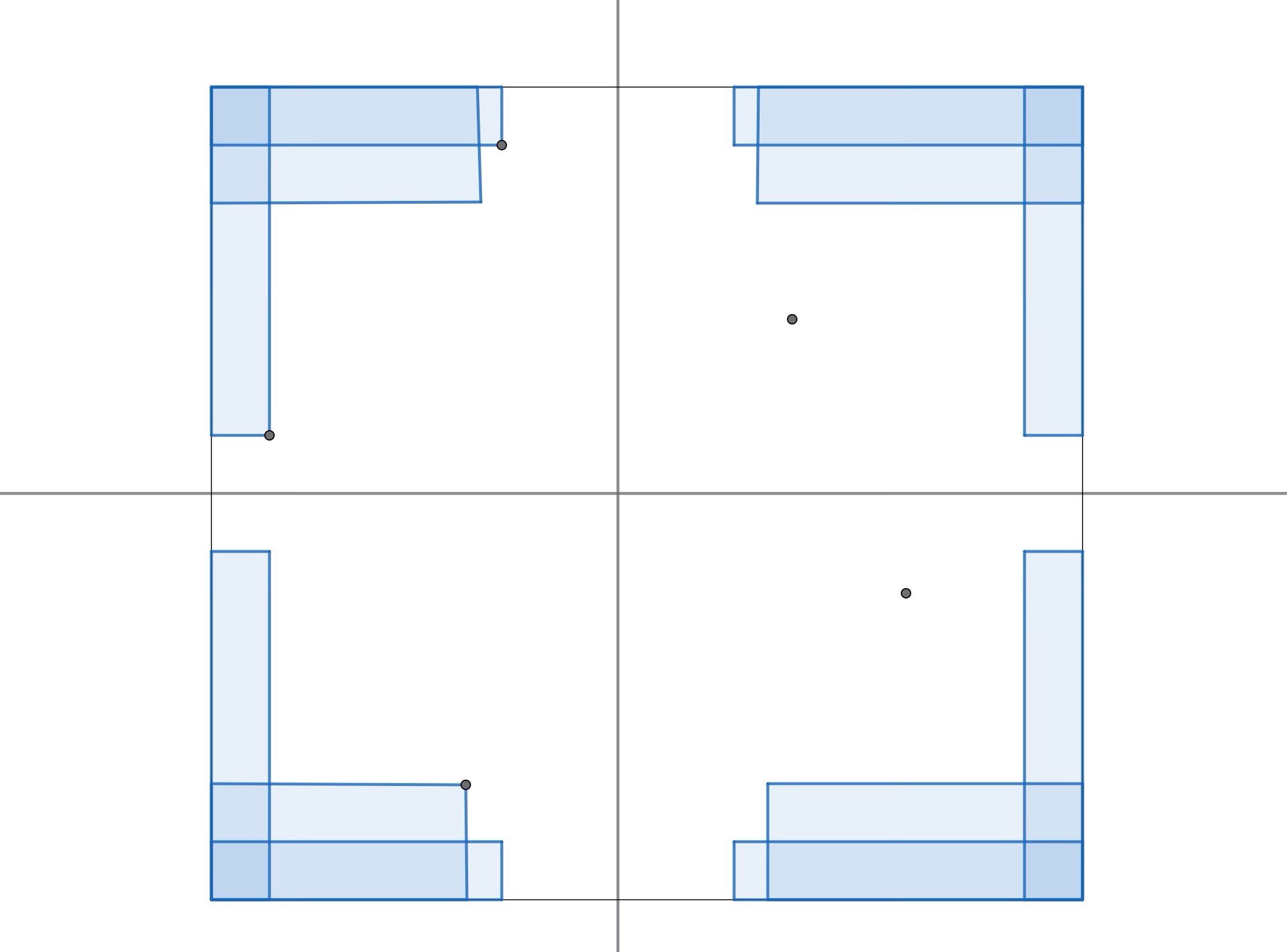}
    \caption{Continuing exclusion: $(p_3, q_3)$.}
\end{figure}

\begin{figure}[H]
    \centering
    \includegraphics[width=0.3\textwidth]{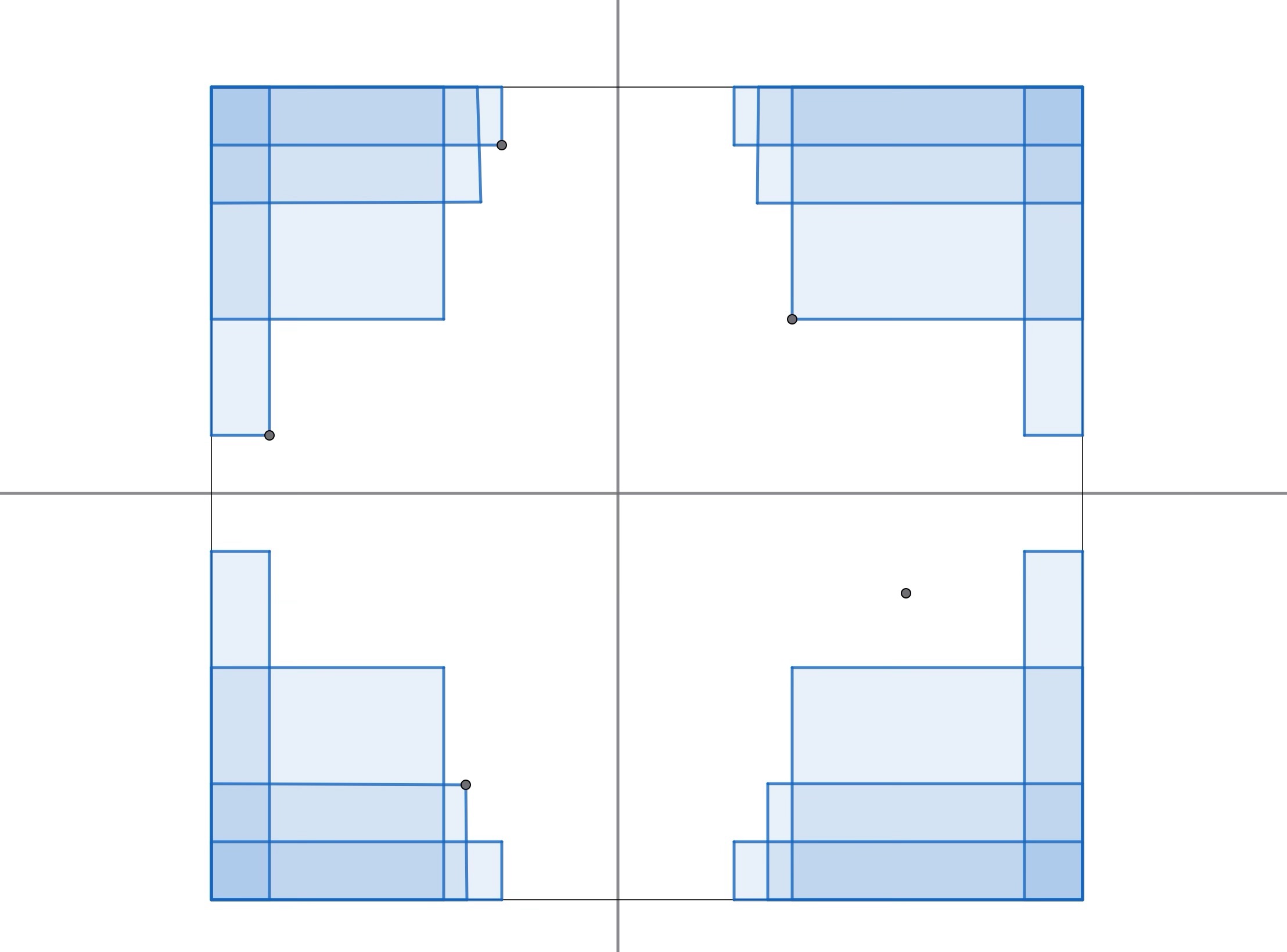}
    \caption{For $(p_4, q_4)$.}
\end{figure}

\begin{figure}[H]
    \centering
    \includegraphics[width=0.3\textwidth]{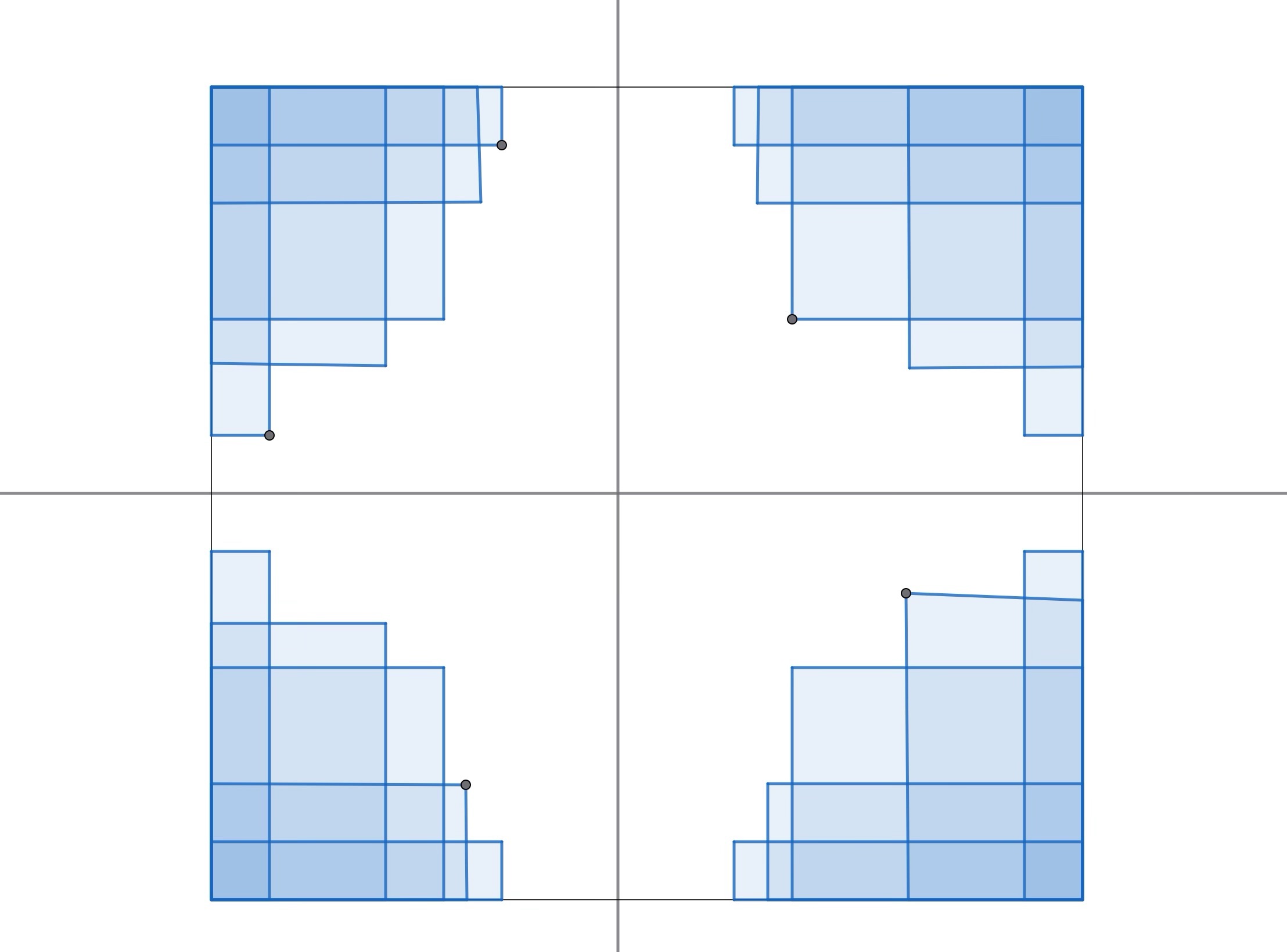}
    \caption{For $(p_5, q_5)$.}
\end{figure}

With many such conditions, the admissible region becomes hyperbolic in shape—roughly described by
\[
|x||y| \leq \varepsilon.
\]
This corresponds to solving a multiplicative Diophantine inequality:
\[
q \cdot |q\theta + p_1| \cdot |q\theta + p_2| < \varepsilon.
\]

\begin{figure}[H]
    \centering
    \includegraphics[width=0.3\textwidth]{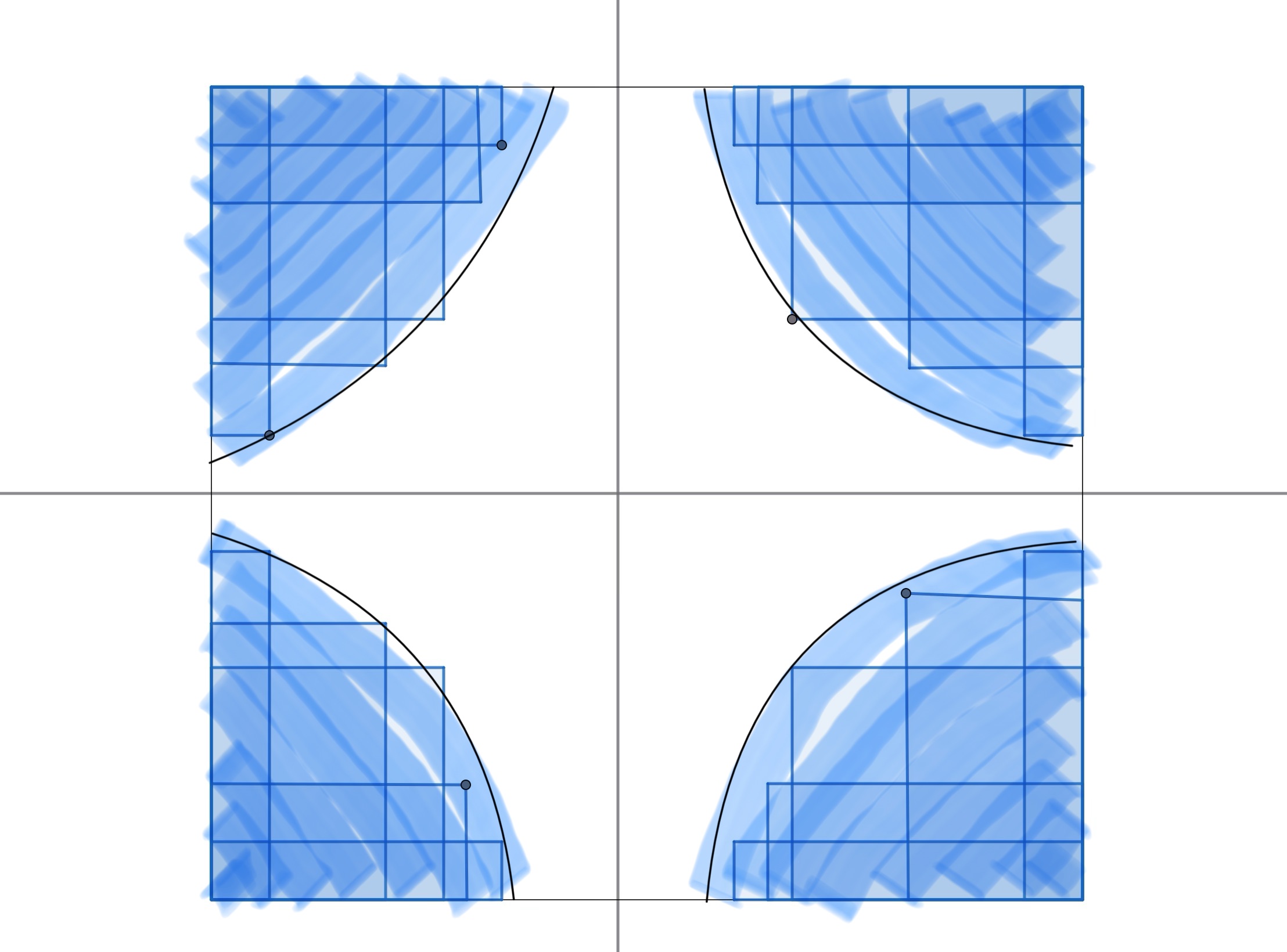}
    \caption{Limiting shape: a hyperbola-like region.}
\end{figure}

Since multiplicative approximations are more permissive, we expect to find solutions faster: if simultaneous approximations appear at scale $q \sim T$, multiplicative ones should emerge at scale $q \sim T^{1/2}$. Consequently, $\log q_\ell$ grows like $\ell^{1/2}$ in this regime.

\vspace{1em}
In summary, this explains—at least heuristically—why:
- Definition~\ref{def: best k=r=1} leads to linear growth: $\log q_\ell \sim \ell$,
- Definition~\ref{def:best approx k=m, n=1} leads to sublinear growth: $\log q_\ell \sim \ell^{1/2}$, and hence, why the respective forms of L\'{e}vy-Khintchine theorem involve $(\log q_\ell)^m/\ell$ and $\log q_\ell/\ell$. Though non-rigorous, these geometric and counting-based insights help illuminate the origin of the different growth behaviors.

\bibliography{Biblio}
\end{document}